\documentclass[11pt, reqno]{amsart}

\usepackage{amsmath, amsthm, amscd, amsfonts, amssymb, graphicx, color, mathtools, mathrsfs}
\usepackage[bookmarksnumbered, colorlinks, plainpages]{hyperref}
\usepackage{enumerate}
\usepackage{setspace}
\usepackage{multicol}
\usepackage[margin=1in]{geometry}
\usepackage{comment}
\usepackage{booktabs}
\usepackage{caption}
\usepackage{subcaption}
\usepackage{pgfplots}

\newcounter{lititem}[subsection]
\renewcommand{\thelititem}{\arabic{lititem}}

\newcommand{\lititem}[1]{%
	\par\medskip
	\refstepcounter{lititem}%
	\noindent\textnormal{(\thelititem)}~\emph{#1}%
	\par\nopagebreak\smallskip
}

\theoremstyle{definition}
\newtheorem{theorem}{Theorem}[section]
\newtheorem{lemma}[theorem]{Lemma}

\newtheorem{algorithm}[theorem]{Algorithm}
\newtheorem{assumption}[theorem]{Assumption}

\newtheorem{remark}[theorem]{Remark}
\numberwithin{equation}{section}

\newcommand{\bff}{\boldsymbol}
\newcommand{\bb}{\mathbb}

\newcommand{\dr}{\mathrm{d}r}
\newcommand{\dtt}{\mathrm{d}_t}

\newcommand{\ddt}{\frac{\mathrm{d}}{\mathrm{d}t}}
\newcommand{\dx}{\mathrm{d}x}
\newcommand{\ds}{\mathrm{d}s}

\newcommand{\kex}{\kappa_{\mathrm{e}}}
\newcommand{\kdm}{\kappa_{\mathrm{d}}}
\newcommand{\norm}[2]{\left\|{#1}\right\|_{#2}}
\newcommand{\inpro}[2]{\left\langle#1,#2\right\rangle}
\newcommand{\abs}[1]{\left|{#1}\right|}

\allowdisplaybreaks

\begin{document}
	\setcounter{page}{1}
	
	\title[A priori error analysis of a mass-lumped midpoint FEM for LLG with DMI]
	{A priori error analysis of a mass-lumped midpoint finite element method with a structure-preserving solver for the Landau--Lifshitz--Gilbert equation with Dzyaloshinskii--Moriya interaction}
	
	\author[Agus L. Soenjaya]{Agus L. Soenjaya}
	\address{School of Mathematics and Statistics, The University of New South Wales, Sydney 2052, Australia}
	\email{\textcolor[rgb]{0.00,0.00,0.84}{a.soenjaya@unsw.edu.au}}

	\keywords{Landau--Lifshitz--Gilbert equation, Dzyaloshinskii--Moriya interaction, finite element method, magnetic skyrmions, micromagnetics, midpoint scheme, a priori error estimates, length preserving}
	\subjclass{65M12, 65M15, 65M60, 35K55, 35Q60}
		
	\date{1 June 2026}
	
	\begin{abstract}
		The Landau--Lifshitz--Gilbert (LLG) equation is a fundamental model in
		micromagnetics for the magnetisation dynamics of ferromagnets at temperatures
		well below the Curie temperature. In chiral magnetic materials, the bulk
		Dzyaloshinskii--Moriya interaction (DMI) contributes a first-order term to the
		effective field, induces a natural chiral boundary condition, and plays an
		important role in the dynamics of skyrmions. In this paper, we
		analyse a mass-lumped midpoint finite element scheme for the LLG equation with
		exchange interaction and bulk DMI. The fully discrete scheme preserves the
		nodal unit-length constraint exactly and satisfies a discrete energy law. Under suitable regularity assumptions on the exact solution, we prove an optimal-order convergence estimate in the energy norm, first order in space and second order in time. To our knowledge, this is the first such a priori error estimate for the mass-lumped midpoint finite element method, even in the exchange-only case. Since the midpoint scheme is nonlinear, its practical implementation requires an algebraic solver that is compatible with the geometric structure of the method. To this end, we propose a structure-preserving fixed-point iteration which preserves the nodal unit-length constraint at every iterate. We further prove that, under an appropriate residual stopping criterion, the resulting inexact scheme retains the same convergence rate as the ideal midpoint scheme, up to the contribution of the solver tolerance. Numerical experiments support the theoretical results and illustrate the robustness and structure-preserving properties of the method.
	\end{abstract}
	\maketitle
	
\section{Introduction}

The Landau--Lifshitz--Gilbert (LLG) equation~\cite{Brown1963,LanLif35} is a fundamental continuum model for the magnetisation dynamics of ferromagnetic materials at temperatures well below the Curie temperature. It plays a central role in micromagnetics~\cite{Lak11} and underpins many technological applications, including magnetic sensors, spintronic devices, and magnetic recording media~\cite{IkeMiuYam10,ParHayTho08,RalSti08}. A defining feature of the LLG equation is the pointwise modulus constraint on the magnetisation. This non-convex geometric constraint, together with the strongly nonlinear structure of the equation, makes the design and rigorous analysis of accurate structure-preserving numerical methods a challenging problem in computational micromagnetics~\cite{Pro01}.

In chiral magnetic materials, an important additional mechanism is the Dzyaloshinskii--Moriya interaction (DMI), an antisymmetric exchange interaction arising in systems without inversion symmetry~\cite{Dzy58, Mor60}. The DMI favours magnetic configurations with a preferred handedness and is a key mechanism behind chiral spin textures such as helices, domain walls, and skyrmions~\cite{FinEtAl16}. These textures are of significant interest in spintronics and magnetic-memory applications, including skyrmion-based information storage and racetrack-type devices~\cite{FinEtAl16, KanEtAl16, ZhaEtAl23}.

\subsection{Problem introduction}

In this paper, we consider the LLG equation with short-range exchange interaction and bulk DMI. The exchange interaction gives the leading second-order contribution to the effective field~\cite{FidSch00}, while the bulk DMI contributes a first-order term and induces the natural chiral boundary condition~\cite{FinEtAl16}. Thus, although the modulus constraint remains unchanged, the DMI modifies the variational structure, boundary behaviour, and energy balance of the model. These features make the construction and analysis of stable and accurate finite element schemes for the LLG equation with DMI particularly delicate.

We now introduce the model considered in this paper. Let $\Omega\subset\bb{R}^d$, $d=1,2,3$, be a bounded domain with outward unit normal vector $\bff n$, and let $T>0$. The dynamics of the magnetisation $\bff m:[0,T]\times\Omega\to\bb{R}^3$, under the influence of an effective field $\mathcal{H}(\bff m)$ and at temperatures far below the Curie temperature, is governed by the Landau--Lifshitz--Gilbert equation:
\begin{subequations}\label{equ:llg}
	\begin{alignat}{2}
		\label{equ:llg eq1}
		&\partial_t \bff{m}
		=
		\alpha \bff{m}\times \partial_t \bff{m}
		-
		\gamma \bff{m}\times \mathcal{H}(\bff{m})
		\qquad && \text{in } (0,T)\times\Omega,
		\\[1ex]
		\label{equ:llg eq2}
		&\mathcal{H}(\bff{m})
		=
		\kex \Delta \bff{m}
		-
		2\kdm \nabla\times \bff{m}
		\qquad && \text{in } (0,T)\times\Omega,
		\\[1ex]
		\label{equ:llg init}
		&\bff{m}(0,\bff{x})
		=
		\bff{m}^0(\bff{x})
		\qquad && \text{for } \bff{x}\in \Omega,
		\\[1ex]
		\label{equ:llg bound}
		&\kex\partial_{\bff{n}} \bff{m}
		+
		\kdm \bff{m}\times \bff{n}
		=
		\bff{0}
		\qquad && \text{on } (0,T)\times\partial \Omega.
	\end{alignat}
\end{subequations}
Here, $\alpha>0$ is the Gilbert damping coefficient, $\gamma>0$ is the gyromagnetic ratio, $\kex>0$ is the exchange stiffness constant, and $\kdm\in\bb{R}$ measures the strength of the DMI. The vector $\bff{n}$ is the outward-pointing unit normal vector. The initial datum is assumed to satisfy the pointwise constraint $|\bff m^0|=1$. The LLG dynamics formally preserves this constraint, so that
\begin{equation}\label{eq:nonconvex-constraint}
|\bff m(t,\bff{x})|=1
\qquad\text{for } (t,\bff{x})\in [0,T]\times\Omega.
\end{equation}
The boundary condition in \eqref{equ:llg bound} is the natural chiral boundary condition associated with the exchange and bulk DMI energy.

The effective field $\mathcal{H}(\bff m)$ in \eqref{equ:llg eq2} is obtained as the negative variational derivative of the micromagnetic energy, which we assume to consist of the exchange energy and the DMI energy:
\begin{equation}\label{eq:energy}
\mathcal E(\bff m)
=
\mathcal{E}_{\rm{exc}}(\bff{m}) + \mathcal{E}_{\rm{DMI}}(\bff{m})
:=
\frac{\kex}{2}\norm{\nabla \bff m}{\bb L^2}^2
+
\kdm \inpro{\bff{m}}{\nabla\times\bff{m}}.
\end{equation}
Indeed, under the natural boundary condition \eqref{equ:llg bound}, one has
\[
\ddt \mathcal E \big(\bff m(t)\big)
=
-\inpro{\mathcal H(\bff m)}{\partial_t\bff m}.
\]
The energy-dissipation structure of the LLG equation is then obtained by testing
\eqref{equ:llg eq1} with $\bff m\times\partial_t\bff m$. Since
$|\bff m|=1$ implies $\bff m\cdot\partial_t\bff m=0$, we obtain
\[
\gamma\inpro{\mathcal H(\bff m)}{\partial_t\bff m}
=
\alpha \norm{\partial_t\bff m}{\bb L^2}^2.
\]
Consequently, sufficiently smooth solutions formally satisfy the energy identity
\begin{equation}\label{eq:energy-identity}
\mathcal E \big(\bff m(t) \big)
+
\frac{\alpha}{\gamma}
\int_0^t \norm{\partial_t\bff m(s)}{\bb L^2}^2\,\ds
=
\mathcal E(\bff m^0),
\qquad \forall t\in[0,T].
\end{equation}
Although additional lower-order contributions, such as applied fields, uniaxial anisotropy, or magnetostatic effects, can also be incorporated into the effective field, we restrict the analysis in this paper to the exchange and bulk DMI contributions. This choice allows us to focus on the main analytical difficulties associated with the higher-order terms.

\subsection{Literature review}

We next review existing analytical and numerical results in the literature that are relevant to the present paper. 

The analytical theory of the LLG equation is by now rather well developed, especially for effective fields consisting of the exchange contribution. The seminal works~\cite{Vis85,AloSoy92} established the existence of weak solutions and revealed the possible nonuniqueness of such solutions. Existence and uniqueness results for more regular solutions were subsequently obtained in~\cite{CarFab01}. For sufficiently small initial data, global existence of arbitrarily smooth solutions to the LLG equation with natural boundary conditions was proved in~\cite{FeiTra17b}. More recently, the weak-strong uniqueness principle was established in~\cite{DiInnPra20}: any weak solution coincides with a strong solution as long as both solutions emanate from the same initial datum and coexist on the same time interval. Compared with the exchange-only model, the inclusion of the bulk DMI introduces a first-order chiral contribution to the effective field and changes the natural boundary condition from the homogeneous Neumann condition to the chiral boundary condition in~\eqref{equ:llg bound}. Existence of weak solutions in this case was shown in~\cite{HrkPfePraRug19}. Mathematical analysis of micromagnetic models involving DMI, particularly in thin-film regimes, has been studied in~\cite{DavFraPraRug22,IgnFra25}.

From the numerical point of view, a central difficulty in the approximation of the LLG equation is the preservation of the pointwise modulus constraint~\eqref{eq:nonconvex-constraint}, while retaining a discrete counterpart of the energy law~\eqref{eq:energy-identity}. Since the present paper is concerned with a finite element approximation, we first recall the main finite element strategies developed for the LLG equation and highlight their treatment of the length constraint, energy stability, and convergence. In the following, $h$ and $k$ denote the spatial mesh size and time-step size, respectively, and $\bff m_h^j$ denotes the finite element approximation of $\bff m$ at time level $t_j$.

\lititem{Tangent-plane schemes with sphere projection \cite{Alo08,AloJai06,LeTra13}.}

In this class of methods, one first computes an approximation $\bff v_h^{j+1}$ of $\partial_t\bff m$ in the discrete tangent space at $\bff m_h^j$. An intermediate value
$\widehat{\bff m}_h^{j+1}= \bff m_h^j+k\bff v_h^{j+1}$ is then normalised nodally to produce $\bff m_h^{j+1}$. These schemes are attractive because they lead to linear systems at each time step and enforce the nodal unit-length constraint after normalisation. Their stability analysis is typically based on a discrete energy inequality, but the proof may require geometric conditions on the mesh~\cite{AloKriSteTou14}. Moreover, the treatment of additional first-order contributions such as the DMI may introduce further conditional stability requirements~\cite{HrkPfePraRug19}. Convergence to weak solutions has been proved in~\cite{Alo08,AloKriSteTou14}; see also~\cite{LeTra13} for coupling with the quasi-static Maxwell equations. Error estimates under sufficient regularity assumptions have been obtained more recently in~\cite{AnLiSun25}. A formally almost second-order modification of this approach was proposed in~\cite{FraPfePra20}.

\lititem{Projection-free tangent-plane schemes~\cite{AbeHrkPagPra14,Bar16}.}

These schemes modify the tangent-plane approach by avoiding the nodal normalisation step. This removes the mesh restrictions associated with the projection and leads to a simpler stability mechanism. The price is that the unit-length constraint is no longer imposed exactly at the nodes; instead, it is recovered only asymptotically as the discretisation parameters tend to zero. Convergence to weak solutions has been proved in~\cite{AbeHrkPagPra14,Bar16}, while a priori error estimates were established in~\cite{FeiTra17a, BarKovWan24}. Higher-order and BDF-type variants, including convergence results and error estimates, have been developed in~\cite{AkrFeiKovLub21} and more recently in~\cite{AldFeiPra26a,AldFeiPra26b}.

\lititem{Mass-lumped midpoint schemes~\cite{BarPro06,Cim09}.}

The mass-lumped midpoint method is a formally second-order-in-time implicit geometric scheme in which the mass-lumped inner product is used to enforce the unit-length constraint at the nodes. A key advantage of this method is that it preserves the nodal modulus constraint exactly and satisfies a discrete energy law without imposing geometric restrictions on the mesh. Its main drawback is that the resulting algebraic problem is nonlinear, so that a suitable structure-preserving nonlinear solver is required at each time step~\cite{BarPro06,Cim09}. Convergence to weak solutions was proved in~\cite{BarPro06}, while the effect of inexact nonlinear solvers, at the level of subsequential weak convergence, has been investigated in~\cite{Cim09,FraPfePraRug23}. To the best of our knowledge, however, a priori error estimates for the mass-lumped midpoint finite element scheme are not available even for the exchange-only effective field, and no rate estimates are known when solver errors are included.

\lititem{Other schemes for the LLG equation.}

Several other discretisations of the LLG equation have also been proposed. Linearly implicit finite element schemes based on alternative formulations of the equation were studied, for instance, in~\cite{An16,Cim05,Gao14}. These methods are computationally attractive, but the unit-length constraint is typically satisfied only asymptotically, and they may not preserve the energy structure. Higher-order linearly implicit BDF methods were analysed in~\cite{AkrFeiKovLub21,YanJia21}; these schemes provide high-order accuracy and rigorous error estimates under suitable regularity assumptions, but again do not impose the unit-length constraint exactly at the nodes. A generalised SAV-type method preserving the length constraint was recently proposed in~\cite{LiSheZhe26}; however, the dissipated quantity is a modified energy, and the analysis in that work is carried out at the time-semidiscrete level. Its extension to a fully discrete finite element scheme retaining the same favourable structural properties appears to be nontrivial.

\lititem{Numerical methods for LLG with DMI.}

Numerical methods for the LLG equation with DMI have also been considered, although rigorous finite element error estimates in this setting remain scarce. A finite difference scheme for the bulk-DMI model was proposed in~\cite{LiGuLanCheRen23}. In the finite element context, convergent tangent-plane integrators for chiral ferromagnets were analysed in~\cite{HrkPfePraRug19}, while the mass-lumped midpoint method with DMI was investigated computationally in~\cite{FraPfePraRug23}. The latter work also highlights the suitability of the midpoint scheme for energy-sensitive skyrmion dynamics, providing further motivation to study this scheme in the presence of DMI. Nevertheless, a priori error estimates for the mass-lumped midpoint finite element approximation of the LLG equation with bulk DMI appear to be unavailable in the literature.

\subsection{Contributions of the paper}

The goal of this paper is to provide a quantitative error analysis for the mass-lumped midpoint finite element approximation of the LLG equation with bulk DMI. This complements the existing convergence theory for structure-preserving LLG discretisations~\cite{BarPro06, HrkPfePraRug19}, where the mass-lumped midpoint method is known to preserve the nodal length constraint and to satisfy a discrete energy law, but where a priori convergence rates appear to be unavailable even for the exchange-only model.

The first contribution of this paper is the formulation and analysis of a fully discrete mass-lumped midpoint finite element scheme for the LLG equation with exchange and bulk DMI under the natural chiral boundary condition. The DMI term is incorporated through the discrete exchange--DMI energy, so that the resulting scheme is compatible with the chiral boundary condition at the variational level. We prove that the scheme preserves the nodal unit-length constraint exactly and satisfies a discrete counterpart of the continuous energy identity. The stability estimates derived from this structure provide the foundation for the subsequent error analysis.

The second and main contribution is an a priori error estimate for the ideal nonlinear midpoint scheme. Under suitable regularity assumptions on the exact solution, we prove that the fully discrete approximation converges with the rate
\[
\max_{0\le j\le J}
\norm{\bff m(t_j)-\bff m_h^j}{\bb H^1}
+
\left(
k\sum_{j=0}^{J-1}
\norm{\partial_t\bff m(t_{j+1/2})-d_t\bff m_h^{j+1}}{\bb L^2}^2
\right)^{\frac12}
\le C(h+k^2),
\]
up to the precise form stated in Theorem~\ref{thm:energy-error-estimate}. The proof combines the geometric cancellations of the midpoint method and careful estimates of the first-order DMI contribution and the chiral boundary condition. A notable feature of the ideal scheme is that its stability and convergence analysis does not require a mesh-dependent time-step restriction. This is in contrast to some existing structure-preserving schemes, where stability and convergence are obtained under additional assumptions such as an angle condition on the triangulation and a CFL-type condition; see, for example,~\cite{HrkPfePraRug19}.

The third contribution concerns the practical solution of the nonlinear algebraic system arising at each time step. We propose a structure-preserving fixed-point iteration for the midpoint scheme; see Algorithm~\ref{alg:constraint-preserving-fixed-point}. Unlike a generic nonlinear solver, the proposed iteration preserves the nodal unit-length constraint at every iterate and satisfies an inexact discrete energy law, with an additional defect term measuring the solver error. This makes the iteration compatible with the geometric and dissipative structure of the underlying midpoint scheme.

Finally, we prove an \emph{a priori} error estimate for the fully discrete method with the solver error taken into account. More precisely, under a certain CFL-type condition, we show that if the fixed-point iteration is terminated according to a suitable residual criterion, then the numerical solution produced by the inexact solver retains the same convergence rate as the exact nonlinear midpoint solution up to the contribution of the prescribed solver tolerance; see Theorem~\ref{thm:error-practical-constraint-preserving}. This result provides a rigorous justification of the practical algorithm and shows that the structure-preserving nonlinear iteration does not destroy the stability and accuracy properties of the midpoint discretisation. These theoretical findings
are supported by numerical experiments illustrating convergence, constraint
preservation, energy dissipation, and the qualitative effect of the DMI-induced
chiral boundary condition; see Section~\ref{sec:numerics}.

\section{Preliminaries}\label{sec:prelim}

In this section, we collect the notation and finite element tools used throughout
the paper.

\subsection{Notations}\label{subsec:notation}

Let $\Omega\subset\bb{R}^d$, $d=1,2,3$, be an open and bounded convex
polytopal domain. For $p\in [1,\infty]$ and $s\ge 0$, we write
\[
\bb{L}^p
:=
\bb{L}^p(\Omega;\bb{R}^3),
\qquad
\bb{W}^{s,p}
:=
\bb{W}^{s,p}(\Omega;\bb{R}^3).
\]
We also set
\[
\bb{H}^s:=\bb{W}^{s,2},
\qquad
\bb{W}^{0,p}:=\bb{L}^p.
\]
The differential operators $\nabla$ and $\Delta$ are understood componentwise when acting on $\bb R^3$-valued functions. For the curl operator, we use the standard
three-dimensional curl when $d=3$. In two space dimensions, for
$\bff v=(v_1,v_2,v_3)$, we use the convention
\[
\nabla\times\bff v
=
\left(
\partial_2 v_3,\,
-\partial_1 v_3,\,
\partial_1 v_2-\partial_2 v_1
\right)^\top,
\]
and in one space dimension,
\[
\nabla\times\bff v
=
\left(
0,\,
-\partial_1 v_3,\,
\partial_1 v_2
\right)^\top.
\]
The outward unit normal is embedded in $\bb R^3$ in the natural way in the
cases $d=1,2$.

If $X$ is a Banach space, then $L^p(0,T;X)$ and $W^{s,p}(0,T;X)$ denote the
usual Lebesgue--Bochner and Sobolev--Bochner spaces of functions from $(0,T)$
into $X$. Throughout the paper, the scalar product in a Hilbert space $H$ is
denoted by $\inpro{\cdot}{\cdot}_H$, with associated norm $\norm{\cdot}{H}$.
When $H=\bb{L}^2$, we simply write
\[
\inpro{\bff u}{\bff v}
:=
\int_\Omega \bff u\cdot\bff v\,d\bff x,
\qquad
\norm{\bff u}{\bb L^2}
:=
\sqrt{\inpro{\bff u}{\bff u}}.
\]
We use the same notation $\inpro{\cdot}{\cdot}$ for the $\bb{L}^2$ inner
product of vector-valued and matrix-valued functions; the meaning will always
be clear from the context.

Finally, $C$ denotes a generic positive constant which may take different values
at different occurrences, but is independent of the discretisation parameters
$h$ and $k$. When the dependence on a parameter is relevant, it is indicated
explicitly, for example by writing $C(T)$.

\subsection{Finite element approximation}\label{subsec:fe-prelim}

Let $\{\mathcal T_h\}_{h>0}$ be a family of shape-regular and quasi-uniform triangulations of $\Omega$ with maximal mesh-size $h>0$. Denote by $\mathcal N_h$ the set of vertices of $\mathcal T_h$. We use the lowest-order conforming Lagrange finite element space
\begin{equation}\label{eq:Vh-def}
	\bb V_h
	:=
	\left\{
	\bff{\phi}_h\in C^0(\overline\Omega;\bb R^3)
	:
	\bff{\phi}_h|_K\in \mathcal P_1(K;\bb R^3),
	\;\, \forall K\in\mathcal T_h
	\right\}
	\subset \bb H^1.
\end{equation}
Let $\{\varphi_z\}_{z\in\mathcal N_h}$ be the scalar nodal basis of the
underlying scalar $P_1$ finite element space, and we denote by $I_h: C^0(\overline\Omega;\bb R^3)\to \bb V_h$ the nodal interpolation operator. For continuous vector fields $\bff{u},\bff{v}\in C^0(\overline\Omega;\bb R^3)$, we define the reduced-integration, or mass-lumped, inner product by
\begin{equation}\label{eq:mass-lumped-inner-product}
	\inpro{\bff{u}}{\bff{v}}_h
	:=
	\int_\Omega I_h(\bff{u}\cdot\bff{v}) \,\dx
	=
	\sum_{z\in\mathcal N_h}\beta_z\,\bff{u}(z)\cdot\bff{v}(z),
	\quad \text{where }\;
	\beta_z:=\int_\Omega\varphi_z\,\dx.
\end{equation}
We write
\[
\norm{\bff{v}_h}{h}^2
:=
\inpro{\bff{v}_h}{\bff{v}_h}_h.
\]
The norm $\norm{\cdot}{h}$ is uniformly equivalent to the standard $\bb L^2$-norm on $\bb V_h$, namely
\begin{equation}\label{eq:lumped-L2-equivalence}
	\norm{\bff{v}_h}{\bb L^2}^2
	\le
	\norm{\bff{v}_h}{h}^2
	\le
	5\norm{\bff{v}_h}{\bb L^2}^2,
	\qquad
	\forall \bff{v}_h\in\bb V_h;
\end{equation}
see~\cite[Lemma 3.9]{Bar15}.

First, we recall the following standard approximation and inverse estimates~\cite{BreSco08, ErnGue04}. There is a constant $C$ independent of $h$, such that for every $\bff{v}\in\bb H^2$,
\begin{equation}\label{eq:Ih-H1-approx}
	\norm{\bff{v}-I_h\bff{v}}{\bb L^2}
	+
	h\norm{\nabla(\bff{v}-I_h\bff{v})}{\bb L^2}
	\le
	Ch^2\norm{\bff{v}}{\bb H^2}.
\end{equation}
More generally, if $p\in (d,\infty)$, then
\begin{equation}\label{eq:Ih-W1p-approx}
	\norm{\bff{v}-I_h\bff{v}}{\bb L^p}
	+
	h\norm{\nabla(\bff{v}-I_h\bff{v})}{\bb L^p}
	\le
	Ch^2\norm{\bff{v}}{\bb W^{2,p}}.
\end{equation}
For every $\bff{v}_h\in\bb V_h$, $s\in\{0,1\}$, and $1\le q\le p\le\infty$, the inverse estimate holds:
\begin{equation}\label{eq:inverse-estimate}
	\norm{\bff{v}_h}{\bb W^{s,p}}
	\le
	Ch^{-d\left(\frac1q-\frac1p\right)}
	\norm{\bff{v}_h}{\bb W^{s,q}}.
\end{equation}
Furthermore, we shall make use of the following quadrature estimate~\cite{BarPro06}: for all $\bff{v}_h,\bff{\chi}_h\in\bb V_h$,
\begin{equation}\label{eq:lumped-quadrature-Hminus1}
	\big|
	\inpro{\bff{v}_h}{\bff{\chi}_h}_h
	-
	\inpro{\bff{v}_h}{\bff{\chi}_h}
	\big|
	\le
	Ch\norm{\bff{v}_h}{\bb L^2}\norm{\bff{\chi}_h}{\bb H^1}.
\end{equation}
We also need a stability estimate for the nodal interpolant applied to piecewise smooth functions~\cite{BarKovWan24}. If $\bff{v}\in C^0(\overline\Omega;\bb R^3)$ and $\bff{v}|_K\in\bb H^2(K)$ for every $K\in\mathcal T_h$, then
\begin{equation}\label{eq:Ih-quasi-stability-H1}
	\norm{I_h\bff{v}}{\bb H^1}
	\le
	C\norm{\bff{v}}{\bb H^1}
	+
	Ch\norm{\mathrm D_h^2\bff{v}}{\bb L^2},
\end{equation}
where $\mathrm D_h^2$ denotes the elementwise Hessian.

Let $P_h:\bb L^2\to\bb V_h$ denote the usual $\bb L^2$-orthogonal projection onto $\bb{V}_h$,
defined by
\begin{equation}\label{eq:L2-projection}
	\inpro{P_h\bff{v}-\bff{v}}{\bff{\chi}_h}
	=
	0,
	\qquad
	\forall \bff{\chi}_h\in\bb V_h.
\end{equation}
On quasi-uniform triangulations, $P_h$ extends to a bounded operator on
$\bb L^p$ and is stable in $\bb W^{1,p}$~\cite{CroTho87, DouDupWah74}; more precisely, for any $p\in[1,\infty]$ and $s\in\{0,1\}$, there exists a constant $C$ independent
of $h$ such that
\begin{equation}\label{eq:Ph-Wsp-stability}
	\norm{P_h\bff{v}}{\bb W^{s,p}}
	\le
	C\norm{\bff{v}}{\bb W^{s,p}}.
\end{equation}
Moreover, for any $p\in (1,\infty)$ and $\bff{v}\in\bb W^{2,p}$,
\begin{equation}\label{eq:Ph-W2p-approx}
	\norm{\bff{v}-P_h\bff{v}}{\bb L^p}
	+
	h\norm{\nabla(\bff{v}-P_h\bff{v})}{\bb L^p}
	\le
	Ch^2\norm{\bff{v}}{\bb W^{2,p}}.
\end{equation}

We now introduce the discrete effective field operator corresponding to the effective field $\mathcal{H}$ in \eqref{equ:llg eq2}.
Define the bilinear form $\mathfrak a:\bb H^1\times\bb H^1\to\bb R$ by
\begin{equation}\label{eq:def-a-form}
	\mathfrak a(\bff{v},\bff{\chi})
	:=
	\kex\inpro{\nabla\bff{v}}{\nabla\bff{\chi}}
	+
	\kdm\inpro{\bff{v}}{\nabla\times\bff{\chi}}
	+
	\kdm\inpro{\nabla\times\bff{v}}{\bff{\chi}}.
\end{equation}
If $\bff{v}$ is sufficiently smooth and satisfies the chiral boundary condition
\begin{equation}\label{eq:chiral-boundary-condition}
	\kex\partial_{\bff{n}} \bff{v} + \kdm \bff{v}\times \bff{n}=\bff{0} \qquad \text{on } \partial\Omega,
\end{equation}
then integration by parts gives
\begin{equation}\label{eq:H-a-relation}
	\inpro{\mathcal H(\bff{v})}{\bff{\chi}}
	=
	-\mathfrak a(\bff{v},\bff{\chi}),
	\qquad
	\forall \bff{\chi}\in\bb H^1.
\end{equation}
The form $\mathfrak a$ given by \eqref{eq:def-a-form} is symmetric and continuous on $\bb H^1\times\bb H^1$. For fixed $\kex>0$ and $\kdm\in\bb R$, the form $\mathfrak a$ satisfies the G{\aa}rding inequality: there exist constants $C_0,C_1>0$ such that
\begin{equation}\label{eq:garding-a}
	\mathfrak a(\bff{v},\bff{v})
	+
	C_0 \norm{\bff{v}}{\bb L^2}^2
	\ge
	C_1 \norm{\bff{v}}{\bb H^1}^2,
	\qquad
	\forall \bff{v}\in\bb H^1.
\end{equation}

To mimic \eqref{eq:H-a-relation} at the discrete level, we define the (unshifted) discrete effective field operator $\mathcal H_h^0:\bb V_h\to\bb V_h$ by
\begin{equation}\label{eq:def-discrete-Hh-unshifted}
	\inpro{\mathcal H_h^0\bff{v}_h}{\bff{\chi}_h}_h
	=
	-\mathfrak a(\bff{v}_h,\bff{\chi}_h),
	\qquad
	\forall \bff{v}_h,\bff{\chi}_h\in\bb V_h.
\end{equation}
In the pure exchange case $\kdm=0$, this reduces to $\mathcal H_h^0\bff{v}_h=\kex\Delta_h\bff{v}_h$, where $\Delta_h:\bb V_h\to\bb V_h$ is the discrete Laplacian~\cite{BarPro06} defined by
\begin{equation*}
	\inpro{\Delta_h\bff{v}_h}{\bff{\chi}_h}_h
	=
	-\inpro{\nabla\bff{v}_h}{\nabla\bff{\chi}_h},
	\qquad
	\forall \bff{v}_h,\bff{\chi}_h\in\bb V_h.
\end{equation*}

Since $\mathfrak a$ is only G{\aa}rding coercive in the sense of \eqref{eq:garding-a}, to simplify our analysis we introduce a fixed shift parameter $\lambda>0$. We choose $\lambda$ sufficiently large such that the shifted discrete bilinear form
\begin{equation}\label{eq:def-shifted-form}
	\mathfrak a_{h,\lambda}(\bff{v}_h,\bff{\chi}_h)
	:=
	\mathfrak a(\bff{v}_h,\bff{\chi}_h)
	+
	\lambda\inpro{\bff{v}_h}{\bff{\chi}_h}_h,
	\qquad
	\forall \bff{v}_h,\bff{\chi}_h\in\bb V_h
\end{equation}
is uniformly coercive on $\bb V_h$, namely
\begin{equation}\label{eq:shifted-coercivity}
	\mathfrak a_{h,\lambda}(\bff{v}_h,\bff{v}_h)
	\ge
	c_\lambda\norm{\bff{v}_h}{\bb H^1}^2,
	\qquad
	\forall \bff{v}_h\in\bb V_h.
\end{equation}
Indeed, \eqref{eq:garding-a} and the norm equivalence
\eqref{eq:lumped-L2-equivalence} imply
\[
\mathfrak a_{h,\lambda}(\bff{v}_h,\bff{v}_h)
\ge
C_1 \norm{\bff{v}_h}{\bb H^1}^2
+
(\lambda - C_0) \norm{\bff{v}_h}{\bb L^2}^2,
\]
so \eqref{eq:shifted-coercivity} follows by choosing $\lambda\geq C_0$.

Moreover, we define the shifted continuous and discrete effective fields by
\begin{equation}\label{eq:def-shifted-fields}
	\mathcal H_\lambda(\bff{v})
	:=
	\mathcal H(\bff{v})-\lambda\bff{v},
	\qquad
	\mathcal H_{h,\lambda} \bff{v}_h
	:=
	\mathcal H_h^0\bff{v}_h-\lambda\bff{v}_h.
\end{equation}
Then, by \eqref{eq:def-discrete-Hh-unshifted},
\begin{equation}\label{eq:def-discrete-shifted-H}
	\inpro{\mathcal H_{h,\lambda} \bff{v}_h}{\bff{\chi}_h}_h
	=
	-\mathfrak a_{h,\lambda}(\bff{v}_h,\bff{\chi}_h),
	\qquad
	\forall \bff{v}_h,\bff{\chi}_h\in\bb V_h.
\end{equation}
For $\bff{v}\in \bb{H}^2$ satisfying \eqref{eq:chiral-boundary-condition}, we define the elliptic projection $R_h\bff{v}\in\bb V_h$ by
\begin{equation}\label{eq:def-DMI-Ritz-projection-field}
	\mathcal H_{h,\lambda} R_h\bff{v}
	=
	P_h\mathcal H_\lambda (\bff{v}).
\end{equation}
Equivalently, since \eqref{eq:def-DMI-Ritz-projection-field} is an equality in $\bb V_h$ represented through the mass-lumped inner product,
\begin{equation}\label{eq:def-DMI-Ritz-projection-weak}
	\mathfrak a_{h,\lambda}(R_h \bff{v},\bff{\chi}_h)
	=
	-\inpro{P_h\mathcal H_\lambda(\bff{v})}{\bff{\chi}_h}_h,
	\qquad
	\forall \bff{\chi}_h\in\bb V_h.
\end{equation}
By \eqref{eq:shifted-coercivity}, this elliptic projection is well-defined for every admissible $\bff{v}$.

\begin{remark}\label{rem:shift-does-not-change-LLG}
	Replacing $\mathcal{H}$ by $\mathcal{H}_\lambda$ does not change the LLG equation at the continuous level, since $\bff{m}\times\mathcal H_\lambda(\bff{m})= \bff{m}\times\mathcal H(\bff{m})$. Similarly, replacing $\mathcal{H}_h^0$ by $\mathcal{H}_{h,\lambda}$ at the discrete level will also not change the ideal midpoint scheme to be defined later.
\end{remark}

Other technical estimates used in the \emph{a priori} error analysis in the next section are proved in Appendix~\ref{sec:appendix}. They concern the consistency of the midpoint approximation, projection errors, and bounds related to the mass-lumped inner product and the discrete effective field. We will refer to these results when needed in order to keep the main argument focused.

\section{The ideal midpoint scheme and its error analysis}\label{sec:error-analysis}

Fix $N\in\mathbb N$ and let $\bb{V}_h$ be the finite element space defined in \eqref{eq:Vh-def}. Let $k:=T/N$ be the time step size and write $t_i:=ik$ for $i\in \{0,1,\ldots,N\}$. For a sequence $\{\bff v^i\}_{i=0}^N$, we write
\[
\dtt\bff v^{i+1}:=\frac{\bff v^{i+1}-\bff v^i}{k},
\qquad
\overline{\bff v}^{i+\frac12}:=\frac12(\bff v^{i+1}+\bff v^i).
\]
For the exact solution, we use the notation
\[
\bff m^i:=\bff m(t_i),
\qquad
\bff m^{i+\frac12}:=\bff m(t_{i+\frac12}),
\qquad
\dot{\bff m}^{i+\frac12}:=\partial_t\bff m(t_{i+\frac12}),
\]
and
\[
\overline{\bff m}^{i+\frac12}
:=
\frac12(\bff m^{i+1}+\bff m^i).
\]

We first present the fully implicit midpoint scheme in its ideal form, in which the nonlinear algebraic system arising at each time step is assumed to be solved exactly. This idealised scheme is the object of the stability and error analysis in the next sections, and it also serves as the reference scheme for the inexact solver considered later.

\begin{algorithm}[Ideal fully implicit midpoint scheme]
	\label{alg:exact-fully-implicit-midpoint}
	Set $\bff{m}_h^0:=I_h\bff{m}^0$.
	\\
	\textbf{For} $i=0$ to $N-1$, given $\bff{m}_h^i\in\bb V_h$, \textbf{do}:
	\\[1ex]
	 Find $\bff{m}_h^{i+1}\in\bb V_h$ such that, for all $\bff{\phi}_h\in\bb V_h$,
		\begin{equation}\label{eq:midpoint-scheme}
			\inpro{\dtt\bff m_h^{i+1}}{\bff\phi_h}_h
			-
			\alpha
			\inpro{
				\overline{\bff m}_h^{i+\frac12}
				\times
				\dtt\bff m_h^{i+1}
			}{\bff\phi_h}_h
			+
			\gamma
			\inpro{
				\overline{\bff m}_h^{i+\frac12}
				\times
				\mathcal H_{h,\lambda}\overline{\bff m}_h^{i+\frac12}
			}{\bff\phi_h}_h
			=
			0.
		\end{equation}
	\textbf{Output}: a sequence of approximations $\{\bff{m}_h^i\}_{1\leq i\leq N}$.
\end{algorithm}

Since we have
\begin{equation}\label{eq:shift-do-not-change}
\overline{\bff m}_h^{i+\frac12}
\times
\mathcal H_{h,\lambda}\overline{\bff m}_h^{i+\frac12}
=
\overline{\bff m}_h^{i+\frac12}
\times
\mathcal H_h^0\overline{\bff m}_h^{i+\frac12},
\end{equation}
the scheme \eqref{eq:midpoint-scheme} is identical to the unshifted midpoint scheme ($\lambda=0$). When $\kdm=0$, this scheme was considered in \cite{BarPro06}, where convergence towards a weak solution (along a subsequence and without rate) was shown. This scheme is well-posed by the same argument as in~\cite{BarPro06, FraPfePraRug23}.

For fixed $\bff m_h^i$, the scheme \eqref{eq:midpoint-scheme} is a finite-dimensional nonlinear algebraic system for the unknown $\bff m_h^{i+1}$. The nonlinearity comes from the precession term
\[
\overline{\bff m}_h^{i+\frac12}
\times
\mathcal H_{h,\lambda}\overline{\bff m}_h^{i+\frac12},
\]
where both factors depend on the unknown midpoint value. The Gilbert term is comparatively simpler, since
\[
\overline{\bff m}_h^{i+\frac12}
\times
\dtt\bff m_h^{i+1}
=
\frac1k\bff m_h^i\times\bff m_h^{i+1}.
\]
Thus, at each time step, one has to solve a nonlinear system, for instance by fixed-point iteration, Newton's method, or variants thereof; see, e.g.,~\cite{BarPro06,FraPfePraRug23} for detailed discussions. We now study the idealised scheme \eqref{eq:midpoint-scheme}, assuming that this nonlinear system is solved exactly. The effect of an inexact nonlinear solve is addressed later in Section~\ref{sec:inexact-solver}, where we propose a structure-preserving fixed-point iteration and incorporate the resulting solver error into the analysis.

The following regularity assumption is made to accommodate our \emph{a priori} error analysis.

\begin{assumption}[Regularity of the exact solution]\label{ass:exact-regularity}
	Let $p> \max\{d,2\}$ be fixed such that
	\[
	d\left(\frac12-\frac1p\right)\le 1.
	\]
	In particular, when $d=3$ we take $p\in (3,6]$. We assume that the exact solution $\bff m$ of \eqref{equ:llg} satisfies
	\begin{equation}\label{eq:exact-regularity-main}
		\bff m
		\in
		L^\infty(0,T;\bb H^4\cap\bb W^{3,p})
		\cap
		W^{1,\infty}(0,T;\bb H^2\cap\bb W^{1,p})
		\cap
		W^{2,\infty}(0,T;\bb H^3)
		\cap
		W^{3,\infty}(0,T;\bb H^1).
	\end{equation}
	Moreover, $\bff m(t)$ satisfies the chiral boundary condition \eqref{equ:llg bound}
	for every $t\in[0,T]$ in the sense required for the elliptic projection $R_h\bff m(t)$.
	
	The existence of arbitrarily regular solutions to the LLG equation, at least for sufficiently small initial data and with $\mathcal{H}(\bff{m})$ consisting of only the exchange field, was shown in~\cite{FeiTra17b}.
\end{assumption}

\begin{remark}\label{rem:regularity-sufficient-not-minimal}
	Assumption~\ref{ass:exact-regularity} is a convenient sufficient condition and is not intended to be minimal. The requirement $\bff m\in L^\infty(0,T;\bb H^4)$
	ensures that $\mathcal H_\lambda(\bff m)
	\in
	L^\infty(0,T;\bb H^2)$,
	which is used to estimate the projection defect $P_h\mathcal H_\lambda(\bff m)-\mathcal H_\lambda(\bff m)$ by \eqref{eq:Ph-W2p-approx}.
	
	The requirement $\bff m\in L^\infty(0,T;\bb W^{3,p})$
	ensures that
	$\mathcal H_\lambda(\bff m)
	\in
	L^\infty(0,T;\bb W^{1,p})$,
	which is needed for the bound
	\[
	\norm{P_h\mathcal H_\lambda(\overline{\bff m}^{i+\frac12})}{\bb W^{1,p}}
	\le C.
	\]
	
	The regularity of $\partial_t\bff m$ gives \eqref{eq:dt-Rhm-W1p-bound},
	while the assumptions on $\partial_{tt}\bff m$ and $\partial_{ttt}\bff m$ give the midpoint consistency estimates \eqref{eq:midpoint-consistency}, to be proven in Lemma~\ref{lem:midpoint-consistency}.
	These estimates are essential for our \emph{a priori} error analysis.
\end{remark}

At each time step, the solution of \eqref{eq:midpoint-scheme} belongs to the set
\begin{align}\label{eq:Mh}
	\mathcal{M}_h:= \{\bff{\phi}_h\in \bb{V}_h: \abs{\bff{\phi}_h}=1, \quad \forall \bff{z}\in \mathcal{N}_h\},
\end{align}
a property which we refer to as the nodal length preservation.
The following lemma proves this property and the energy stability of the scheme.

\begin{lemma}[Nodal length preservation and energy stability]\label{lem:scheme-stability}
	Suppose that $\bff m_h^{i+1}$ solves \eqref{eq:midpoint-scheme}. Then for any $i\in \{0,1,\ldots,N-1\}$,
	\begin{equation}\label{eq:nodal-length-preservation}
		|\bff m_h^{i+1}(z)|=|\bff m_h^i(z)|,
		\qquad
		\forall z\in\mathcal N_h.
	\end{equation}
	Equivalently, $\bff{m}_h^i\in \mathcal{M}_h$ for all $i$ if $\bff{m}_h^0\in \mathcal{M}_h$.
	Moreover, we have the discrete energy stability
	\begin{equation}\label{eq:discrete-energy-stability}
		\frac{\gamma}{2\alpha}
		\left[
		\mathfrak a_{h,\lambda}(\bff m_h^{i+1},\bff m_h^{i+1})
		-
		\mathfrak a_{h,\lambda}(\bff m_h^i,\bff m_h^i)
		\right]
		+
		k\norm{\dtt\bff m_h^{i+1}}{h}^2
		=
		0.
	\end{equation}
	Consequently, it holds that
	\begin{equation}\label{eq:scheme-H1-stability}
		\max_{0\le j\le N} \left(\norm{\bff m_h^j}{\bb L^\infty}^2 + \norm{\bff m_h^j}{\bb H^1}^2 \right)
		+
		k\sum_{i=0}^{N-1}\norm{\dtt\bff m_h^{i+1}}{h}^2
		\le
		C,
	\end{equation}
	where the constant $C$ may depend on $\norm{\bff{m}^0}{\bb{H}^1}$, but is independent of $n$, $h$, or $k$.
\end{lemma}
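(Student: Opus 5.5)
The plan is to test \eqref{eq:midpoint-scheme} with nodal test functions and exploit that the mass-lumped inner product decouples node by node.

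\textbf{Nodal length preservation.} Fix $z\in\mathcal N_h$ and take $\bff\phi_h:=\varphi_z\,\overline{\bff m}_h^{i+\frac12}(z)$, or equivalently $\bff\phi_h = I_h(\varphi_z\overline{\bff m}_h^{i+\frac12})$. By \eqref{eq:mass-lumped-inner-product}, only node $z$ contributes to each mass-lumped product. Both cross-product terms then vanish pointwise at $z$, since $(\bff a\times\bff b)\cdot\bff a=0$. What remains is
\[
\beta_z\,\dtt\bff m_h^{i+1}(z)\cdot\overline{\bff m}_h^{i+\frac12}(z)
=\frac{\beta_z}{2k}\big(|\bff m_h^{i+1}(z)|^2-|\bff m_h^i(z)|^2\big)=0,
\]
and \eqref{eq:nodal-length-preservation} follows because $\beta_z>0$. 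Induction on $i$, starting from $\bff m_h^0=I_h\bff m^0\in\mathcal M_h$, gives $\bff m_h^i\in\mathcal M_h$ for all $i$.

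\textbf{Energy identity.} I would test with $\bff\phi_h := \mathcal H_{h,\lambda}\overline{\bff m}_h^{i+\frac12}$ and with $\bff\phi_h:=\dtt\bff m_h^{i+1}$, all lumped products again being evaluated nodewise.
\begin{itemize}
\item The second test kills the Gilbert term. It gives $\norm{\dtt\bff m_h^{i+1}}{h}^2 = -\gamma\inpro{\overline{\bff m}_h^{i+\frac12}\times\mathcal H_{h,\lambda}\overline{\bff m}_h^{i+\frac12}}{\dtt\bff m_h^{i+1}}_h$.
\item The first test kills the precession term. It gives $\inpro{\dtt\bff m_h^{i+1}}{\mathcal H_{h,\lambda}\overline{\bff m}_h^{i+\frac12}}_h = \alpha\inpro{\overline{\bff m}_h^{i+\frac12}\times\dtt\bff m_h^{i+1}}{\mathcal H_{h,\lambda}\overline{\bff m}_h^{i+\frac12}}_h$.
\end{itemize}
By the cyclic property of the triple product, the right-hand side of the second line equals $-\alpha\gamma^{-1}\norm{\dtt\bff m_h^{i+1}}{h}^2$.

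The left-hand side of the second line is handled with \eqref{eq:def-discrete-shifted-H} and the symmetry of $\mathfrak a_{h,\lambda}$. This gives
\[
-\mathfrak a_{h,\lambda}(\overline{\bff m}_h^{i+\frac12},\dtt\bff m_h^{i+1})
=-\frac{1}{2k}\big[\mathfrak a_{h,\lambda}(\bff m_h^{i+1},\bff m_h^{i+1})-\mathfrak a_{h,\lambda}(\bff m_h^i,\bff m_h^i)\big].
\]
Multiplying by $k\gamma/\alpha$ yields \eqref{eq:discrete-energy-stability}. Keeping the signs and the factor $\gamma/\alpha$ consistent is the only delicate point in this step.

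\textbf{Uniform bounds.} Summing \eqref{eq:discrete-energy-stability} over $i$ gives
\[
\frac{\gamma}{2\alpha}\mathfrak a_{h,\lambda}(\bff m_h^j,\bff m_h^j)+k\sum_{i<j}\norm{\dtt\bff m_h^{i+1}}{h}^2=\frac{\gamma}{2\alpha}\mathfrak a_{h,\lambda}(\bff m_h^0,\bff m_h^0).
\]
The right-hand side is bounded by $C\norm{I_h\bff m^0}{\bb H^1}^2$, using continuity of $\mathfrak a$ together with \eqref{eq:lumped-L2-equivalence}. This is controlled by the data, using $H^1$-stability of $I_h$ on the regular initial datum, e.g. via \eqref{eq:Ih-H1-approx}. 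Coercivity \eqref{eq:shifted-coercivity} then bounds $\norm{\bff m_h^j}{\bb H^1}^2$ and the dissipation sum.

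For the $\bb L^\infty$ bound, a P1 function attains its maximum modulus at a vertex, by convexity of $|\cdot|$ on each element. Nodal unit length therefore gives $\norm{\bff m_h^j}{\bb L^\infty}\le1$.

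The main obstacle is modest. It is the dependence of the initial-energy bound on the datum: strictly this needs $\norm{I_h\bff m^0}{\bb H^1}$ rather than $\norm{\bff m^0}{\bb H^1}$, and I would justify it through Assumption~\ref{ass:exact-regularity}.
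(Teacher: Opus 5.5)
Your route is the paper's. You test nodally with $\varphi_z\overline{\bff m}_h^{i+\frac12}(z)$ for the length constraint. For the energy law you test with $\dtt\bff m_h^{i+1}$ and with $\mathcal H_{h,\lambda}\overline{\bff m}_h^{i+\frac12}$; the paper tests with $-\frac{\gamma}{\alpha}\mathcal H_{h,\lambda}\overline{\bff m}_h^{i+\frac12}$ and adds, which is the same elimination as your substitution. You then finish with telescoping, coercivity, and the vertex-maximum argument.

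However, the sign you assert at the step you yourself call delicate is wrong, so as written your chain does not give \eqref{eq:discrete-energy-stability}. Your first bullet gives $\inpro{\overline{\bff m}_h^{i+\frac12}\times\mathcal H_{h,\lambda}\overline{\bff m}_h^{i+\frac12}}{\dtt\bff m_h^{i+1}}_h=-\gamma^{-1}\norm{\dtt\bff m_h^{i+1}}{h}^2$. Nodewise $(\bff a\times\bff b)\cdot\bff c=-(\bff a\times\bff c)\cdot\bff b$, so
\[
\alpha\inpro{\overline{\bff m}_h^{i+\frac12}\times\dtt\bff m_h^{i+1}}{\mathcal H_{h,\lambda}\overline{\bff m}_h^{i+\frac12}}_h
=
-\alpha\inpro{\overline{\bff m}_h^{i+\frac12}\times\mathcal H_{h,\lambda}\overline{\bff m}_h^{i+\frac12}}{\dtt\bff m_h^{i+1}}_h
=
+\frac{\alpha}{\gamma}\norm{\dtt\bff m_h^{i+1}}{h}^2,
\]
not $-\alpha\gamma^{-1}\norm{\dtt\bff m_h^{i+1}}{h}^2$.

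With your sign, you would equate this with the left-hand side $-\frac{1}{2k}\big[\mathfrak a_{h,\lambda}(\bff m_h^{i+1},\bff m_h^{i+1})-\mathfrak a_{h,\lambda}(\bff m_h^i,\bff m_h^i)\big]$ and multiply by $k\gamma/\alpha$. That yields $\frac{\gamma}{2\alpha}[\cdots]-k\norm{\dtt\bff m_h^{i+1}}{h}^2=0$, an energy-\emph{increasing} law, from which neither \eqref{eq:discrete-energy-stability} nor \eqref{eq:scheme-H1-stability} follows. With the corrected sign the identity comes out exactly. It also agrees with the continuous relation $\gamma\inpro{\mathcal H(\bff m)}{\partial_t\bff m}=\alpha\norm{\partial_t\bff m}{\bb L^2}^2$ from the introduction.

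Everything else in your proposal is correct. Your remark about the constant is a fair sharpening of a point the paper passes over: it genuinely depends on $\norm{I_h\bff m^0}{\bb H^1}$, which is controlled through \eqref{eq:Ih-H1-approx} and the extra regularity of $\bff m^0$.
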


\begin{proof}
	Since the test function in \eqref{eq:midpoint-scheme} is arbitrary and the mass-lumped inner product is nodal, the scheme is equivalent to a system of nodal equations. Testing the nodal equation at $z\in\mathcal N_h$ with
	$\overline{\bff m}_h^{i+\frac12}(z)$ gives
	\[
	\dtt\bff m_h^{i+1}(z)\cdot
	\overline{\bff m}_h^{i+\frac12}(z)
	=
	0,
	\]
	since each cross-product term is orthogonal to
	$\overline{\bff m}_h^{i+\frac12}(z)$. Therefore
	\[
	|\bff m_h^{i+1}(z)|^2-|\bff m_h^i(z)|^2
	=
	2k\,\dtt\bff m_h^{i+1}(z)\cdot
	\overline{\bff m}_h^{i+\frac12}(z)
	=
	0,
	\]
	which proves \eqref{eq:nodal-length-preservation}.
	
	We now prove \eqref{eq:discrete-energy-stability}. Taking
	$\bff\phi_h=\dtt\bff m_h^{i+1}$ in \eqref{eq:midpoint-scheme} gives
	\begin{equation}\label{eq:scheme-test-dt}
		\norm{\dtt\bff m_h^{i+1}}{h}^2
		+
		\gamma
		\inpro{
			\overline{\bff m}_h^{i+\frac12}
			\times
			\mathcal H_{h,\lambda}\overline{\bff m}_h^{i+\frac12}
		}{
			\dtt\bff m_h^{i+1}
		}_h
		=
		0.
	\end{equation}
	Next, taking $\bff\phi_h
	=
	-\frac{\gamma}{\alpha}
	\mathcal H_{h,\lambda}\overline{\bff m}_h^{i+\frac12}$
	in \eqref{eq:midpoint-scheme}, we obtain
	\begin{align}
		&-\frac{\gamma}{\alpha}
		\inpro{
			\dtt\bff m_h^{i+1}
		}{
			\mathcal H_{h,\lambda}\overline{\bff m}_h^{i+\frac12}
		}_h
		+
		\gamma
		\inpro{
			\overline{\bff m}_h^{i+\frac12}
			\times
			\dtt\bff m_h^{i+1}
		}{
			\mathcal H_{h,\lambda}\overline{\bff m}_h^{i+\frac12}
		}_h
		=
		0.
		\label{eq:scheme-test-field}
	\end{align}
	By adding \eqref{eq:scheme-test-dt} and \eqref{eq:scheme-test-field}, the field-cross term cancels. We note that by \eqref{eq:def-discrete-shifted-H},
	\[
	\inpro{
		\dtt\bff m_h^{i+1}
	}{
		\mathcal H_{h,\lambda}\overline{\bff m}_h^{i+\frac12}
	}_h
	=
	-\mathfrak a_{h,\lambda}
	\left(
	\overline{\bff m}_h^{i+\frac12},
	\dtt\bff m_h^{i+1}
	\right).
	\]
	Furthermore, since $\mathfrak a_{h,\lambda}$ is symmetric,
	\[
	\mathfrak a_{h,\lambda}
	\left(
	\overline{\bff m}_h^{i+\frac12},
	\dtt\bff m_h^{i+1}
	\right)
	=
	\frac{1}{2k}
	\left[
	\mathfrak a_{h,\lambda}(\bff m_h^{i+1},\bff m_h^{i+1})
	-
	\mathfrak a_{h,\lambda}(\bff m_h^i,\bff m_h^i)
	\right].
	\]
	This proves \eqref{eq:discrete-energy-stability}.
	
	Finally, since $\bff{m}_h^i$ is piecewise affine, it is a convex combination of its nodal values on each element. Hence, \eqref{eq:nodal-length-preservation}, together with the fact that $\bff m_h^0=I_h\bff m^0$ and $|\bff m^0|=1$, implies $\norm{\bff{m}_h^i}{\bb{L}^\infty}\leq 1$. Bounds on the rest of the terms in \eqref{eq:scheme-H1-stability} follow from \eqref{eq:discrete-energy-stability} and the coercivity of $\mathfrak a_{h,\lambda}$.
\end{proof}

We next derive the consistency equation. Define
\begin{equation}\label{eq:def-consistency-defects}
	\bff E_m^{i+\frac12}
	:=
	R_h\overline{\bff m}^{i+\frac12}
	-
	\bff m^{i+\frac12},
	\qquad
	\bff E_t^{i+1}
	:=
	\dtt R_h\bff m^{i+1}
	-
	\dot{\bff m}^{i+\frac12},
\end{equation}
and
\begin{equation}\label{eq:def-field-defect}
	\bff E_H^{i+\frac12}
	:=
	P_h\mathcal H_\lambda(\overline{\bff m}^{i+\frac12})
	-
	\mathcal H_\lambda(\bff m^{i+\frac12}).
\end{equation}
By Lemmas~\ref{lem:DMI-Ritz-H1} and \ref{lem:midpoint-consistency}, as well as the approximation properties of $P_h$ in \eqref{eq:Ph-W2p-approx}, we have
\begin{equation}\label{eq:basic-consistency-defect-bounds}
	\norm{\bff E_m^{i+\frac12}}{\bb H^1}
	+
	\norm{\bff E_t^{i+1}}{\bb H^1}
	+
	\norm{\bff E_H^{i+\frac12}}{\bb H^1}
	\le
	C(h+k^2).
\end{equation}
Furthermore, Lemma~\ref{lem:smooth-W1p-bounds} and the stability of $P_h$ in \eqref{eq:Ph-Wsp-stability} imply
\begin{equation}\label{eq:coefficient-W1p-bounds}
	\norm{\dtt R_h\bff m^{i+1}}{\bb W^{1,p}}
	+
	\norm{P_h\mathcal H_\lambda(\overline{\bff m}^{i+\frac12})}{\bb W^{1,p}}
	\le
	C.
\end{equation}
These estimates will be repeatedly used in the sequel.

Next, we record in the following lemma the projected exact equation and a consistency estimate.

\begin{lemma}[Projected exact equation and consistency estimate]\label{lem:projected-exact-consistency}
	For each $i\in \{0,1,\ldots,N-1\}$, the projected exact solution satisfies
	\begin{align}
		&\inpro{\dtt R_h\bff m^{i+1}}{\bff\phi_h}_h
		-
		\alpha
		\inpro{
			R_h\overline{\bff m}^{i+\frac12}
			\times
			\dtt R_h\bff m^{i+1}
		}{
			\bff\phi_h
		}_h
		\notag
		\\
		&\qquad
		+
		\gamma
		\inpro{
			R_h\overline{\bff m}^{i+\frac12}
			\times
			\mathcal H_{h,\lambda}R_h\overline{\bff m}^{i+\frac12}
		}{
			\bff\phi_h
		}_h
		=
		\inpro{\bff\rho_h^{i+1}}{\bff\phi_h}_h
		\label{eq:projected-exact-equation}
	\end{align}
	for all $\bff\phi_h\in\bb V_h$, where the consistency residual
	\begin{align}
		\bff\rho_h^{i+1}
		&:=
		\dtt R_h\bff m^{i+1}
		-
		I_h\dot{\bff m}^{i+\frac12}
		\notag
		\\
		&\quad
		-
		\alpha
		I_h
		\left(
		R_h\overline{\bff m}^{i+\frac12}
		\times
		\dtt R_h\bff m^{i+1}
		-
		\bff m^{i+\frac12}
		\times
		\dot{\bff m}^{i+\frac12}
		\right)
		\notag
		\\
		&\quad
		+
		\gamma
		I_h
		\left(
		R_h\overline{\bff m}^{i+\frac12}
		\times
		P_h\mathcal H_\lambda(\overline{\bff m}^{i+\frac12})
		-
		\bff m^{i+\frac12}
		\times
		\mathcal H_\lambda(\bff m^{i+\frac12})
		\right).
		\label{eq:def-rho-consistency}
	\end{align}
	Moreover, we have the estimate
	\begin{equation}\label{eq:rho-estimate}
		\norm{\bff\rho_h^{i+1}}{\bb H^1}
		\le
		C(h+k^2).
	\end{equation}
\end{lemma}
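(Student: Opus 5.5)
The proof has two parts: an algebraic identity giving \eqref{eq:projected-exact-equation}, and term-by-term bounds giving \eqref{eq:rho-estimate}.

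\textbf{The identity.} First evaluate the exact equation \eqref{equ:llg eq1} at $t_{i+\frac12}$. By Remark~\ref{rem:shift-does-not-change-LLG}, $\mathcal H$ may be replaced by $\mathcal H_\lambda$, which gives
\[
\dot{\bff m}^{i+\frac12}-\alpha\,\bff m^{i+\frac12}\times\dot{\bff m}^{i+\frac12}+\gamma\,\bff m^{i+\frac12}\times\mathcal H_\lambda(\bff m^{i+\frac12})=\bff 0
\]
pointwise in $\Omega$. By Assumption~\ref{ass:exact-regularity} every term is continuous, so we may apply $I_h$ and take the mass-lumped inner product with $\bff\phi_h$. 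The lumped product only sees nodal values, so $\inpro{\bff u}{\bff\phi_h}_h=\inpro{I_h\bff u}{\bff\phi_h}_h$.

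Next use \eqref{eq:def-DMI-Ritz-projection-field} together with linearity of $R_h$ and $P_h$. This gives
\[
\mathcal H_{h,\lambda}R_h\overline{\bff m}^{i+\frac12}=P_h\mathcal H_\lambda(\overline{\bff m}^{i+\frac12}).
\]
The chiral boundary condition is linear in $\bff m$, so $\overline{\bff m}^{i+\frac12}$ satisfies it and $R_h\overline{\bff m}^{i+\frac12}$ is defined. Hence the precession term on the left of \eqref{eq:projected-exact-equation} equals $\inpro{I_h(R_h\overline{\bff m}^{i+\frac12}\times P_h\mathcal H_\lambda(\overline{\bff m}^{i+\frac12}))}{\bff\phi_h}_h$. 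Subtracting the interpolated exact equation from the left-hand side of \eqref{eq:projected-exact-equation} then yields exactly $\inpro{\bff\rho_h^{i+1}}{\bff\phi_h}_h$ with $\bff\rho_h^{i+1}$ as in \eqref{eq:def-rho-consistency}. This part is bookkeeping.

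\textbf{The estimate.} Bound the three groups in \eqref{eq:def-rho-consistency} separately.

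\emph{First group.} Write $\dtt R_h\bff m^{i+1}-I_h\dot{\bff m}^{i+\frac12}=\bff E_t^{i+1}+(\dot{\bff m}^{i+\frac12}-I_h\dot{\bff m}^{i+\frac12})$. The first piece is bounded by \eqref{eq:basic-consistency-defect-bounds} and the second by \eqref{eq:Ih-H1-approx}, using $\partial_t\bff m\in L^\infty(\bb H^2)$.

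\emph{Second and third groups.} Split the products bilinearly, for instance
\[
R_h\overline{\bff m}^{i+\frac12}\times\dtt R_h\bff m^{i+1}-\bff m^{i+\frac12}\times\dot{\bff m}^{i+\frac12}
=\bff E_m^{i+\frac12}\times\dtt R_h\bff m^{i+1}+\bff m^{i+\frac12}\times\bff E_t^{i+1},
\]
and similarly with $\bff E_H^{i+\frac12}$ and $P_h\mathcal H_\lambda(\overline{\bff m}^{i+\frac12})$. Each piece is a product of an $\mathcal O(h+k^2)$ defect with a bounded coefficient, but $I_h$ acts on it, and this is the main obstacle. These products are only piecewise smooth, not in $\bb H^2$, so $I_h$ must be controlled in $\bb H^1$ via \eqref{eq:Ih-quasi-stability-H1}:
\[
\norm{I_h\bff w}{\bb H^1}\le C\norm{\bff w}{\bb H^1}+Ch\norm{\mathrm D_h^2\bff w}{\bb L^2}.
\]
\begin{itemize}
\item[(a)] For the $\bb H^1$ norm of a product of two functions, use H\"older's inequality in the form $\norm{\nabla(\bff u\times\bff v)}{\bb L^2}\le\norm{\nabla\bff u}{\bb L^2}\norm{\bff v}{\bb L^\infty}+\norm{\bff u}{\bb L^{2p/(p-2)}}\norm{\nabla\bff v}{\bb L^p}$. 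The coefficients $\dtt R_h\bff m^{i+1}$ and $P_h\mathcal H_\lambda(\overline{\bff m}^{i+\frac12})$ are bounded in $\bb W^{1,p}\hookrightarrow\bb L^\infty$ by \eqref{eq:coefficient-W1p-bounds}, since $p>d$. The condition $d(\frac12-\frac1p)\le1$ gives the embedding $\bb H^1\hookrightarrow\bb L^{2p/(p-2)}$, which handles the defect factor.
\item[(b)] For the elementwise Hessian term, a product of two $P_1$ functions on one element has $\mathrm D^2(\bff u\times\bff v)\sim\nabla\bff u\otimes\nabla\bff v$. Bound it by $\norm{\nabla\bff E}{\bb L^2}\norm{\nabla(\cdot)}{\bb L^\infty}$, where the $\bb L^\infty$ norm of the discrete gradient is estimated through the inverse inequality \eqref{eq:inverse-estimate} from $\bb W^{1,p}$. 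This costs $h^{-d/p}$, which the prefactor $h$ absorbs because $p>d$.
\item[(c)] Pieces where a smooth factor $\bff m^{i+\frac12}$ multiplies a discrete defect are handled the same way, now using $\bff m\in\bb W^{2,p}$ and the bound $\norm{\mathrm D^2\bff m}{\bb L^\infty}$.
\end{itemize}

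Collecting the three groups gives $\norm{\bff\rho_h^{i+1}}{\bb H^1}\le C(h+k^2)$, which is \eqref{eq:rho-estimate}.
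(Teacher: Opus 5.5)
Your proof is correct. The identity and the bound for the first group match the paper's argument. For the two cross-product groups you take a different route. The paper never applies $I_h$ to a product containing the continuous defects. Inside $I_h$ it replaces $\bff E_m^{i+\frac12},\bff E_t^{i+1},\bff E_H^{i+\frac12}$ and $\bff m^{i+\frac12}$ by finite element functions with the same nodal values: $\delta_m^{i+\frac12}=R_h\overline{\bff m}^{i+\frac12}-I_h\bff m^{i+\frac12}$, $\delta_t^{i+1}=\dtt R_h\bff m^{i+1}-I_h\dot{\bff m}^{i+\frac12}$, $\delta_H^{i+\frac12}=P_h\mathcal H_\lambda(\overline{\bff m}^{i+\frac12})-I_h\mathcal H_\lambda(\bff m^{i+\frac12})$, and $I_h\bff m^{i+\frac12}$. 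Every term then becomes $I_h$ of a product of two $\bb V_h$ functions, so the discrete product estimate \eqref{eq:Ih-product-H1} of Lemma~\ref{lem:discrete-product-estimates} applies directly, with $\norm{I_h\bff m^{i+\frac12}}{\bb W^{1,p}}$ controlled by \eqref{eq:Ih-W1p-approx}. You instead apply the quasi-stability bound \eqref{eq:Ih-quasi-stability-H1} to the piecewise smooth products themselves. The paper records that estimate in the preliminaries but does not use it here.

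The paper's nodal-replacement trick buys economy. The elementwise Hessian of a product of two affine functions is only the mixed first-derivative term. The same lemma is also the tool for $J_4$ and $J_5$ in Theorem~\ref{thm:energy-error-estimate}. Your route skips the replacement step, at the price of tracking the full elementwise Hessian. Both routes need essentially the same regularity: $\bff m\in\bb H^4\cap\bb W^{3,p}$ and $\partial_t\bff m\in\bb H^2\cap\bb W^{1,p}$.

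Some bookkeeping needs fixing, though none of it breaks the argument.
\begin{itemize}
\item The defects are not piecewise affine. On each $K$ one has $\mathrm D^2\bff E_m^{i+\frac12}=-\mathrm D^2\bff m^{i+\frac12}$, $\mathrm D^2\bff E_t^{i+1}=-\mathrm D^2\dot{\bff m}^{i+\frac12}$ and $\mathrm D^2\bff E_H^{i+\frac12}=-\mathrm D^2\mathcal H_\lambda(\bff m^{i+\frac12})$.
\item Item (b) therefore omits terms such as $h\norm{\mathrm D^2\bff m^{i+\frac12}\times\dtt R_h\bff m^{i+1}}{\bb L^2}$. Item (c) omits terms such as $h\norm{\bff m^{i+\frac12}\times\mathrm D^2\mathcal H_\lambda(\bff m^{i+\frac12})}{\bb L^2}$. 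Both are $O(h)$ thanks to the prefactor $h$, the latter using $\bff m\in\bb H^4$, but they should be listed.
\item The embedding $\bb H^1\hookrightarrow\bb L^{2p/(p-2)}$ comes from $p\ge d$; for $d=3$, $2p/(p-2)\le 6\iff p\ge 3$. It does not come from $d(\frac12-\frac1p)\le 1$, which is the opposite restriction $p\le 6$.
\item Bounding $\norm{\mathrm D^2\bff m}{\bb L^\infty}$ uses the assumed $\bb W^{3,p}$ regularity, not merely $\bb W^{2,p}$. Alternatively, use H\"older with $\norm{\mathrm D^2\bff m}{\bb L^p}$ instead.
\end{itemize}
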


\begin{proof}
	The exact solution satisfies the pointwise identity
	\[
	\dot{\bff m}^{i+\frac12}
	-
	\alpha\bff m^{i+\frac12}\times\dot{\bff m}^{i+\frac12}
	+
	\gamma\bff m^{i+\frac12}\times
	\mathcal H_\lambda(\bff m^{i+\frac12})
	=
	\bff0.
	\] 
	Applying $I_h$ and testing with $\bff\phi_h\in\bb V_h$ in the mass-lumped product gives
	\begin{align}\label{eq:exact-nodal-residual}
		&\inpro{I_h\dot{\bff m}^{i+\frac12}}{\bff\phi_h}_h
		-
		\alpha
		\inpro{
			I_h(\bff m^{i+\frac12}\times\dot{\bff m}^{i+\frac12})
		}{
			\bff\phi_h
		}_h
		+
		\gamma
		\inpro{
			I_h(\bff m^{i+\frac12}\times
			\mathcal H_\lambda(\bff m^{i+\frac12}))
		}{
			\bff\phi_h
		}_h
		=
		0.
	\end{align}
	Subtracting \eqref{eq:exact-nodal-residual} from the left-hand side of \eqref{eq:projected-exact-equation}, and using \eqref{eq:def-DMI-Ritz-projection-field} gives precisely the identity \eqref{eq:def-rho-consistency}.
	
	It remains to prove \eqref{eq:rho-estimate}. From \eqref{eq:def-rho-consistency}, by adding and subtracting the relevant terms, we can write
	\[
	\bff\rho_h^{i+1}
	=
	\bff\rho_{0,h}^{i+1}
	-
	\alpha\bff\rho_{1,h}^{i+1}
	+
	\gamma\bff\rho_{2,h}^{i+1},
	\]
	where, using the notations in \eqref{eq:def-consistency-defects} and \eqref{eq:def-field-defect},
	\begin{align*}
	\bff\rho_{0,h}^{i+1}
	&:=
	\bff E_t^{i+1}
	+
	\dot{\bff m}^{i+\frac12}
	-
	I_h\dot{\bff m}^{i+\frac12},
	\\
	\bff\rho_{1,h}^{i+1}
	&:=
	I_h
	\left(
	\bff E_m^{i+\frac12}
	\times
	\dtt R_h\bff m^{i+1}
	+
	\bff m^{i+\frac12}
	\times
	\bff E_t^{i+1}
	\right),
	\\
	\bff\rho_{2,h}^{i+1}
	&:=
	I_h
	\left(
	\bff E_m^{i+\frac12}
	\times
	P_h\mathcal H_\lambda(\overline{\bff m}^{i+\frac12})
	+
	\bff m^{i+\frac12}
	\times
	\bff E_H^{i+\frac12}
	\right).
	\end{align*}
	
	For the term $\bff\rho_{0,h}^{i+1}$, we have by \eqref{eq:basic-consistency-defect-bounds} and \eqref{eq:Ih-H1-approx},
	\[
	\norm{\bff\rho_{0,h}^{i+1}}{\bb H^1}
	\le
	\norm{\bff E_t^{i+1}}{\bb{H}^1} + \norm{\dot{\bff m}^{i+\frac12}- I_h\dot{\bff m}^{i+\frac12}}{\bb{H}^1}
	\le
	C(h+k^2).
	\]

	Next, with the aim of estimating $\bff\rho_{1,h}^{i+1}$ and $\bff\rho_{2,h}^{i+1}$, we set
	\begin{align*}
	\delta_m^{i+\frac12}
	&:=
	R_h\overline{\bff m}^{i+\frac12}
	-
	I_h\bff m^{i+\frac12},
	\\
	\delta_t^{i+1}
	&:=
	\dtt R_h\bff m^{i+1}
	-
	I_h\dot{\bff m}^{i+\frac12},
	\\
	\delta_H^{i+\frac12}
	&:=
	P_h\mathcal H_\lambda(\overline{\bff m}^{i+\frac12})
	-
	I_h\mathcal H_\lambda(\bff m^{i+\frac12}).
	\end{align*}
	Then $\delta_m^{i+\frac12},\delta_t^{i+1},\delta_H^{i+\frac12}\in\bb V_h$ and, by
	\eqref{eq:basic-consistency-defect-bounds} and the interpolation estimates \eqref{eq:Ih-H1-approx} and \eqref{eq:Ph-W2p-approx},
	\begin{equation}\label{eq:delta-est}
	\norm{\delta_m^{i+\frac12}}{\bb H^1}
	+
	\norm{\delta_t^{i+1}}{\bb H^1}
	+
	\norm{\delta_H^{i+\frac12}}{\bb H^1}
	\le
	C(h+k^2).
	\end{equation}
	Moreover, thanks to the fact that $I_h\bff{v}-\bff{v}$ vanishes at all nodes for any sufficiently smooth $\bff{v}$, we observe that
	\[
	I_h\left(
	\bff E_m^{i+\frac12}\times \dtt R_h\bff m^{i+1}
	\right)
	=
	I_h\left(
	\delta_m^{i+\frac12}\times \dtt R_h\bff m^{i+1}
	\right),
	\]
	\[
	I_h\left(
	\bff m^{i+\frac12}\times \bff E_t^{i+1}
	\right)
	=
	I_h\left(
	I_h\bff m^{i+\frac12}\times \delta_t^{i+1}
	\right),
	\]
	and similarly
	\[
	I_h\left(
	\bff E_m^{i+\frac12}\times
	P_h\mathcal H_\lambda(\overline{\bff m}^{i+\frac12})
	\right)
	=
	I_h\left(
	\delta_m^{i+\frac12}\times
	P_h\mathcal H_\lambda(\overline{\bff m}^{i+\frac12})
	\right),
	\]
	\[
	I_h\left(
	\bff m^{i+\frac12}\times \bff E_H^{i+\frac12}
	\right)
	=
	I_h\left(
	I_h\bff m^{i+\frac12}\times \delta_H^{i+\frac12}
	\right).
	\]
	Therefore, by Lemma~\ref{lem:discrete-product-estimates} and \eqref{eq:delta-est}, as well as the bound
	\eqref{eq:coefficient-W1p-bounds} and the estimate \eqref{eq:Ih-W1p-approx}, we obtain
	\[
	\begin{aligned}
		\norm{\bff\rho_{1,h}^{i+1}}{\bb H^1}
		&\le
		C\norm{\delta_m^{i+\frac12}}{\bb H^1}
		\norm{\dtt R_h\bff m^{i+1}}{\bb W^{1,p}}
		+
		C\norm{\delta_t^{i+1}}{\bb H^1}
		\norm{I_h\bff m^{i+\frac12}}{\bb W^{1,p}}     
		\\
		&\le
		C(h+k^2).
	\end{aligned}
	\]
	Similar estimate also holds for $\bff\rho_{2,h}^{i+1}$.
	Hence, \eqref{eq:rho-estimate} follows.
\end{proof}


In the error analysis, it is convenient to compare the numerical solution not directly with $\bff m^i$, but with its elliptic projection $R_h\bff m^i$. Thus we introduce
the discrete errors:
\begin{equation}\label{eq:def-error}
	\bff e_h^i
	:=
	\bff m_h^i-R_h\bff m^i,
	\qquad
	\overline{\bff e}_h^{i+\frac12}
	:=
	\overline{\bff m}_h^{i+\frac12}
	-
	R_h\overline{\bff m}^{i+\frac12},
	\qquad
	\dtt\bff e_h^{i+1}
	:=
	\dtt\bff m_h^{i+1}
	-
	\dtt R_h\bff m^{i+1}.
\end{equation}
Notice that $\bff e_h^i$ is not the full error. Instead,
\begin{equation}\label{eq:error-splitting-total}
	\bff m_h^i-\bff m^i
	=
	\bff e_h^i
	+
	(R_h\bff m^i-\bff m^i),
\end{equation}
where the second term is controlled by the elliptic projection estimate in
Lemma~\ref{lem:DMI-Ritz-H1}. Hence it remains to estimate $\bff e_h^i$ in a
suitable energy norm.

We now record the discrete error equation in the following lemma, obtained by subtracting the projected exact equation
\eqref{eq:projected-exact-equation} from the numerical scheme
\eqref{eq:midpoint-scheme}. Its particular form is chosen so that, when tested
with $\dtt\bff e_h^{i+1}$ and
$-\frac{\gamma}{\alpha}\mathcal H_{h,\lambda}\overline{\bff e}_h^{i+\frac12}$,
the leading precession terms cancel and the increment of
$\mathfrak a_{h,\lambda}(\bff e_h^i,\bff e_h^i)$ appears.

\begin{lemma}[Discrete error equation]\label{lem:error-equation}
	For every $\bff\phi_h\in\bb V_h$, the discrete error defined in \eqref{eq:def-error} satisfies
	\begin{align}
		&\inpro{\dtt\bff e_h^{i+1}}{\bff\phi_h}_h
		-
		\alpha
		\inpro{
			\overline{\bff m}_h^{i+\frac12}
			\times
			\dtt\bff e_h^{i+1}
		}{
			\bff\phi_h
		}_h
		+
		\gamma
		\inpro{
			\overline{\bff m}_h^{i+\frac12}
			\times
			\mathcal H_{h,\lambda}\overline{\bff e}_h^{i+\frac12}
		}{
			\bff\phi_h
		}_h
		\notag
		\\
		&=
		\alpha
		\inpro{
			\overline{\bff e}_h^{i+\frac12}
			\times
			\dtt R_h\bff m^{i+1}
		}{
			\bff\phi_h
		}_h
		-
		\gamma
		\inpro{
			\overline{\bff e}_h^{i+\frac12}
			\times
			P_h\mathcal H_\lambda(\overline{\bff m}^{i+\frac12})
		}{
			\bff\phi_h
		}_h
		-
		\inpro{\bff\rho_h^{i+1}}{\bff\phi_h}_h,
		\label{eq:error-equation}
	\end{align}
	where $\bff{\rho}_h^{i+1}$ is the consistency residual term given in \eqref{eq:def-rho-consistency}.
\end{lemma}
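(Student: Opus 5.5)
The identity \eqref{eq:error-equation} is purely algebraic, so the plan is to subtract \eqref{eq:projected-exact-equation} from \eqref{eq:midpoint-scheme} and regroup the nonlinear terms using bilinearity of the cross product. Both equations hold for every $\bff\phi_h\in\bb V_h$ with the same mass-lumped inner product. We subtract them term by term. The first terms give $\inpro{\dtt\bff e_h^{i+1}}{\bff\phi_h}_h$ directly by the definition \eqref{eq:def-error}. On the right-hand side we get $0-\inpro{\bff\rho_h^{i+1}}{\bff\phi_h}_h$, which accounts for the last term of \eqref{eq:error-equation}.

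For the Gilbert term, write $\bff a\times\bff b-\bff c\times\bff d=\bff a\times(\bff b-\bff d)+(\bff a-\bff c)\times\bff d$ with $\bff a=\overline{\bff m}_h^{i+\frac12}$, $\bff b=\dtt\bff m_h^{i+1}$, $\bff c=R_h\overline{\bff m}^{i+\frac12}$ and $\bff d=\dtt R_h\bff m^{i+1}$. We use linearity of $R_h$ (it is defined through the linear problem \eqref{eq:def-DMI-Ritz-projection-weak}), so that $R_h\overline{\bff m}^{i+\frac12}=\frac12(R_h\bff m^{i+1}+R_h\bff m^i)$ and hence $\overline{\bff m}_h^{i+\frac12}-R_h\overline{\bff m}^{i+\frac12}=\overline{\bff e}_h^{i+\frac12}$. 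This gives
\[
-\alpha\inpro{\overline{\bff m}_h^{i+\frac12}\times\dtt\bff e_h^{i+1}}{\bff\phi_h}_h-\alpha\inpro{\overline{\bff e}_h^{i+\frac12}\times\dtt R_h\bff m^{i+1}}{\bff\phi_h}_h .
\]
The second piece is moved to the right-hand side, which produces the first term there.

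For the precession term we apply the same splitting with $\bff b=\mathcal H_{h,\lambda}\overline{\bff m}_h^{i+\frac12}$ and $\bff d=\mathcal H_{h,\lambda}R_h\overline{\bff m}^{i+\frac12}$. Since $\mathcal H_{h,\lambda}$ is linear, $\bff b-\bff d=\mathcal H_{h,\lambda}\overline{\bff e}_h^{i+\frac12}$. By \eqref{eq:def-DMI-Ritz-projection-field}, applied to $\overline{\bff m}^{i+\frac12}$, we have $\bff d=P_h\mathcal H_\lambda(\overline{\bff m}^{i+\frac12})$. This requires $\overline{\bff m}^{i+\frac12}$ to be admissible, i.e.\ in $\bb H^2$ and satisfying the chiral boundary condition. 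That holds because the condition \eqref{eq:chiral-boundary-condition} is linear in $\bff v$ and is satisfied by both $\bff m^i$ and $\bff m^{i+1}$ under Assumption~\ref{ass:exact-regularity}. The precession term therefore becomes
\[
\gamma\inpro{\overline{\bff m}_h^{i+\frac12}\times\mathcal H_{h,\lambda}\overline{\bff e}_h^{i+\frac12}}{\bff\phi_h}_h+\gamma\inpro{\overline{\bff e}_h^{i+\frac12}\times P_h\mathcal H_\lambda(\overline{\bff m}^{i+\frac12})}{\bff\phi_h}_h ,
\]
and moving the second piece to the right-hand side gives the remaining term of \eqref{eq:error-equation}.

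The only point that needs care is the consistency of the definitions just used. First, $R_h$ applied to the average $\overline{\bff m}^{i+\frac12}$ must coincide with the average of the $R_h\bff m^j$, which follows from linearity and uniqueness of the elliptic projection. Second, $\mathcal H_{h,\lambda}R_h\overline{\bff m}^{i+\frac12}$ must be replaced by $P_h\mathcal H_\lambda(\overline{\bff m}^{i+\frac12})$ consistently with how $\bff\rho_h^{i+1}$ was formed in Lemma~\ref{lem:projected-exact-consistency}. Once these are checked, the identity follows; no estimates are needed.
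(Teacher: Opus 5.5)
Your proposal is correct and is essentially the paper's proof: you subtract the projected exact equation from the scheme, split both cross-product differences as $\bff a\times\bff b-\bff c\times\bff d=\bff a\times(\bff b-\bff d)+(\bff a-\bff c)\times\bff d$, and then use linearity of $\mathcal H_{h,\lambda}$ together with $\mathcal H_{h,\lambda}R_h\overline{\bff m}^{i+\frac12}=P_h\mathcal H_\lambda(\overline{\bff m}^{i+\frac12})$. Your remarks on the linearity of $R_h$ and the admissibility of $\overline{\bff m}^{i+\frac12}$ are fine but not strictly needed, since the paper defines $\overline{\bff e}_h^{i+\frac12}$ directly as $\overline{\bff m}_h^{i+\frac12}-R_h\overline{\bff m}^{i+\frac12}$.
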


\begin{proof}
	We subtract the projected exact equation \eqref{eq:projected-exact-equation} from the numerical scheme \eqref{eq:midpoint-scheme}. For the damping term, using
	\[
	\overline{\bff m}_h^{i+\frac12}
	=
	R_h\overline{\bff m}^{i+\frac12}
	+
	\overline{\bff e}_h^{i+\frac12},
	\]
	we have
	\begin{align*}
		&\overline{\bff m}_h^{i+\frac12}\times
		\dtt\bff m_h^{i+1}
		-
		R_h\overline{\bff m}^{i+\frac12}
		\times
		\dtt R_h\bff m^{i+1}
		=
		\overline{\bff m}_h^{i+\frac12}
		\times
		\dtt\bff e_h^{i+1}
		+
		\overline{\bff e}_h^{i+\frac12}
		\times
		\dtt R_h\bff m^{i+1}.
	\end{align*}
	Similarly, using the linearity of $\mathcal H_{h,\lambda}$ and the defining identity \eqref{eq:def-DMI-Ritz-projection-field}, namely
	\[
	\mathcal H_{h,\lambda}R_h\overline{\bff m}^{i+\frac12}
	=
	P_h\mathcal H_\lambda(\overline{\bff m}^{i+\frac12}),
	\]
	we get
	\begin{align*}
		&\overline{\bff m}_h^{i+\frac12}
		\times
		\mathcal H_{h,\lambda}\overline{\bff m}_h^{i+\frac12}
		-
		R_h\overline{\bff m}^{i+\frac12}
		\times
		\mathcal H_{h,\lambda}R_h\overline{\bff m}^{i+\frac12}
		=
		\overline{\bff m}_h^{i+\frac12}
		\times
		\mathcal H_{h,\lambda}\overline{\bff e}_h^{i+\frac12}
		+
		\overline{\bff e}_h^{i+\frac12}
		\times
		P_h\mathcal H_\lambda(\overline{\bff m}^{i+\frac12}).
	\end{align*}
	Inserting these two identities into the subtracted equation gives \eqref{eq:error-equation}.
\end{proof}

We are now in a position to prove an error estimate in the energy norm for the ideal scheme given by Algorithm~\ref{alg:exact-fully-implicit-midpoint}.

\begin{theorem}[Energy-norm error estimate for the ideal scheme]\label{thm:energy-error-estimate}
	Let Assumption~\ref{ass:exact-regularity} hold, and let
	$\{\bff m_h^i\}_{i=0}^N$ be a sequence satisfying the ideal midpoint scheme (Algorithm~\ref{alg:exact-fully-implicit-midpoint}). Then, for $k>0$ sufficiently small,
	\begin{align}
		&\max_{0\le i\le N}
		\norm{\bff e_h^i}{\bb H^1}^2
		+
		k\sum_{i=0}^{N-1}
		\norm{\dtt\bff e_h^{i+1}}{h}^2
		\le
		C
		\left(
		\norm{\bff e_h^0}{\bb H^1}^2
		+
		(h+k^2)^2
		\right).
		\label{eq:error-estimate-eh}
	\end{align}
	Consequently, we have an \emph{a priori} error estimate
	\begin{equation}\label{eq:error-estimate-total}
		\max_{0\le i\le N}
		\norm{\bff m_h^i-\bff m^i}{\bb H^1}
		\le
		C
		\left(
		h+k^2
		\right),
	\end{equation}
	where $C$ is a constant depending on $T$, but is independent of $N$, $h$, and $k$.
\end{theorem}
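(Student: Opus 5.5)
We mimic the proof of the discrete energy law in Lemma~\ref{lem:scheme-stability}, applied to the error equation \eqref{eq:error-equation}. We test it with $\bff\phi_h=\dtt\bff e_h^{i+1}$ and with $\bff\phi_h=-\frac{\gamma}{\alpha}\mathcal H_{h,\lambda}\overline{\bff e}_h^{i+\frac12}$, and add the two identities. On the left-hand side, the damping term drops out in the first test, since $(\bff a\times\bff b)\cdot\bff b=0$ nodally. The mixed term $\gamma\inpro{\overline{\bff m}_h^{i+\frac12}\times\mathcal H_{h,\lambda}\overline{\bff e}_h^{i+\frac12}}{\dtt\bff e_h^{i+1}}_h$ cancels against the damping contribution of the second test. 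The term $-\frac{\gamma}{\alpha}\inpro{\dtt\bff e_h^{i+1}}{\mathcal H_{h,\lambda}\overline{\bff e}_h^{i+\frac12}}_h$ becomes $\frac{\gamma}{2\alpha k}[\mathfrak a_{h,\lambda}(\bff e_h^{i+1},\bff e_h^{i+1})-\mathfrak a_{h,\lambda}(\bff e_h^i,\bff e_h^i)]$ by \eqref{eq:def-discrete-shifted-H} and symmetry. The precession term in the second test vanishes identically. This yields
\begin{equation*}
\frac{\gamma}{2\alpha}\bigl[\mathfrak a_{h,\lambda}(\bff e_h^{i+1},\bff e_h^{i+1})-\mathfrak a_{h,\lambda}(\bff e_h^{i},\bff e_h^{i})\bigr]+k\norm{\dtt\bff e_h^{i+1}}{h}^2 = k\,(\text{RHS terms}).
\end{equation*}

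The right-hand side terms tested with $\dtt\bff e_h^{i+1}$ are straightforward. Using \eqref{eq:coefficient-W1p-bounds}, the Sobolev embedding $\bb W^{1,p}\subset\bb L^\infty$ (so that nodal values are bounded), \eqref{eq:lumped-L2-equivalence} and \eqref{eq:rho-estimate}, they are bounded by
\[
C\bigl(\norm{\overline{\bff e}_h^{i+\frac12}}{h}+\norm{\bff\rho_h^{i+1}}{h}\bigr)\norm{\dtt\bff e_h^{i+1}}{h}.
\]
Young's inequality then absorbs $\frac12\norm{\dtt\bff e_h^{i+1}}{h}^2$ into the left-hand side.

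The main obstacle is the set of terms tested with $\mathcal H_{h,\lambda}\overline{\bff e}_h^{i+\frac12}$, which is not controlled in $\bb L^2$ by the energy norm. The plan is to use \eqref{eq:def-discrete-shifted-H} to convert these products into $-\mathfrak a_{h,\lambda}(\cdot,\overline{\bff e}_h^{i+\frac12})$ applied to a finite element function. Concretely, we write
\[
\inpro{I_h(\overline{\bff e}_h^{i+\frac12}\times\bff w_h)}{\mathcal H_{h,\lambda}\overline{\bff e}_h^{i+\frac12}}_h=-\mathfrak a_{h,\lambda}\bigl(I_h(\overline{\bff e}_h^{i+\frac12}\times\bff w_h),\overline{\bff e}_h^{i+\frac12}\bigr),
\]
with $\bff w_h=\dtt R_h\bff m^{i+1}$ or $\bff w_h=P_h\mathcal H_\lambda(\overline{\bff m}^{i+\frac12})$. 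By continuity of $\mathfrak a_{h,\lambda}$, this is bounded by $C\norm{I_h(\overline{\bff e}_h^{i+\frac12}\times\bff w_h)}{\bb H^1}\norm{\overline{\bff e}_h^{i+\frac12}}{\bb H^1}$. The discrete product estimates of Lemma~\ref{lem:discrete-product-estimates} together with the $\bb W^{1,p}$ bounds \eqref{eq:coefficient-W1p-bounds} give
\[
\norm{I_h(\overline{\bff e}_h^{i+\frac12}\times\bff w_h)}{\bb H^1}\le C\norm{\overline{\bff e}_h^{i+\frac12}}{\bb H^1}.
\]
This is where $p>d$ and $d(\frac12-\frac1p)\le1$ enter. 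Similarly, the residual term gives $\mathfrak a_{h,\lambda}(\bff\rho_h^{i+1},\overline{\bff e}_h^{i+\frac12})\le C\norm{\bff\rho_h^{i+1}}{\bb H^1}\norm{\overline{\bff e}_h^{i+\frac12}}{\bb H^1}$, and this is exactly why \eqref{eq:rho-estimate} was proved in $\bb H^1$. No inverse estimate is needed, so no CFL condition arises.

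Summing over $i$ and using the coercivity \eqref{eq:shifted-coercivity} on the left, we arrive at
\[
\norm{\bff e_h^{n}}{\bb H^1}^2+k\sum_{i<n}\norm{\dtt\bff e_h^{i+1}}{h}^2\le C\norm{\bff e_h^0}{\bb H^1}^2+Ck\sum_{i\le n}\norm{\bff e_h^i}{\bb H^1}^2+C(h+k^2)^2.
\]
The index $i=n$ in the sum comes from $\overline{\bff e}_h^{n-\frac12}$. For $k$ small, this term is absorbed into the left-hand side, and the discrete Gronwall lemma gives \eqref{eq:error-estimate-eh}. Finally, for \eqref{eq:error-estimate-total} we use the splitting \eqref{eq:error-splitting-total}, Lemma~\ref{lem:DMI-Ritz-H1} for $R_h\bff m^i-\bff m^i$, and the bound
\[
\norm{\bff e_h^0}{\bb H^1}\le\norm{I_h\bff m^0-\bff m^0}{\bb H^1}+\norm{\bff m^0-R_h\bff m^0}{\bb H^1}\le Ch,
\]
which follows from \eqref{eq:Ih-H1-approx}.
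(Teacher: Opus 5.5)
Your proposal is correct and follows essentially the same route as the paper. It uses the same two test functions, the same cancellation producing the increment of $\mathfrak a_{h,\lambda}(\bff e_h^i,\bff e_h^i)$, the conversion of the terms tested with $\mathcal H_{h,\lambda}\overline{\bff e}_h^{i+\frac12}$ into $\mathfrak a_{h,\lambda}$-forms controlled by the $\bb W^{1,p}$ coefficient bounds and the $\bb H^1$ residual estimate \eqref{eq:rho-estimate}, and a discrete Gronwall argument; the only immaterial deviations are that you bound those terms by continuity of $\mathfrak a_{h,\lambda}$ together with \eqref{eq:Ih-product-H1} rather than via \eqref{eq:a-product-cancellation}, giving the same bound $C\norm{\overline{\bff e}_h^{i+\frac12}}{\bb H^1}^2$, and that you sum over $i$ before applying Gronwall.
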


\begin{proof}
	Fix $i\in\{0,\ldots,N-1\}$. Taking
	$\bff\phi_h=\dtt\bff e_h^{i+1}$ in the discrete error equation
	\eqref{eq:error-equation} gives
	\begin{align}
		&\norm{\dtt\bff e_h^{i+1}}{h}^2
		+
		\gamma
		\inpro{
			\overline{\bff m}_h^{i+\frac12}
			\times
			\mathcal H_{h,\lambda}\overline{\bff e}_h^{i+\frac12}
		}{
			\dtt\bff e_h^{i+1}
		}_h
		\notag
		\\
		&=
		\alpha
		\inpro{
			\overline{\bff e}_h^{i+\frac12}
			\times
			\dtt R_h\bff m^{i+1}
		}{
			\dtt\bff e_h^{i+1}
		}_h
		-
		\gamma
		\inpro{
			\overline{\bff e}_h^{i+\frac12}
			\times
			P_h\mathcal H_\lambda(\overline{\bff m}^{i+\frac12})
		}{
			\dtt\bff e_h^{i+1}
		}_h
		\notag
		\\
		&\quad
		-
		\inpro{\bff\rho_h^{i+1}}{\dtt\bff e_h^{i+1}}_h .
		\label{eq:error-test-dt-clean}
	\end{align}
	Next, taking $\bff\phi_h = -\frac{\gamma}{\alpha}
	\mathcal H_{h,\lambda}\overline{\bff e}_h^{i+\frac12}$ in \eqref{eq:error-equation}, we obtain
	\begin{align}
		&-\frac{\gamma}{\alpha}
		\inpro{
			\dtt\bff e_h^{i+1}
		}{
			\mathcal H_{h,\lambda}\overline{\bff e}_h^{i+\frac12}
		}_h
		-
		\gamma
		\inpro{
			\overline{\bff m}_h^{i+\frac12}
			\times
			\mathcal H_{h,\lambda}\overline{\bff e}_h^{i+\frac12}
		}{
			\dtt\bff e_h^{i+1}
		}_h
		\notag
		\\
		&=
		-\gamma
		\inpro{
			\overline{\bff e}_h^{i+\frac12}
			\times
			\dtt R_h\bff m^{i+1}
		}{
			\mathcal H_{h,\lambda}\overline{\bff e}_h^{i+\frac12}
		}_h
		\notag
		\\
		&\quad
		+
		\frac{\gamma^2}{\alpha}
		\inpro{
			\overline{\bff e}_h^{i+\frac12}
			\times
			P_h\mathcal H_\lambda(\overline{\bff m}^{i+\frac12})
		}{
			\mathcal H_{h,\lambda}\overline{\bff e}_h^{i+\frac12}
		}_h
		+
		\frac{\gamma}{\alpha}
		\inpro{
			\bff\rho_h^{i+1}
		}{
			\mathcal H_{h,\lambda}\overline{\bff e}_h^{i+\frac12}
		}_h .
		\label{eq:error-test-field-clean}
	\end{align}
	Adding \eqref{eq:error-test-dt-clean} and
	\eqref{eq:error-test-field-clean}, we observe that the terms containing $\inpro{
		\overline{\bff m}_h^{i+\frac12}
		\times
		\mathcal H_{h,\lambda}\overline{\bff e}_h^{i+\frac12}
	}{
		\dtt\bff e_h^{i+1}
	}_h$ cancel.
	Furthermore, note that by \eqref{eq:def-discrete-shifted-H},
	\begin{align}
		-\frac{\gamma}{\alpha}
		\inpro{
			\dtt\bff e_h^{i+1}
		}{
			\mathcal H_{h,\lambda}\overline{\bff e}_h^{i+\frac12}
		}_h
		&=
		\frac{\gamma}{\alpha}
		\mathfrak a_{h,\lambda}
		\left(
		\overline{\bff e}_h^{i+\frac12},
		\dtt\bff e_h^{i+1}
		\right)
		\notag
		\\
		&=
		\frac{\gamma}{2\alpha k}
		\left[
		\mathfrak a_{h,\lambda}(\bff e_h^{i+1},\bff e_h^{i+1})
		-
		\mathfrak a_{h,\lambda}(\bff e_h^i,\bff e_h^i)
		\right].
		\label{eq:error-energy-increment-clean}
	\end{align}
 Therefore, after adding \eqref{eq:error-test-dt-clean} and
 \eqref{eq:error-test-field-clean}, we have
	\begin{align}
		&\norm{\dtt\bff e_h^{i+1}}{h}^2
		+
		\frac{\gamma}{2\alpha k}
		\left[
		\mathfrak a_{h,\lambda}(\bff e_h^{i+1},\bff e_h^{i+1})
		-
		\mathfrak a_{h,\lambda}(\bff e_h^i,\bff e_h^i)
		\right]
		\notag
		\\
		&=
		\alpha
		\inpro{
			\overline{\bff e}_h^{i+\frac12}
			\times
			\dtt R_h\bff m^{i+1}
		}{
			\dtt\bff e_h^{i+1}
		}_h
		-
		\gamma
		\inpro{
			\overline{\bff e}_h^{i+\frac12}
			\times
			P_h\mathcal H_\lambda(\overline{\bff m}^{i+\frac12})
		}{
			\dtt\bff e_h^{i+1}
		}_h
		-
		\inpro{\bff\rho_h^{i+1}}{\dtt\bff e_h^{i+1}}_h
		\notag
		\\
		&\quad
		-
		\gamma
		\inpro{
			\overline{\bff e}_h^{i+\frac12}
			\times
			\dtt R_h\bff m^{i+1}
		}{
			\mathcal H_{h,\lambda}\overline{\bff e}_h^{i+\frac12}
		}_h
		\notag
		\\
		&\quad
		+
		\frac{\gamma^2}{\alpha}
		\inpro{
			\overline{\bff e}_h^{i+\frac12}
			\times
			P_h\mathcal H_\lambda(\overline{\bff m}^{i+\frac12})
		}{
			\mathcal H_{h,\lambda}\overline{\bff e}_h^{i+\frac12}
		}_h
		+
		\frac{\gamma}{\alpha}
		\inpro{
			\bff\rho_h^{i+1}
		}{
			\mathcal H_{h,\lambda}\overline{\bff e}_h^{i+\frac12}
		}_h
		\notag
		\\
		&=: J_1+J_2+\ldots+J_6.
		\label{eq:combined-error-identity-clean}
	\end{align}
	
	We now estimate each term on the right-hand side of \eqref{eq:combined-error-identity-clean}. Firstly, by H\"older's inequality, the Sobolev embedding $\bb{W}^{1,p}\hookrightarrow \bb{L}^\infty$,
	the bound \eqref{eq:coefficient-W1p-bounds}, the norm equivalence \eqref{eq:lumped-L2-equivalence}, and Young's inequality, we obtain for any $\varepsilon>0$,
	\begin{align}\label{eq:dt-lower-order-est-clean}
		|J_1|+|J_2|
		&\le
		\alpha \norm{\overline{\bff e}_h^{i+\frac12}\times \dtt R_h\bff m^{i+1}}{h} \norm{\dtt\bff e_h^{i+1}}{h}
		+
		\gamma \norm{\overline{\bff e}_h^{i+\frac12}\times P_h\mathcal H_\lambda(\overline{\bff m}^{i+\frac12})}{h} \norm{\dtt\bff e_h^{i+1}}{h}
		\notag
		\\
		&\le
		C \norm{\overline{\bff e}_h^{i+\frac12}}{\bb{L}^2} \norm{\dtt R_h\bff m^{i+1}}{\bb{L}^\infty} \norm{\dtt\bff e_h^{i+1}}{h}
		+ 
		C \norm{\overline{\bff e}_h^{i+\frac12}}{\bb{L}^2} \norm{P_h\mathcal H_\lambda(\overline{\bff m}^{i+\frac12})}{\bb{L}^\infty} \norm{\dtt\bff e_h^{i+1}}{h}
		\notag
		\\
		&\le 
		\varepsilon\norm{\dtt\bff e_h^{i+1}}{h}^2
		+
		C_\varepsilon\norm{\overline{\bff e}_h^{i+\frac12}}{\bb L^2}^2.
	\end{align}
	For the term $J_3$, by \eqref{eq:rho-estimate} and Young's inequality,
	\begin{equation}\label{eq:rho-dt-est-clean}
		|J_3|
		\le
		\varepsilon\norm{\dtt\bff e_h^{i+1}}{h}^2
		+
		C_\varepsilon\norm{\bff\rho_h^{i+1}}{h}^2
		\leq
		\varepsilon\norm{\dtt\bff e_h^{i+1}}{h}^2
		+
		C_\varepsilon (h+k^2)^2.
	\end{equation}
	To estimate the terms $J_4$ and $J_5$,
	we note the following identity. Since
	\[
	\inpro{
		\overline{\bff e}_h^{i+\frac12}
		\times
		\bff z_h
	}{
		\mathcal H_{h,\lambda}\overline{\bff e}_h^{i+\frac12}
	}_h
	=
	\inpro{
		I_h
		\Big(
		\overline{\bff e}_h^{i+\frac12}
		\times
		\bff z_h
		\Big)
	}{
		\mathcal H_{h,\lambda}\overline{\bff e}_h^{i+\frac12}
	}_h, 
	\qquad \forall \bff{z}_h\in \bb{V}_h,
	\]
	we have by the definition of $\mathcal H_{h,\lambda}$ in \eqref{eq:def-discrete-shifted-H},
	\[
	\inpro{
		\overline{\bff e}_h^{i+\frac12}
		\times
		\bff z_h
	}{
		\mathcal H_{h,\lambda}\overline{\bff e}_h^{i+\frac12}
	}_h
	=
	-
	\mathfrak a_{h,\lambda}
	\left(
	\overline{\bff e}_h^{i+\frac12},
	I_h
	\Big(
	\overline{\bff e}_h^{i+\frac12}
	\times
	\bff z_h
	\Big)
	\right).
	\]
	Applying this identity with $\bff z_h=\dtt R_h\bff m^{i+1}$ and
	$\bff z_h=P_h\mathcal H_\lambda(\overline{\bff m}^{i+\frac12})$, then using the discrete product estimate \eqref{eq:a-product-cancellation}, we obtain
	\begin{align}\label{eq:F-product-est-clean}
		|J_4|+|J_5|
		&\le
		C \norm{\overline{\bff e}_h^{i+\frac12}}{\bb H^1}^2 \left(\norm{\dtt R_h\bff m^{i+1}}{\bb{W}^{1,p}} + \norm{P_h\mathcal H_\lambda(\overline{\bff m}^{i+\frac12})}{\bb{W}^{1,p}} \right)
		\le
		C \norm{\overline{\bff e}_h^{i+\frac12}}{\bb H^1}^2.
	\end{align}
	Finally, noting that
	\[
	\inpro{
		\bff\rho_h^{i+1}
	}{
		\mathcal H_{h,\lambda}\overline{\bff e}_h^{i+\frac12}
	}_h
	=
	-\mathfrak a_{h,\lambda}
	\left(
	\overline{\bff e}_h^{i+\frac12},
	\bff\rho_h^{i+1}
	\right),
	\]
	we have by \eqref{eq:rho-estimate},
	\begin{equation}\label{eq:rho-F-est-clean}
	|J_6|
	\le
		C\norm{\overline{\bff e}_h^{i+\frac12}}{\bb H^1}^2
		+
		C\norm{\bff\rho_h^{i+1}}{\bb H^1}^2
	\leq
	C\norm{\overline{\bff e}_h^{i+\frac12}}{\bb H^1}^2
	+
	C(h+k^2)^2.
	\end{equation}
	Now, we substitute these estimates back into \eqref{eq:combined-error-identity-clean}.
	Choosing $\varepsilon>0$ sufficiently small in
	\eqref{eq:dt-lower-order-est-clean} and \eqref{eq:rho-dt-est-clean} and rearranging the terms, we obtain
	\begin{align}\label{eq:one-step-error-clean}
		\frac{k}{2}\norm{\dtt\bff e_h^{i+1}}{h}^2
		+
		\frac{\gamma}{2\alpha}
		\left[
		\mathfrak a_{h,\lambda}(\bff e_h^{i+1},\bff e_h^{i+1})
		-
		\mathfrak a_{h,\lambda}(\bff e_h^i,\bff e_h^i)
		\right]
		&\le
		Ck\norm{\overline{\bff e}_h^{i+\frac12}}{\bb H^1}^2
		+
		Ck(h+k^2)^2.
	\end{align}
	By the coercivity of $\mathfrak a_{h,\lambda}$, we have the estimate
	\[
	\norm{\overline{\bff e}_h^{i+\frac12}}{\bb H^1}^2
	\le
	C
	\left[
	\mathfrak a_{h,\lambda}(\bff e_h^{i+1},\bff e_h^{i+1})
	+
	\mathfrak a_{h,\lambda}(\bff e_h^i,\bff e_h^i)
	\right].
	\]
	Therefore, \eqref{eq:one-step-error-clean} implies
	\begin{align}
		&\frac{k}{2}\norm{\dtt\bff e_h^{i+1}}{h}^2
		+
		\frac{\gamma}{2\alpha}
		\left[
		\mathfrak a_{h,\lambda}(\bff e_h^{i+1},\bff e_h^{i+1})
		-
		\mathfrak a_{h,\lambda}(\bff e_h^i,\bff e_h^i)
		\right]
		\notag
		\\
		&\le
		Ck
		\left[
		\mathfrak a_{h,\lambda}(\bff e_h^{i+1},\bff e_h^{i+1})
		+
		\mathfrak a_{h,\lambda}(\bff e_h^i,\bff e_h^i)
		\right]
		+
		Ck(h+k^2)^2.
		\label{eq:one-step-gro-clean}
	\end{align}
	For $k>0$ sufficiently small, the term
	$Ck \mathfrak a_{h,\lambda}(\bff e_h^{i+1},\bff e_h^{i+1})$
	on the right-hand side can be absorbed into the left-hand side. This yields
	\begin{align*}
		\frac{k}{2}\norm{\dtt\bff e_h^{i+1}}{h}^2
		+
		c
		\mathfrak a_{h,\lambda}(\bff e_h^{i+1},\bff e_h^{i+1})
		\le
		(1+Ck)
		\mathfrak a_{h,\lambda}(\bff e_h^i,\bff e_h^i)
		+
		Ck(h+k^2)^2,
	\end{align*}
	where $c>0$ is independent of $h$ and $k$. In particular, we have
	\begin{equation}\label{eq:energy-recursion-clean}
		\mathfrak a_{h,\lambda}(\bff e_h^{i+1},\bff e_h^{i+1})
		-
		\mathfrak a_{h,\lambda}(\bff e_h^i,\bff e_h^i)
		\le
		Ck
		\mathfrak a_{h,\lambda}(\bff e_h^i,\bff e_h^i)
		+
		Ck(h+k^2)^2.
	\end{equation}
	Applying the discrete Gronwall lemma to \eqref{eq:energy-recursion-clean}
	yields
	\begin{equation}\label{eq:energy-uniform-bound-clean}
		\max_{0\le i\le N}
		\mathfrak a_{h,\lambda}(\bff e_h^i,\bff e_h^i)
		\le
		C
		\left[
		\mathfrak a_{h,\lambda}(\bff e_h^0,\bff e_h^0)
		+
		(h+k^2)^2
		\right].
	\end{equation}
	
	To estimate the accumulated time-difference term, we return to
	\eqref{eq:one-step-gro-clean} and sum it over $i=0,1,\ldots,j-1$. The energy
	increments telescope, and thus by applying \eqref{eq:energy-uniform-bound-clean} and $jk\le T$,
	\begin{align}\label{eq:telescoping-clean}
		&\frac{k}{2}
		\sum_{i=0}^{j-1}
		\norm{\dtt\bff e_h^{i+1}}{h}^2
		+
		\frac{\gamma}{2\alpha}
		\left[
		\mathfrak a_{h,\lambda}(\bff e_h^j,\bff e_h^j)
		-
		\mathfrak a_{h,\lambda}(\bff e_h^0,\bff e_h^0)
		\right]
		\notag
		\\
		&\le
		Ck
		\sum_{i=0}^{j-1}
		\left[
		\mathfrak a_{h,\lambda}(\bff e_h^{i+1},\bff e_h^{i+1})
		+
		\mathfrak a_{h,\lambda}(\bff e_h^i,\bff e_h^i)
		\right]
		+
		Cjk(h+k^2)^2
		\notag
		\\
		&\le
		C
		\left[
		\mathfrak a_{h,\lambda}(\bff e_h^0,\bff e_h^0)
		+
		(h+k^2)^2
		\right].
	\end{align}
	Dropping the nonnegative term
	$\mathfrak a_{h,\lambda}(\bff e_h^j,\bff e_h^j)$ from the left-hand side of
	\eqref{eq:telescoping-clean}, we conclude that
	\begin{equation}\label{eq:dt-error-sum-bound-clean}
		k
		\sum_{i=0}^{j-1}
		\norm{\dtt\bff e_h^{i+1}}{h}^2
		\le
		C
		\left[
		\mathfrak a_{h,\lambda}(\bff e_h^0,\bff e_h^0)
		+
		(h+k^2)^2
		\right].
	\end{equation}
	Combining \eqref{eq:energy-uniform-bound-clean} and
	\eqref{eq:dt-error-sum-bound-clean}, and again utilising the equivalence between
	$\mathfrak a_{h,\lambda}(\cdot,\cdot)$ and $\norm{\cdot}{\bb H^1}^2$ on
	$\bb V_h$, proves \eqref{eq:error-estimate-eh}.
	
	Finally, recall that we set $\bff{m}_h^0=I_h \bff{m}^0$. By the approximation properties of $I_h$ and $R_h$ in \eqref{eq:Ih-H1-approx} and \eqref{eq:DMI-Ritz-H1-est}, we have
	\begin{align}\label{eq:eh0-est}
	\norm{\bff{e}_h^0}{\bb{H}^1}^2
		\leq
		C \norm{\bff{m}^0-I_h \bff{m}^0}{\bb{H}^1}^2 + C \norm{\bff{m}^0-R_h \bff{m}^0}{\bb{H}^1}^2
		\leq
		Ch^2.
	\end{align}
	This, together with \eqref{eq:error-estimate-eh}, the error decomposition \eqref{eq:error-splitting-total}, and \eqref{eq:DMI-Ritz-H1-est}, yields \eqref{eq:error-estimate-total}.
\end{proof}

\section{The inexact midpoint scheme and its error analysis}
\label{sec:inexact-solver}

At each time step, the ideal midpoint scheme \eqref{eq:midpoint-scheme} requires the solution of a nonlinear algebraic system. A generic nonlinear solver need not preserve the geometric structure or the energy stability inherited from the midpoint discretisation. We therefore introduce in Algorithm~\ref{alg:constraint-preserving-fixed-point} a constraint-preserving fixed-point iteration, formulated in terms of the midpoint unknown
\[
\bff{\eta}_h^{i+\frac12}
:=
\overline{\bff{m}}_h^{i+\frac12}
=
\frac12(\bff{m}_h^{i+1}+\bff{m}_h^i),
\]
which is designed to preserve the nodal unit-length constraint and to satisfy an inexact discrete energy law.

Since
\[
\dtt\bff{m}_h^{i+1}
=
\frac{2}{k}
\left(
\bff{\eta}_h^{i+\frac12}-\bff{m}_h^i
\right),
\]
the exact midpoint scheme \eqref{eq:midpoint-scheme} is equivalent to finding
$\bff{\eta}_h^{i+\frac12}\in\bb V_h$ such that, for all $\bff{\phi}_h\in\bb V_h$,
\begin{align}\label{eq:midpoint-eta-form}
	&\inpro{\bff{\eta}_h^{i+\frac12}}{\bff{\phi}_h}_h
	+
	\alpha
	\inpro{
		\bff{\eta}_h^{i+\frac12}\times\bff{m}_h^i
	}{
		\bff{\phi}_h
	}_h
	+
	\frac{\gamma k}{2}
	\inpro{
		\bff{\eta}_h^{i+\frac12}
		\times
		\mathcal H_{h,\lambda}\bff{\eta}_h^{i+\frac12}
	}{
		\bff{\phi}_h
	}_h
	=
	\inpro{\bff{m}_h^i}{\bff{\phi}_h}_h.
\end{align}
After that, we set
\[
\bff{m}_h^{i+1}
=
2\bff{\eta}_h^{i+\frac12}-\bff{m}_h^i.
\]

Inspired by \cite{Cim09, FraPfePraRug23}, we now define a practical inexact solver to solve \eqref{eq:midpoint-eta-form} in the following algorithm. Recall that $\mathcal{M}_h$ is the set defined by \eqref{eq:Mh}.

\begin{algorithm}[Constraint-preserving inexact midpoint solver]
	\label{alg:constraint-preserving-fixed-point}
	Set $\bff{m}_{h,\varepsilon}^0:= \bff{m}_h^0= I_h \bff{m}^0$.
	\\
	\textbf{For} $i=0$ to $N-1$, given $\bff{m}_{h,\varepsilon}^i\in\mathcal M_h$ and a prescribed tolerance $\varepsilon_{i+1}>0$, \textbf{do}:
	\begin{enumerate}
		\item Set
		\[
		\bff{\eta}_{h,\varepsilon}^{i,0}
		:=
		\bff{m}_{h,\varepsilon}^i.
		\]
		
		\item For $\ell=0,1,2,\ldots$, given
		$\bff{\eta}_{h,\varepsilon}^{i,\ell}\in\bb V_h$, compute
		$\bff{\eta}_{h,\varepsilon}^{i,\ell+1}\in\bb V_h$ such that, for all $\bff{\phi}_h\in\bb V_h$,
		\begin{align}
			&\inpro{\bff{\eta}_{h,\varepsilon}^{i,\ell+1}}{\bff{\phi}_h}_h
			+
			\alpha
			\inpro{
				\bff{\eta}_{h,\varepsilon}^{i,\ell+1}
				\times
				\bff{m}_{h,\varepsilon}^i
			}{
				\bff{\phi}_h
			}_h
			+
			\frac{\gamma k}{2}
			\inpro{
				\bff{\eta}_{h,\varepsilon}^{i,\ell+1}
				\times
				\mathcal H_{h,\lambda}
				\bff{\eta}_{h,\varepsilon}^{i,\ell}
			}{
				\bff{\phi}_h
			}_h
			=
			\inpro{\bff{m}_{h,\varepsilon}^i}{\bff{\phi}_h}_h.
			\label{eq:constraint-preserving-fixed-point}
		\end{align}
		
		\item Stop at the first index $\ell_\ast\ge0$ satisfying
		\begin{equation}\label{eq:constraint-preserving-stopping}
			\norm{
				I_h
				\left[
				\bff{\eta}_{h,\varepsilon}^{i,\ell_\ast+1}
				\times
				\mathcal H_{h,\lambda}
				\left(
				\bff{\eta}_{h,\varepsilon}^{i,\ell_\ast+1}
				-
				\bff{\eta}_{h,\varepsilon}^{i,\ell_\ast}
				\right)
				\right]
			}{h}
			\le
			\varepsilon_{i+1}.
		\end{equation}
		
		\item Define
		\begin{equation}\label{eq:def-inexact-midpoint-update}
			\overline{\bff{m}}_{h,\varepsilon}^{i+\frac12}
			:=
			\bff{\eta}_{h,\varepsilon}^{i,\ell_\ast+1},
			\qquad
			\bff{m}_{h,\varepsilon}^{i+1}
			:=
			2\overline{\bff{m}}_{h,\varepsilon}^{i+\frac12}
			-
			\bff{m}_{h,\varepsilon}^i.
		\end{equation}
	\end{enumerate}
	\textbf{Output}: a sequence of approximations $\{\bff{m}_{h,\varepsilon}^i\}_{1\leq i\leq N}$.
\end{algorithm}

For the analysis, we also define the defect quantities:
\begin{equation}\label{eq:def-ri-si}
	\bff{r}_{h,\varepsilon}^{i+1}
	:=
	\mathcal H_{h,\lambda}
	\left(
	\bff{\eta}_{h,\varepsilon}^{i,\ell_\ast+1}
	-
	\bff{\eta}_{h,\varepsilon}^{i,\ell_\ast}
	\right),
	\qquad
	\bff{s}_{h,\varepsilon}^{i+1}
	:=
	I_h
	\left[
	\overline{\bff{m}}_{h,\varepsilon}^{i+\frac12}
	\times
	\bff{r}_{h,\varepsilon}^{i+1}
	\right].
\end{equation}
By the stopping criterion \eqref{eq:constraint-preserving-stopping} and the
definition \eqref{eq:def-inexact-midpoint-update}, we immediately have
\begin{equation}\label{eq:s-epsilon-bound}
	\norm{\bff{s}_{h,\varepsilon}^{i+1}}{h}
	\le
	\varepsilon_{i+1}.
\end{equation}

\begin{remark}\label{rem:shift-in-fixed-point}
	The shifted operator $\mathcal H_{h,\lambda}$ is still used here to align the
	fixed-point solver with the shifted formulation used in the error analysis in
	Section~\ref{sec:error-analysis}. The exact midpoint scheme is unchanged by
	this shift due to~\eqref{eq:shift-do-not-change}.
	
	At the level of the inexact inner iteration, however, the shift is not
	invisible. Indeed, $\bff\eta_h^{\ell+1}\times
	\lambda\bff\eta_h^\ell$
	does not vanish in general, since $\bff\eta_h^{\ell+1}$ and
	$\bff\eta_h^\ell$ need not coincide. Thus the value of $\lambda$ changes the
	algebraic path of the fixed-point iteration and enters the defect term in the
	inexact discrete energy law through
	\[
	\bff\eta_h^{\ell_\ast+1}
	\times
	\mathcal H_{h,\lambda}
	\left(
	\bff\eta_h^{\ell_\ast+1}
	-
	\bff\eta_h^{\ell_\ast}
	\right).
	\]
	In particular, the constants in the contraction and error estimates may
	depend on the fixed shift parameter $\lambda$, but remain independent of
	$h$ and $k$.
\end{remark}

We first verify that the inner iteration is well-defined and that the update
preserves the nodal unit-length constraint exactly. This is the key structural
advantage of the iteration defined by Algorithm~\ref{alg:constraint-preserving-fixed-point}.

\begin{lemma}[Well-posedness and nodal constraint of the inner iteration]
	\label{lem:constraint-preserving-inner-iteration}
	Assume that $\bff m_{h,\varepsilon}^i\in\mathcal M_h$.
	Then, for every $\ell\ge0$, the linear problem
	\eqref{eq:constraint-preserving-fixed-point} admits a unique solution
	$\bff\eta_{h,\varepsilon}^{i,\ell+1}\in\bb V_h$. Moreover,
	\begin{equation}\label{eq:eta-linfty-bound}
		\norm{\bff\eta_{h,\varepsilon}^{i,\ell+1}}{\bb L^\infty}
		\le 1.
	\end{equation}
	If the iteration is stopped according to
	\eqref{eq:constraint-preserving-stopping}, then the update
	\eqref{eq:def-inexact-midpoint-update} satisfies $\bff m_{h,\varepsilon}^{i+1}\in\mathcal M_h$.
\end{lemma}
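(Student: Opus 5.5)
The plan is to exploit that the mass-lumped inner product decouples \eqref{eq:constraint-preserving-fixed-point} into independent $3\times3$ nodal systems. Since the field $\mathcal H_{h,\lambda}\bff\eta_{h,\varepsilon}^{i,\ell}\in\bb V_h$ is already computed from the previous iterate, the equation is linear in $\bff\eta:=\bff\eta_{h,\varepsilon}^{i,\ell+1}$. Testing with $\bff\phi_h=\varphi_z\bff e_j$ and dividing by $\beta_z>0$ gives, at each node $z\in\mathcal N_h$,
\[
\bff\eta(z)+\bff\eta(z)\times\bff w_z=\bff m_{h,\varepsilon}^i(z),
\qquad
\bff w_z:=\alpha\,\bff m_{h,\varepsilon}^i(z)+\frac{\gamma k}{2}\,\mathcal H_{h,\lambda}\bff\eta_{h,\varepsilon}^{i,\ell}(z).
\]
Conversely, any nodal solution of these systems defines an element of $\bb V_h$ solving \eqref{eq:constraint-preserving-fixed-point}. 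Hence existence and uniqueness reduce to invertibility of the matrix $A_z:=\mathrm{Id}+[\,\cdot\times\bff w_z]$ for every node $z$.

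The map $\bff v\mapsto\bff v\times\bff w_z$ is skew-symmetric, so $A_z\bff v\cdot\bff v=|\bff v|^2$. Thus $A_z$ is injective, hence invertible, and the solution is unique. Dotting the nodal equation with $\bff\eta(z)$ gives $|\bff\eta(z)|^2=\bff m_{h,\varepsilon}^i(z)\cdot\bff\eta(z)\le|\bff\eta(z)|$, using $|\bff m_{h,\varepsilon}^i(z)|=1$ because $\bff m_{h,\varepsilon}^i\in\mathcal M_h$. Hence $|\bff\eta(z)|\le1$ at every node. Since a piecewise affine function is, on each element, a convex combination of its vertex values, this gives \eqref{eq:eta-linfty-bound}.

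For the final claim, I will use the identity $|\bff\eta(z)|^2=\bff m_{h,\varepsilon}^i(z)\cdot\bff\eta(z)$ just derived. With $\bff m_{h,\varepsilon}^{i+1}(z)=2\bff\eta(z)-\bff m_{h,\varepsilon}^i(z)$, where now $\bff\eta=\bff\eta_{h,\varepsilon}^{i,\ell_\ast+1}$, we compute
\[
|\bff m_{h,\varepsilon}^{i+1}(z)|^2=4|\bff\eta(z)|^2-4\,\bff\eta(z)\cdot\bff m_{h,\varepsilon}^i(z)+|\bff m_{h,\varepsilon}^i(z)|^2=|\bff m_{h,\varepsilon}^i(z)|^2=1.
\]
This holds for every node, so $\bff m_{h,\varepsilon}^{i+1}\in\mathcal M_h$. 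It holds regardless of which index $\ell_\ast$ the stopping criterion \eqref{eq:constraint-preserving-stopping} selects, since the stopping rule plays no role here.

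The only point needing care is the first step. One must check that the lumped formulation is exactly equivalent to the nodal systems, including the nodal representation of the cross-product terms via $I_h$ inside $\inpro{\cdot}{\cdot}_h$. This follows directly from \eqref{eq:mass-lumped-inner-product}. Everything else is elementary algebra, valid with no restriction on $k$ or $h$.
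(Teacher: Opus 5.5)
Your proof is correct and follows essentially the same route as the paper: the nodal identity $|\bff\eta(z)|^2=\bff m_{h,\varepsilon}^i(z)\cdot\bff\eta(z)$, the convex-combination argument for \eqref{eq:eta-linfty-bound}, and the expansion of $|2\bff\eta(z)-\bff m_{h,\varepsilon}^i(z)|^2$ are exactly what the paper does. The only cosmetic difference is in well-posedness, where you invert the decoupled nodal matrices $\mathrm{Id}+[\,\cdot\times\bff w_z]$ while the paper invokes Lax--Milgram with coercivity in $\norm{\cdot}{h}$; both rest on the same vanishing of the cross-product terms when testing with the unknown.
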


\begin{proof}
	For fixed $\bff\eta_{h,\varepsilon}^{i,\ell}$, the left-hand side of
	\eqref{eq:constraint-preserving-fixed-point} defines a bilinear form in
	$\bff\eta_{h,\varepsilon}^{i,\ell+1}$ and $\bff\phi_h$. Taking
	$\bff\phi_h=\bff\eta_{h,\varepsilon}^{i,\ell+1}$, the cross-product terms vanish
	nodally.
	Therefore, the bilinear form is coercive with respect to $\norm{\cdot}{h}$,
	and the Lax--Milgram theorem yields existence and uniqueness.
	
	Next, let $z\in\mathcal N_h$. Testing
	\eqref{eq:constraint-preserving-fixed-point} with $\bff\phi_h
	=
	\varphi_z
	\bff\eta_{h,\varepsilon}^{i,\ell+1}(z)$ gives
	\[
	\beta_z
	|\bff\eta_{h,\varepsilon}^{i,\ell+1}(z)|^2
	=
	\beta_z
	\bff m_{h,\varepsilon}^{i}(z)
	\cdot
	\bff\eta_{h,\varepsilon}^{i,\ell+1}(z),
	\]
	Noting that $|\bff m_{h,\varepsilon}^i(z)|=1$, we obtain $\abs{\bff\eta_{h,\varepsilon}^{i,\ell+1}(z)} \le 1$ for all $z\in\mathcal N_h$.
	Since $\bff\eta_{h,\varepsilon}^{i,\ell+1}$ is affine on each element,
	\eqref{eq:eta-linfty-bound} follows.
	
	It remains to prove that the update belongs to $\mathcal M_h$. Since
	\[
	\bff m_{h,\varepsilon}^{i+1}(z)
	=
	2\bff\eta_{h,\varepsilon}^{i,\ell_\ast+1}(z)
	-
	\bff m_{h,\varepsilon}^i(z),
	\]
	we compute
	\begin{align*}
		|\bff m_{h,\varepsilon}^{i+1}(z)|^2
		&=
		4|\bff\eta_{h,\varepsilon}^{i,\ell_\ast+1}(z)|^2
		-
		4\bff\eta_{h,\varepsilon}^{i,\ell_\ast+1}(z)
		\cdot
		\bff m_{h,\varepsilon}^i(z)
		+
		|\bff m_{h,\varepsilon}^i(z)|^2
		=
		|\bff m_{h,\varepsilon}^i(z)|^2
		=
		1.
	\end{align*}
	This proves $\bff m_{h,\varepsilon}^{i+1}\in\mathcal M_h$, thus completing the proof of the lemma.
\end{proof}

Before deriving the perturbed midpoint equation satisfied by the stopped iterate,
we first show that the inner fixed-point iteration is contractive under the usual
condition $k=O(h^2)$ with a sufficiently small constant. The proof follows a similar structure as the contraction argument for the constraint-preserving midpoint
iteration in \cite{BarPro06, Cim09, FraPfePraRug23}, adapted here to the shifted discrete field
$\mathcal H_{h,\lambda}$.

\begin{lemma}[Contraction of the inner iteration]
	\label{lem:constraint-preserving-contraction}
	There exist constants $h_0>0$, $c_{\mathrm{FL}}>0$, and $q\in(0,1)$,
	independent of $h,k,i,\ell$, such that, for all $h\in (0,h_0)$, if
	\begin{equation}\label{eq:inner-cfl-condition}
		k\le c_{\mathrm{FL}}h^2,
	\end{equation}
	then
	\begin{equation}\label{eq:inner-contraction}
		\norm{
			\bff\eta_{h,\varepsilon}^{i,\ell+2}
			-
			\bff\eta_{h,\varepsilon}^{i,\ell+1}
		}{h}
		\le
		q
		\norm{
			\bff\eta_{h,\varepsilon}^{i,\ell+1}
			-
			\bff\eta_{h,\varepsilon}^{i,\ell}
		}{h},
		\qquad
		\forall \ell\ge0.
	\end{equation}
	Consequently, for every $\varepsilon_{i+1}>0$, the stopping criterion
	\eqref{eq:constraint-preserving-stopping} is satisfied after finitely many
	iterations.
\end{lemma}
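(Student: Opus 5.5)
We subtract two consecutive instances of the linear problem \eqref{eq:constraint-preserving-fixed-point}, namely those for $\ell+1$ and $\ell$. Abbreviate $\bff\eta^{\ell}:=\bff\eta_{h,\varepsilon}^{i,\ell}$, $\bff m:=\bff m_{h,\varepsilon}^i$, and write $\bff\delta^{\ell+1}:=\bff\eta^{\ell+2}-\bff\eta^{\ell+1}$. The difference reads, for all $\bff\phi_h\in\bb V_h$,
\begin{align*}
\inpro{\bff\delta^{\ell+1}}{\bff\phi_h}_h
+\alpha\inpro{\bff\delta^{\ell+1}\times\bff m}{\bff\phi_h}_h
+\frac{\gamma k}{2}\inpro{\bff\delta^{\ell+1}\times\mathcal H_{h,\lambda}\bff\eta^{\ell+1}}{\bff\phi_h}_h
=-\frac{\gamma k}{2}\inpro{\bff\eta^{\ell+1}\times\mathcal H_{h,\lambda}\bff\delta^{\ell}}{\bff\phi_h}_h .
\end{align*}
This is obtained by adding and subtracting $\bff\eta^{\ell+1}\times\mathcal H_{h,\lambda}\bff\eta^{\ell+1}$ in the precession terms, which is legitimate since $\mathcal H_{h,\lambda}$ is linear.

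Next we test with $\bff\phi_h=\bff\delta^{\ell+1}$. Because the mass-lumped product is nodal, both cross-product terms on the left vanish, since $(\bff\delta\times\bff a)\cdot\bff\delta=0$ pointwise at each node. The left-hand side is therefore exactly $\norm{\bff\delta^{\ell+1}}{h}^2$. On the right we estimate nodally, using $|\bff\eta^{\ell+1}(z)|\le1$ from \eqref{eq:eta-linfty-bound}:
\[
\Big|\inpro{\bff\eta^{\ell+1}\times\mathcal H_{h,\lambda}\bff\delta^{\ell}}{\bff\delta^{\ell+1}}_h\Big|
\le \norm{\mathcal H_{h,\lambda}\bff\delta^{\ell}}{h}\norm{\bff\delta^{\ell+1}}{h}.
\]
This yields $\norm{\bff\delta^{\ell+1}}{h}\le \frac{\gamma k}{2}\norm{\mathcal H_{h,\lambda}\bff\delta^{\ell}}{h}$.

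The key step is an operator-norm bound $\norm{\mathcal H_{h,\lambda}\bff v_h}{h}\le C_{\rm inv}h^{-2}\norm{\bff v_h}{h}$ for all $\bff v_h\in\bb V_h$, with $C_{\rm inv}$ independent of $h$ (this is presumably the appendix-type estimate on the discrete effective field). To prove it, take $\bff\chi_h=\mathcal H_{h,\lambda}\bff v_h$ in \eqref{eq:def-discrete-shifted-H}. This gives
\[
\norm{\mathcal H_{h,\lambda}\bff v_h}{h}^2=-\mathfrak a_{h,\lambda}(\bff v_h,\bff\chi_h)\le C\norm{\bff v_h}{\bb H^1}\norm{\bff\chi_h}{\bb H^1}+\lambda\norm{\bff v_h}{h}\norm{\bff\chi_h}{h},
\]
using continuity of $\mathfrak a$ (exchange and DMI terms, with $\kdm$ entering only through first derivatives). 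By the inverse estimate \eqref{eq:inverse-estimate} with $s=0$ applied to gradients, i.e.\ $\norm{\nabla\bff w_h}{\bb L^2}\le Ch^{-1}\norm{\bff w_h}{\bb L^2}$, together with the norm equivalence \eqref{eq:lumped-L2-equivalence}, we get $\norm{\bff w_h}{\bb H^1}\le Ch^{-1}\norm{\bff w_h}{h}$ for $h<h_0$. The $\lambda$-term is absorbed as $\lambda\le \lambda h_0^2h^{-2}$. Dividing by $\norm{\bff\chi_h}{h}$ gives the claim.

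Combining the two bounds, we obtain
\[
\norm{\bff\delta^{\ell+1}}{h}\le \frac{\gamma C_{\rm inv}}{2}\,\frac{k}{h^2}\,\norm{\bff\delta^{\ell}}{h}.
\]
Under \eqref{eq:inner-cfl-condition} with $c_{\rm FL}:=1/(\gamma C_{\rm inv})$ this gives $q=1/2$, which proves \eqref{eq:inner-contraction}.

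For finite termination, the quantity in \eqref{eq:constraint-preserving-stopping} is bounded, via the nodal bound $|\bff\eta^{\ell+1}(z)|\le1$ and the inverse bound, by
\[
\norm{\mathcal H_{h,\lambda}(\bff\eta^{\ell+1}-\bff\eta^{\ell})}{h}\le C_{\rm inv}h^{-2}q^{\ell}\norm{\bff\eta^{1}-\bff\eta^{0}}{h}.
\]
Here $\norm{\bff\eta^1-\bff\eta^0}{h}\le 2|\Omega|^{1/2}$, since both functions have nodal values of modulus at most one. For fixed $h$ this tends to zero as $\ell\to\infty$, so the criterion is met after finitely many steps. The only delicate point is checking that the DMI part of $\mathfrak a$ and the $\lambda$-shift obey the same $h^{-2}$ inverse scaling, which is routine given quasi-uniformity.
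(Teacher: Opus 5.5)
Your proof is correct and follows the paper's argument essentially step for step. It uses the same difference equation, the same nodal cancellation of both cross-product terms when testing with $\bff\delta^{\ell+1}$, the same bound $\norm{\mathcal H_{h,\lambda}\bff v_h}{h}\le Ch^{-2}\norm{\bff v_h}{h}$, and the same geometric decay of the stopping quantity. The one difference is that you prove this operator bound yourself, by testing \eqref{eq:def-discrete-shifted-H} with $\mathcal H_{h,\lambda}\bff v_h$ and applying the standard quasi-uniform inverse inequality $\norm{\nabla\bff w_h}{\bb L^2}\le Ch^{-1}\norm{\bff w_h}{\bb L^2}$, whereas the paper cites the bound from Bartels--Prohl; note that this inverse inequality is not literally what \eqref{eq:inverse-estimate} states, though it is standard.
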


\begin{proof}
	Set $\bff\delta_h^\ell
	:=
	\bff\eta_{h,\varepsilon}^{i,\ell+1}
	-
	\bff\eta_{h,\varepsilon}^{i,\ell}$.
	Subtracting \eqref{eq:constraint-preserving-fixed-point} at the iterations
	$\ell+1$ and $\ell$ gives, for all $\bff\phi_h\in\bb V_h$,
	\begin{align}
		&\inpro{\bff\delta_h^{\ell+1}}{\bff\phi_h}_h
		+
		\alpha
		\inpro{
			\bff\delta_h^{\ell+1}
			\times
			\bff m_{h,\varepsilon}^i
		}{
			\bff\phi_h
		}_h
		+
		\frac{\gamma k}{2}
		\inpro{
			\bff\delta_h^{\ell+1}
			\times
			\mathcal H_{h,\lambda}
			\bff\eta_{h,\varepsilon}^{i,\ell+1}
		}{
			\bff\phi_h
		}_h
		\notag
		\\
		&\qquad
		=
		-
		\frac{\gamma k}{2}
		\inpro{
			\bff\eta_{h,\varepsilon}^{i,\ell+1}
			\times
			\mathcal H_{h,\lambda}
			\bff\delta_h^\ell
		}{
			\bff\phi_h
		}_h .
		\label{eq:inner-difference-equation}
	\end{align}
	Choosing $\bff\phi_h=\bff\delta_h^{\ell+1}$, the two cross-product terms on
	the left-hand side vanish nodally. Therefore,
	\[
	\norm{\bff\delta_h^{\ell+1}}{h}^2
	\le
	Ck
	\norm{
		\mathcal H_{h,\lambda}\bff\delta_h^\ell
	}{h}
	\norm{\bff\delta_h^{\ell+1}}{h},
	\]
	where we used the nodal bound \eqref{eq:eta-linfty-bound}.
	We next use the inverse estimate (see~\cite{BarPro06}):
	\begin{equation}\label{eq:Hh-inverse-hnorm}
		\norm{\mathcal H_{h,\lambda}\bff v_h}{h}
		\le
		Ch^{-2}\norm{\bff v_h}{h},
		\qquad
		\forall \bff v_h\in\bb V_h
	\end{equation}
	to infer that
	\[
	\norm{\bff\delta_h^{\ell+1}}{h}
	\le
	Ckh^{-2}\norm{\bff\delta_h^\ell}{h}.
	\]
	By choosing $c_{\mathrm{FL}} >0$ sufficiently small in \eqref{eq:inner-cfl-condition},
	we obtain $Ckh^{-2}\le q<1$, and thus
	\eqref{eq:inner-contraction} follows.
	
	It remains to prove that the stopping criterion is reached in finitely many
	steps. Since the lumped norm is nodal, we have
	\begin{align*}
		\norm{
			I_h
			\left[
			\bff\eta_{h,\varepsilon}^{i,\ell+1}
			\times
			\mathcal H_{h,\lambda}
			(\bff\eta_{h,\varepsilon}^{i,\ell+1}
			-
			\bff\eta_{h,\varepsilon}^{i,\ell})
			\right]
		}{h}^2
		&=
		\sum_{z\in\mathcal N_h}
		\beta_z
		\left|
		\bff\eta_{h,\varepsilon}^{i,\ell+1}(z)
		\times
		\mathcal H_{h,\lambda}
		(\bff\eta_{h,\varepsilon}^{i,\ell+1}
		-
		\bff\eta_{h,\varepsilon}^{i,\ell})(z)
		\right|^2
		\\
		&\le
		\sum_{z\in\mathcal N_h}
		\beta_z
		\left|
		\mathcal H_{h,\lambda}
		(\bff\eta_{h,\varepsilon}^{i,\ell+1}
		-
		\bff\eta_{h,\varepsilon}^{i,\ell})(z)
		\right|^2
		\\
		&=
		\norm{
			\mathcal H_{h,\lambda}
			(\bff\eta_{h,\varepsilon}^{i,\ell+1}
			-
			\bff\eta_{h,\varepsilon}^{i,\ell})
		}{h}^2,
	\end{align*}
	where we again used \eqref{eq:eta-linfty-bound}.
	Thanks to \eqref{eq:Hh-inverse-hnorm}, we obtain
	\[
	\norm{
		I_h
		\left[
		\bff\eta_{h,\varepsilon}^{i,\ell+1}
		\times
		\mathcal H_{h,\lambda}
		(\bff\eta_{h,\varepsilon}^{i,\ell+1}
		-
		\bff\eta_{h,\varepsilon}^{i,\ell})
		\right]
	}{h}
	\le
	Ch^{-2}
	\norm{
		\bff\eta_{h,\varepsilon}^{i,\ell+1}
		-
		\bff\eta_{h,\varepsilon}^{i,\ell}
	}{h}.
	\]
	By \eqref{eq:inner-contraction},
	\[
	\norm{
		\bff\eta_{h,\varepsilon}^{i,\ell+1}
		-
		\bff\eta_{h,\varepsilon}^{i,\ell}
	}{h}
	\le
	q^\ell
	\norm{
		\bff\eta_{h,\varepsilon}^{i,1}
		-
		\bff\eta_{h,\varepsilon}^{i,0}
	}{h}.
	\]
	Thus the left-hand side of \eqref{eq:constraint-preserving-stopping}
	tends to zero geometrically as $\ell\to\infty$. Since
	$\varepsilon_{i+1}>0$, the stopping criterion is reached after finitely
	many iterations.
\end{proof}

Having shown that the inner iteration terminates under the time-step condition
in Lemma~\ref{lem:constraint-preserving-contraction}, we next identify the
equation satisfied by the stopped iterate. This equation is the ideal midpoint
scheme perturbed by an algebraic residual which is controlled directly by the
stopping criterion.

\begin{lemma}
	\label{lem:practical-perturbed-scheme}
	Let $\{\bff m_{h,\varepsilon}^i\}_{i=0}^N$ be generated by
	Algorithm~\ref{alg:constraint-preserving-fixed-point}. Then, for each $i\in \{0,\ldots,N-1\}$ and every $\bff\phi_h\in\bb V_h$,
	\begin{align}
		&\inpro{\dtt\bff m_{h,\varepsilon}^{i+1}}{\bff\phi_h}_h
		-
		\alpha
		\inpro{
			\overline{\bff m}_{h,\varepsilon}^{i+\frac12}
			\times
			\dtt\bff m_{h,\varepsilon}^{i+1}
		}{
			\bff\phi_h
		}_h
		+
		\gamma
		\inpro{
			\overline{\bff m}_{h,\varepsilon}^{i+\frac12}
			\times
			\mathcal H_{h,\lambda}
			\overline{\bff m}_{h,\varepsilon}^{i+\frac12}
		}{
			\bff\phi_h
		}_h
		=
		\gamma
		\inpro{
			\bff s_{h,\varepsilon}^{i+1}
		}{
			\bff\phi_h
		}_h,
		\label{eq:practical-perturbed-scheme}
	\end{align}
	where $\bff s_{h,\varepsilon}^{i+1}$ is defined by \eqref{eq:def-ri-si}, satisfying \eqref{eq:s-epsilon-bound}.
\end{lemma}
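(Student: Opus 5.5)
We plan to prove the lemma by a direct algebraic rewriting of the stopped fixed-point equation \eqref{eq:constraint-preserving-fixed-point} at $\ell=\ell_\ast$. We take \eqref{eq:constraint-preserving-fixed-point} with $\ell=\ell_\ast$, write $\bff\eta:=\bff\eta_{h,\varepsilon}^{i,\ell_\ast+1}=\overline{\bff m}_{h,\varepsilon}^{i+\frac12}$, and use the update \eqref{eq:def-inexact-midpoint-update} in the form $\bff\eta-\bff m_{h,\varepsilon}^i=\frac k2\dtt\bff m_{h,\varepsilon}^{i+1}$.

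\textbf{Time-derivative term.} Moving $\inpro{\bff m_{h,\varepsilon}^i}{\bff\phi_h}_h$ to the left gives $\inpro{\bff\eta-\bff m_{h,\varepsilon}^i}{\bff\phi_h}_h=\frac k2\inpro{\dtt\bff m_{h,\varepsilon}^{i+1}}{\bff\phi_h}_h$.

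\textbf{Gilbert term.} Since $\bff\eta\times\bff\eta=\bff 0$ nodally, we have
\[
\bff\eta\times\bff m_{h,\varepsilon}^i
=
-\bff\eta\times(\bff\eta-\bff m_{h,\varepsilon}^i)
=
-\frac k2\,\overline{\bff m}_{h,\varepsilon}^{i+\frac12}\times\dtt\bff m_{h,\varepsilon}^{i+1}
\]
at every node. This identity is all that is needed, because the mass-lumped product only sees nodal values.

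\textbf{Precession term.} We split the lagged field as $\mathcal H_{h,\lambda}\bff\eta_{h,\varepsilon}^{i,\ell_\ast}=\mathcal H_{h,\lambda}\bff\eta-\bff r_{h,\varepsilon}^{i+1}$, using linearity of $\mathcal H_{h,\lambda}$ and the definition \eqref{eq:def-ri-si}. Hence
\[
\frac{\gamma k}{2}\inpro{\bff\eta\times\mathcal H_{h,\lambda}\bff\eta_{h,\varepsilon}^{i,\ell_\ast}}{\bff\phi_h}_h
=
\frac{\gamma k}{2}\inpro{\bff\eta\times\mathcal H_{h,\lambda}\bff\eta}{\bff\phi_h}_h
-\frac{\gamma k}{2}\inpro{\bff\eta\times\bff r_{h,\varepsilon}^{i+1}}{\bff\phi_h}_h .
\]
The last product equals $\inpro{\bff s_{h,\varepsilon}^{i+1}}{\bff\phi_h}_h$, because $\inpro{\cdot}{\cdot}_h$ depends only on nodal values and $I_h$ preserves them.

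\textbf{Conclusion.} Collecting the three terms, dividing by $k/2$, and moving the $\bff s$-term to the right-hand side yields \eqref{eq:practical-perturbed-scheme}. The bound \eqref{eq:s-epsilon-bound} was already recorded from the stopping criterion \eqref{eq:constraint-preserving-stopping}, since $\overline{\bff m}_{h,\varepsilon}^{i+\frac12}=\bff\eta_{h,\varepsilon}^{i,\ell_\ast+1}$. Existence of $\ell_\ast$ is given by Lemma~\ref{lem:constraint-preserving-contraction}, so the lemma is understood under that lemma's hypotheses.

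There is no real obstacle here. The only point requiring care is the sign bookkeeping in the Gilbert term, which must be converted via the nodal identity $\bff\eta\times\bff\eta=\bff0$, and in the defect term.
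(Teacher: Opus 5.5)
Your proposal is correct and follows essentially the same route as the paper. It takes the stopped fixed-point equation at $\ell=\ell_\ast$ and uses $\bff\eta_{h,\varepsilon}^{i,\ell_\ast+1}-\bff m_{h,\varepsilon}^i=\frac k2\dtt\bff m_{h,\varepsilon}^{i+1}$ to convert the Gilbert term. It then scales by $2/k$ and adds and subtracts $\mathcal H_{h,\lambda}\overline{\bff m}_{h,\varepsilon}^{i+\frac12}$, which produces $\bff s_{h,\varepsilon}^{i+1}$ via the nodal character of $\inpro{\cdot}{\cdot}_h$. Your sign bookkeeping checks out, and your remark that termination of the inner loop relies on the hypotheses of Lemma~\ref{lem:constraint-preserving-contraction} is a fair clarification rather than a gap.
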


\begin{proof}
	By \eqref{eq:def-inexact-midpoint-update},
	\[
	\overline{\bff m}_{h,\varepsilon}^{i+\frac12}
	=
	\bff\eta_{h,\varepsilon}^{i,\ell_\ast+1},
	\qquad
	\dtt\bff m_{h,\varepsilon}^{i+1}
	=
	\frac{2}{k}
	\left(
	\bff\eta_{h,\varepsilon}^{i,\ell_\ast+1}
	-
	\bff m_{h,\varepsilon}^{i}
	\right).
	\]
	Hence, we have
	\[
	\bff\eta_{h,\varepsilon}^{i,\ell_\ast+1}
	\times
	\dtt\bff m_{h,\varepsilon}^{i+1}
	=
	-\frac{2}{k}
	\bff\eta_{h,\varepsilon}^{i,\ell_\ast+1}
	\times
	\bff m_{h,\varepsilon}^{i}.
	\]
	Multiplying \eqref{eq:constraint-preserving-fixed-point} with
	$\ell=\ell_\ast$ by $2/k$ then gives
	\begin{align}
		&\inpro{\dtt\bff m_{h,\varepsilon}^{i+1}}{\bff\phi_h}_h
		-
		\alpha
		\inpro{
			\overline{\bff m}_{h,\varepsilon}^{i+\frac12}
			\times
			\dtt\bff m_{h,\varepsilon}^{i+1}
		}{
			\bff\phi_h
		}_h
		+
		\gamma
		\inpro{
			\overline{\bff m}_{h,\varepsilon}^{i+\frac12}
			\times
			\mathcal H_{h,\lambda}
			\bff\eta_{h,\varepsilon}^{i,\ell_\ast}
		}{
			\bff\phi_h
		}_h
		=
		0.
		\label{eq:stopped-scheme-old-field-short}
	\end{align}
	Adding and subtracting the fully implicit field contribution gives
	\begin{align}
		&\inpro{\dtt\bff m_{h,\varepsilon}^{i+1}}{\bff\phi_h}_h
		-
		\alpha
		\inpro{
			\overline{\bff m}_{h,\varepsilon}^{i+\frac12}
			\times
			\dtt\bff m_{h,\varepsilon}^{i+1}
		}{
			\bff\phi_h
		}_h
		+
		\gamma
		\inpro{
			\overline{\bff m}_{h,\varepsilon}^{i+\frac12}
			\times
			\mathcal H_{h,\lambda}
			\overline{\bff m}_{h,\varepsilon}^{i+\frac12}
		}{
			\bff\phi_h
		}_h
		\notag
		\\
		&\qquad
		=
		\gamma
		\inpro{
			\overline{\bff m}_{h,\varepsilon}^{i+\frac12}
			\times
			\mathcal H_{h,\lambda}
			\left(
			\bff\eta_{h,\varepsilon}^{i,\ell_\ast+1}
			-
			\bff\eta_{h,\varepsilon}^{i,\ell_\ast}
			\right)
		}{
			\bff\phi_h
		}_h .
		\label{eq:perturbed-scheme-before-sh-short}
	\end{align}
	Using the notations in \eqref{eq:def-ri-si}, and using the fact that the mass-lumped inner product is nodal,
	\[
	\inpro{
		\overline{\bff m}_{h,\varepsilon}^{i+\frac12}
		\times
		\bff r_{h,\varepsilon}^{i+1}
	}{
		\bff\phi_h
	}_h
	=
	\inpro{
		\bff s_{h,\varepsilon}^{i+1}
	}{
		\bff\phi_h
	}_h,
	\qquad
	\forall \bff\phi_h\in\bb V_h.
	\]
	Therefore \eqref{eq:perturbed-scheme-before-sh-short} implies
	\eqref{eq:practical-perturbed-scheme}, as required.
\end{proof}

The identity above shows that the stopped iterate solves the ideal midpoint
scheme up to the explicitly controlled residual
$\gamma\bff s_{h,\varepsilon}^{i+1}$. We now derive the corresponding energy
identity, which is the starting point for the stability and error estimates.

\begin{lemma}[Perturbed energy identity]
	\label{lem:practical-energy-identity}
	Let $\{\bff m_{h,\varepsilon}^i\}_{i=0}^N$ be generated by
	Algorithm~\ref{alg:constraint-preserving-fixed-point}. Then, for each
	$i=0, 1,\ldots,N-1$,
	\begin{align}
		&\norm{\dtt\bff m_{h,\varepsilon}^{i+1}}{h}^2
		+
		\frac{\gamma}{2\alpha k}
		\left[
		\mathfrak a_{h,\lambda}
		(\bff m_{h,\varepsilon}^{i+1},\bff m_{h,\varepsilon}^{i+1})
		-
		\mathfrak a_{h,\lambda}
		(\bff m_{h,\varepsilon}^i,\bff m_{h,\varepsilon}^i)
		\right]
		\notag
		\\
		&\qquad
		=
		\gamma
		\inpro{
			\bff s_{h,\varepsilon}^{i+1}
		}{
			\dtt\bff m_{h,\varepsilon}^{i+1}
		}_h
		-
		\frac{\gamma^2}{\alpha}
		\inpro{
			\bff s_{h,\varepsilon}^{i+1}
		}{
			\mathcal H_{h,\lambda}
			\overline{\bff m}_{h,\varepsilon}^{i+\frac12}
		}_h .
		\label{eq:practical-perturbed-energy}
	\end{align}
	In particular, the energy identity \eqref{eq:discrete-energy-stability} is recovered whenever
	$\bff s_{h,\varepsilon}^{i+1}=\bff0$.
\end{lemma}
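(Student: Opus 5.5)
The plan is to repeat the two-test-function argument from the proof of Lemma~\ref{lem:scheme-stability}, now applied to the perturbed scheme \eqref{eq:practical-perturbed-scheme} of Lemma~\ref{lem:practical-perturbed-scheme}, and to keep track of the right-hand side $\gamma\inpro{\bff s_{h,\varepsilon}^{i+1}}{\bff\phi_h}_h$.

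\textbf{First test.} Take $\bff\phi_h=\dtt\bff m_{h,\varepsilon}^{i+1}$ in \eqref{eq:practical-perturbed-scheme}. Because the mass-lumped product is nodal, the Gilbert term $\inpro{\overline{\bff m}_{h,\varepsilon}^{i+\frac12}\times\dtt\bff m_{h,\varepsilon}^{i+1}}{\dtt\bff m_{h,\varepsilon}^{i+1}}_h$ vanishes. This leaves
\[
\norm{\dtt\bff m_{h,\varepsilon}^{i+1}}{h}^2+\gamma\inpro{\overline{\bff m}_{h,\varepsilon}^{i+\frac12}\times\mathcal H_{h,\lambda}\overline{\bff m}_{h,\varepsilon}^{i+\frac12}}{\dtt\bff m_{h,\varepsilon}^{i+1}}_h=\gamma\inpro{\bff s_{h,\varepsilon}^{i+1}}{\dtt\bff m_{h,\varepsilon}^{i+1}}_h .
\]

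\textbf{Second test.} Take $\bff\phi_h=-\frac{\gamma}{\alpha}\mathcal H_{h,\lambda}\overline{\bff m}_{h,\varepsilon}^{i+\frac12}$. Nodally, the precession term $\overline{\bff m}\times\mathcal H\overline{\bff m}$ is orthogonal to $\mathcal H\overline{\bff m}$, so it drops out. The Gilbert term becomes
\[
\gamma\inpro{\overline{\bff m}_{h,\varepsilon}^{i+\frac12}\times\dtt\bff m_{h,\varepsilon}^{i+1}}{\mathcal H_{h,\lambda}\overline{\bff m}_{h,\varepsilon}^{i+\frac12}}_h
=-\gamma\inpro{\overline{\bff m}_{h,\varepsilon}^{i+\frac12}\times\mathcal H_{h,\lambda}\overline{\bff m}_{h,\varepsilon}^{i+\frac12}}{\dtt\bff m_{h,\varepsilon}^{i+1}}_h,
\]
using the nodal triple-product identity $(\bff a\times\bff b)\cdot\bff c=-(\bff a\times\bff c)\cdot\bff b$. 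The first term equals $-\frac{\gamma}{\alpha}\inpro{\dtt\bff m_{h,\varepsilon}^{i+1}}{\mathcal H_{h,\lambda}\overline{\bff m}_{h,\varepsilon}^{i+\frac12}}_h$. By \eqref{eq:def-discrete-shifted-H} and the symmetry of $\mathfrak a_{h,\lambda}$, this equals
\[
\frac{\gamma}{\alpha}\mathfrak a_{h,\lambda}\big(\overline{\bff m}_{h,\varepsilon}^{i+\frac12},\dtt\bff m_{h,\varepsilon}^{i+1}\big)=\frac{\gamma}{2\alpha k}\left[\mathfrak a_{h,\lambda}(\bff m_{h,\varepsilon}^{i+1},\bff m_{h,\varepsilon}^{i+1})-\mathfrak a_{h,\lambda}(\bff m_{h,\varepsilon}^i,\bff m_{h,\varepsilon}^i)\right].
\]
This is the polarization step already used in the proof of Lemma~\ref{lem:scheme-stability}. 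The right-hand side of this test is $-\frac{\gamma^2}{\alpha}\inpro{\bff s_{h,\varepsilon}^{i+1}}{\mathcal H_{h,\lambda}\overline{\bff m}_{h,\varepsilon}^{i+\frac12}}_h$.

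\textbf{Combination.} Add the two tested identities. The precession cross terms, one from each test, cancel exactly, and what remains is \eqref{eq:practical-perturbed-energy}. Setting $\bff s_{h,\varepsilon}^{i+1}=\bff0$ and multiplying by $k$ recovers \eqref{eq:discrete-energy-stability}.

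There is no genuine obstacle here. The only care needed is in the signs of the triple products, in particular confirming that the Gilbert contribution from the second test has the sign that cancels the precession term from the first test. This holds because every cross-product term is evaluated nodally through $\inpro{\cdot}{\cdot}_h$.
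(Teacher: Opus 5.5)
Your proposal is correct and follows essentially the same route as the paper. You test the perturbed scheme \eqref{eq:practical-perturbed-scheme} with $\dtt\bff m_{h,\varepsilon}^{i+1}$ and with $-\frac{\gamma}{\alpha}\mathcal H_{h,\lambda}\overline{\bff m}_{h,\varepsilon}^{i+\frac12}$, add the results so the precession terms cancel, and turn the remaining term into the energy increment via \eqref{eq:def-discrete-shifted-H} and the symmetry of $\mathfrak a_{h,\lambda}$; your explicit nodal triple-product sign check is exactly the step the paper leaves implicit.
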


\begin{proof}
	By Lemma~\ref{lem:practical-perturbed-scheme}, the stopped iterate satisfies
	\eqref{eq:practical-perturbed-scheme}. Taking
	$\bff\phi_h=\dtt\bff m_{h,\varepsilon}^{i+1}$ in 	\eqref{eq:practical-perturbed-scheme} gives
	\begin{align}
		&\norm{\dtt\bff m_{h,\varepsilon}^{i+1}}{h}^2
		+
		\gamma
		\inpro{
			\overline{\bff m}_{h,\varepsilon}^{i+\frac12}
			\times
			\mathcal H_{h,\lambda}
			\overline{\bff m}_{h,\varepsilon}^{i+\frac12}
		}{
			\dtt\bff m_{h,\varepsilon}^{i+1}
		}_h
		=
		\gamma
		\inpro{
			\bff s_{h,\varepsilon}^{i+1}
		}{
			\dtt\bff m_{h,\varepsilon}^{i+1}
		}_h.
		\label{eq:practical-energy-test-dt}
	\end{align}
	Next, taking $\bff\phi_h
	=
	-\frac{\gamma}{\alpha}
	\mathcal H_{h,\lambda}
	\overline{\bff m}_{h,\varepsilon}^{i+\frac12}$
	in \eqref{eq:practical-perturbed-scheme} gives
	\begin{align}
		&-\frac{\gamma}{\alpha}
		\inpro{
			\dtt\bff m_{h,\varepsilon}^{i+1}
		}{
			\mathcal H_{h,\lambda}
			\overline{\bff m}_{h,\varepsilon}^{i+\frac12}
		}_h
		-
		\gamma
		\inpro{
			\overline{\bff m}_{h,\varepsilon}^{i+\frac12}
			\times
			\mathcal H_{h,\lambda}
			\overline{\bff m}_{h,\varepsilon}^{i+\frac12}
		}{
			\dtt\bff m_{h,\varepsilon}^{i+1}
		}_h
		\notag
		\\
		&\qquad
		=
		-\frac{\gamma^2}{\alpha}
		\inpro{
			\bff s_{h,\varepsilon}^{i+1}
		}{
			\mathcal H_{h,\lambda}
			\overline{\bff m}_{h,\varepsilon}^{i+\frac12}
		}_h .
		\label{eq:practical-energy-test-field}
	\end{align}
	We now add \eqref{eq:practical-energy-test-dt} and
	\eqref{eq:practical-energy-test-field} to observe that the leading precession terms cancel.
	Moreover, by the definition of $\mathcal H_{h,\lambda}$,
	\[
	\inpro{
		\dtt\bff m_{h,\varepsilon}^{i+1}
	}{
		\mathcal H_{h,\lambda}
		\overline{\bff m}_{h,\varepsilon}^{i+\frac12}
	}_h
	=
	-\mathfrak a_{h,\lambda}
	\left(
	\overline{\bff m}_{h,\varepsilon}^{i+\frac12},
	\dtt\bff m_{h,\varepsilon}^{i+1}
	\right).
	\]
	Since $\mathfrak a_{h,\lambda}$ is symmetric,
	\[
	\mathfrak a_{h,\lambda}
	\left(
	\overline{\bff m}_{h,\varepsilon}^{i+\frac12},
	\dtt\bff m_{h,\varepsilon}^{i+1}
	\right)
	=
	\frac1{2k}
	\left[
	\mathfrak a_{h,\lambda}
	(\bff m_{h,\varepsilon}^{i+1},\bff m_{h,\varepsilon}^{i+1})
	-
	\mathfrak a_{h,\lambda}
	(\bff m_{h,\varepsilon}^i,\bff m_{h,\varepsilon}^i)
	\right].
	\]
	This proves \eqref{eq:practical-perturbed-energy}.
\end{proof}

The identity \eqref{eq:practical-perturbed-energy} is no longer a pure
dissipation law because of the terms involving
$\bff s_{h,\varepsilon}^{i+1}$. However, the stopping criterion gives
$\norm{\bff s_{h,\varepsilon}^{i+1}}{h}\le\varepsilon_{i+1}$, which is enough to
recover an $\bb H^1$-stability estimate under a mild smallness condition on the
accumulated solver tolerances.

\begin{lemma}[$\bb H^1$-stability of the inexact scheme]
	\label{lem:inexact-H1-stability}
	Let $\{\bff m_{h,\varepsilon}^i\}_{i=0}^N$ be generated by
	Algorithm~\ref{alg:constraint-preserving-fixed-point}. Assume that
	\begin{equation}\label{eq:solver-tolerance-stability-condition}
		k\sum_{i=0}^{N-1}h^{-2}\varepsilon_{i+1}^2
		\le
		C_{\mathrm{st}}.
	\end{equation}
	Then, for $k>0$ sufficiently small,
	\begin{equation}\label{eq:inexact-H1-stability}
		\max_{0\le j\le N}
		\norm{\bff m_{h,\varepsilon}^j}{\bb H^1}^2
		+
		k\sum_{i=0}^{N-1}
		\norm{\dtt\bff m_{h,\varepsilon}^{i+1}}{h}^2
		\le
		C
		\left(
		\norm{\bff m_{h,\varepsilon}^0}{\bb H^1}^2
		+
		C_{\mathrm{st}}
		\right).
	\end{equation}
	In particular, for a uniform tolerance
	$\varepsilon_{i+1}=\varepsilon_{\rm fp}$, condition
	\eqref{eq:solver-tolerance-stability-condition} is satisfied if $\varepsilon_{\rm fp}= O(h)$.
\end{lemma}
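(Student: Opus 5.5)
Starting from the perturbed energy identity \eqref{eq:practical-perturbed-energy} of Lemma~\ref{lem:practical-energy-identity}, multiply by $k$ and bound the two defect terms on the right-hand side, then sum and apply a discrete Gronwall argument. The first defect term is handled by Cauchy--Schwarz and Young:
\[
\gamma\,\big|\inpro{\bff s_{h,\varepsilon}^{i+1}}{\dtt\bff m_{h,\varepsilon}^{i+1}}_h\big|
\le \tfrac14\norm{\dtt\bff m_{h,\varepsilon}^{i+1}}{h}^2 + C\varepsilon_{i+1}^2,
\]
using \eqref{eq:s-epsilon-bound}. The first part is absorbed into the left-hand side. The residual $C k\varepsilon_{i+1}^2 \le C k h^{-2}\varepsilon_{i+1}^2$ for $h\le 1$, so it is covered by \eqref{eq:solver-tolerance-stability-condition}.

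The main obstacle is the second defect term, $\frac{\gamma^2}{\alpha}\inpro{\bff s_{h,\varepsilon}^{i+1}}{\mathcal H_{h,\lambda}\overline{\bff m}_{h,\varepsilon}^{i+\frac12}}_h$. Here $\mathcal H_{h,\lambda}\overline{\bff m}$ is only controlled in a negative-order sense by the energy. I would use the definition \eqref{eq:def-discrete-shifted-H} to rewrite it as
\[
-\mathfrak a_{h,\lambda}\big(\overline{\bff m}_{h,\varepsilon}^{i+\frac12},\bff s_{h,\varepsilon}^{i+1}\big),
\]
which is legitimate since $\bff s_{h,\varepsilon}^{i+1}\in\bb V_h$. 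Continuity of $\mathfrak a_{h,\lambda}$ (using \eqref{eq:lumped-L2-equivalence} for the shift) gives a bound $C\norm{\overline{\bff m}_{h,\varepsilon}^{i+\frac12}}{\bb H^1}\norm{\bff s_{h,\varepsilon}^{i+1}}{\bb H^1}$. The inverse estimate \eqref{eq:inverse-estimate} with \eqref{eq:lumped-L2-equivalence} then yields $\norm{\bff s_{h,\varepsilon}^{i+1}}{\bb H^1}\le Ch^{-1}\varepsilon_{i+1}$. By Young, this term is at most
\[
C\norm{\overline{\bff m}_{h,\varepsilon}^{i+\frac12}}{\bb H^1}^2 + C h^{-2}\varepsilon_{i+1}^2.
\]
This is exactly why the tolerance condition carries the factor $h^{-2}$. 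Alternatively one could use \eqref{eq:Hh-inverse-hnorm}, but that would cost $h^{-2}$ rather than $h^{-1}$, so the $\mathfrak a$-route is preferable.

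Next I combine the two bounds. By coercivity \eqref{eq:shifted-coercivity}, $\norm{\overline{\bff m}_{h,\varepsilon}^{i+\frac12}}{\bb H^1}^2\le C[\mathfrak a_{h,\lambda}(\bff m_{h,\varepsilon}^{i+1},\bff m_{h,\varepsilon}^{i+1})+\mathfrak a_{h,\lambda}(\bff m_{h,\varepsilon}^{i},\bff m_{h,\varepsilon}^{i})]$. Setting $A^i:=\mathfrak a_{h,\lambda}(\bff m_{h,\varepsilon}^i,\bff m_{h,\varepsilon}^i)\ge0$, this gives
\[
\tfrac{3k}{4}\norm{\dtt\bff m_{h,\varepsilon}^{i+1}}{h}^2+\tfrac{\gamma}{2\alpha}(A^{i+1}-A^i)\le Ck(A^{i+1}+A^i)+Ckh^{-2}\varepsilon_{i+1}^2.
\]
For $k$ small, absorb $CkA^{i+1}$ into the left-hand side, exactly as in the proof of Theorem~\ref{thm:energy-error-estimate}. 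Then sum over $i=0,\ldots,j-1$ and apply the discrete Gronwall lemma to get
\[
\max_j A^j\le C\big(A^0+k\textstyle\sum_i h^{-2}\varepsilon_{i+1}^2\big)\le C\big(A^0+C_{\mathrm{st}}\big).
\]
Reinserting this into the telescoped sum bounds $k\sum_i\norm{\dtt\bff m_{h,\varepsilon}^{i+1}}{h}^2$. Finally, \eqref{eq:shifted-coercivity} and the continuity bound $A^0\le C\norm{\bff m_{h,\varepsilon}^0}{\bb H^1}^2$ yield \eqref{eq:inexact-H1-stability}.

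For the final remark, take $\varepsilon_{i+1}=\varepsilon_{\rm fp}\le c h$. Then $k\sum_{i=0}^{N-1}h^{-2}\varepsilon_{\rm fp}^2 = T h^{-2}\varepsilon_{\rm fp}^2\le c^2T$, so \eqref{eq:solver-tolerance-stability-condition} holds with $C_{\mathrm{st}}=c^2T$.
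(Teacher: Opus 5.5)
Your proposal is correct and follows essentially the same route as the paper: you start from the perturbed energy identity and apply Young's inequality to the first defect. You then rewrite the second defect as $-\mathfrak a_{h,\lambda}(\overline{\bff m}_{h,\varepsilon}^{i+\frac12},\bff s_{h,\varepsilon}^{i+1})$, use the inverse bound $\norm{\bff s_{h,\varepsilon}^{i+1}}{\bb H^1}\le Ch^{-1}\varepsilon_{i+1}$, and finish with coercivity, absorption for small $k$, discrete Gronwall and telescoping, exactly as the paper does; your explicit remark that bounding $Ck\varepsilon_{i+1}^2$ by $Ckh^{-2}\varepsilon_{i+1}^2$ needs $h\le 1$ makes precise a step the paper leaves implicit.
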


\begin{proof}
	We estimate the two residual terms in the perturbed energy identity
	\eqref{eq:practical-perturbed-energy}. First, by the Cauchy--Schwarz inequality and \eqref{eq:s-epsilon-bound},
	\[
	\gamma
	\left|
	\inpro{
		\bff s_{h,\varepsilon}^{i+1}
	}{
		\dtt\bff m_{h,\varepsilon}^{i+1}
	}_h
	\right|
	\le
	\frac14
	\norm{\dtt\bff m_{h,\varepsilon}^{i+1}}{h}^2
	+
	C\varepsilon_{i+1}^2.
	\]
	For the second residual term, since
	$\bff s_{h,\varepsilon}^{i+1}\in\bb V_h$, the identity \eqref{eq:def-discrete-shifted-H} gives
	\[
	\inpro{
		\bff s_{h,\varepsilon}^{i+1}
	}{
		\mathcal H_{h,\lambda}
		\overline{\bff m}_{h,\varepsilon}^{i+\frac12}
	}_h
	=
	-\mathfrak a_{h,\lambda}
	\left(
	\overline{\bff m}_{h,\varepsilon}^{i+\frac12},
	\bff s_{h,\varepsilon}^{i+1}
	\right).
	\]
	We note that by the inverse estimate \eqref{eq:inverse-estimate},
	\[
	\norm{\bff s_{h,\varepsilon}^{i+1}}{\bb H^1}
	\le
	Ch^{-1}\norm{\bff s_{h,\varepsilon}^{i+1}}{h}
	\le
	Ch^{-1}\varepsilon_{i+1}.
	\]
	Hence, by the continuity of $\mathfrak a_{h,\lambda}$ and Young's inequality, we obtain
	\begin{align*}
		\left|
		\inpro{
			\bff s_{h,\varepsilon}^{i+1}
		}{
			\mathcal H_{h,\lambda}
			\overline{\bff m}_{h,\varepsilon}^{i+\frac12}
		}_h
		\right|
		&\le
		C
		\norm{\overline{\bff m}_{h,\varepsilon}^{i+\frac12}}{\bb H^1}
		\norm{\bff s_{h,\varepsilon}^{i+1}}{\bb H^1}
		\\
		&\le
		Ch^{-1}\varepsilon_{i+1}
		\norm{\overline{\bff m}_{h,\varepsilon}^{i+\frac12}}{\bb H^1}
		\\
		&\le
		C\norm{\overline{\bff m}_{h,\varepsilon}^{i+\frac12}}{\bb H^1}^2
		+
		Ch^{-2}\varepsilon_{i+1}^2 .
	\end{align*}
	Substituting these estimates into
	\eqref{eq:practical-perturbed-energy} and multiplying by $k$ yields
	\begin{align*}
		&\frac{k}{2}
		\norm{\dtt\bff m_{h,\varepsilon}^{i+1}}{h}^2
		+
		\frac{\gamma}{2\alpha}
		\left[
		\mathfrak a_{h,\lambda}
		(\bff m_{h,\varepsilon}^{i+1},\bff m_{h,\varepsilon}^{i+1})
		-
		\mathfrak a_{h,\lambda}
		(\bff m_{h,\varepsilon}^i,\bff m_{h,\varepsilon}^i)
		\right]
		\le
		Ck
		\norm{\overline{\bff m}_{h,\varepsilon}^{i+\frac12}}{\bb H^1}^2
		+
		Ck h^{-2}\varepsilon_{i+1}^2.
	\end{align*}
	Now, we aim to estimate the terms on the right-hand side.
	By the coercivity and continuity of $\mathfrak a_{h,\lambda}$,
	\[
	\norm{\overline{\bff m}_{h,\varepsilon}^{i+\frac12}}{\bb H^1}^2
	\le
	C
	\left[
	\mathfrak a_{h,\lambda}
	(\bff m_{h,\varepsilon}^{i+1},\bff m_{h,\varepsilon}^{i+1})
	+
	\mathfrak a_{h,\lambda}
	(\bff m_{h,\varepsilon}^{i},\bff m_{h,\varepsilon}^{i})
	\right].
	\]
	Therefore, we infer that
	\begin{align}
		&\frac{k}{2}
		\norm{\dtt\bff m_{h,\varepsilon}^{i+1}}{h}^2
		+
		\frac{\gamma}{2\alpha}
		\left[
		\mathfrak a_{h,\lambda}
		(\bff m_{h,\varepsilon}^{i+1},\bff m_{h,\varepsilon}^{i+1})
		-
		\mathfrak a_{h,\lambda}
		(\bff m_{h,\varepsilon}^i,\bff m_{h,\varepsilon}^i)
		\right]
		\notag
		\\
		&\qquad
		\le
		Ck
		\left[
		\mathfrak a_{h,\lambda}
		(\bff m_{h,\varepsilon}^{i+1},\bff m_{h,\varepsilon}^{i+1})
		+
		\mathfrak a_{h,\lambda}
		(\bff m_{h,\varepsilon}^{i},\bff m_{h,\varepsilon}^{i})
		\right]
		+
		Ck h^{-2}\varepsilon_{i+1}^2.
		\label{eq:inexact-stability-gro}
	\end{align}
	For $k>0$ sufficiently small, the next-step energy term on the right-hand
	side of \eqref{eq:inexact-stability-gro} can be absorbed into the left-hand side. By a similar argument as in \eqref{eq:energy-recursion-clean} and applying the discrete Gronwall lemma, we obtain
	\[
	\max_{0\le j\le N}
	\mathfrak a_{h,\lambda}
	(\bff m_{h,\varepsilon}^{j},\bff m_{h,\varepsilon}^{j})
	\le
	C
	\left(
	\mathfrak a_{h,\lambda}
	(\bff m_{h,\varepsilon}^{0},\bff m_{h,\varepsilon}^{0})
	+
	k\sum_{i=0}^{N-1}h^{-2}\varepsilon_{i+1}^2
	\right).
	\]
	Summing \eqref{eq:inexact-stability-gro} over $i=0,\ldots,j-1$, using the
	telescoping of the energy increments, and applying the preceding bound, we
	also obtain
	\[
	k\sum_{i=0}^{N-1}
	\norm{\dtt\bff m_{h,\varepsilon}^{i+1}}{h}^2
	\le
	C
	\left(
	\mathfrak a_{h,\lambda}
	(\bff m_{h,\varepsilon}^{0},\bff m_{h,\varepsilon}^{0})
	+
	k\sum_{i=0}^{N-1}h^{-2}\varepsilon_{i+1}^2
	\right).
	\]
	Finally, using the equivalence between
	$\mathfrak a_{h,\lambda}(\cdot,\cdot)$ and the $\bb H^1$-norm on $\bb V_h$,
	and then using \eqref{eq:solver-tolerance-stability-condition}, proves
	\eqref{eq:inexact-H1-stability}.
	
	If $\varepsilon_{i+1}=\varepsilon_{\rm fp}$, then $k\sum_{i=0}^{N-1}h^{-2}\varepsilon_{\rm fp}^2
	=
	Th^{-2}\varepsilon_{\rm fp}^2$.
	Thus, \eqref{eq:solver-tolerance-stability-condition} is satisfied whenever
	$\varepsilon_{\rm fp} = O(h)$. This completes the proof.
\end{proof}

With the stability estimate in hand, we are now ready to derive an \emph{a priori} error bound for the practical inexact scheme. As in the exact-solver case, we compare the numerical solution with the elliptic projection of the exact solution. Define
\begin{equation}\label{eq:def-inexact-error}
	\bff e_{h,\varepsilon}^i
	:=
	\bff m_{h,\varepsilon}^i-R_h\bff m^i,
	\qquad
	\overline{\bff e}_{h,\varepsilon}^{i+\frac12}
	:=
	\overline{\bff m}_{h,\varepsilon}^{i+\frac12}
	-
	R_h\overline{\bff m}^{i+\frac12}.
\end{equation}

\begin{theorem}[Energy-norm error estimate for the practical scheme]
	\label{thm:error-practical-constraint-preserving}
	Let Assumption~\ref{ass:exact-regularity} hold, and suppose that
	$k\le c_{\mathrm{FL}}h^2$, where $c_{\mathrm{FL}}$ is chosen as in
	Lemma~\ref{lem:constraint-preserving-contraction}. Let
	$\{\bff m_{h,\varepsilon}^i\}_{i=0}^N$ be generated by
	Algorithm~\ref{alg:constraint-preserving-fixed-point}. Then, for $k>0$
	sufficiently small,
	\begin{align}\label{eq:error-practical-energy}
		&\max_{0\le i\le N}
		\norm{\bff e_{h,\varepsilon}^i}{\bb H^1}^2
		+
		k\sum_{i=0}^{N-1}
		\norm{\dtt\bff e_{h,\varepsilon}^{i+1}}{h}^2
		\le
		C
		\left(
		\norm{\bff e_{h,\varepsilon}^0}{\bb H^1}^2
		+
		(h+k^2)^2
		+
		k\sum_{i=0}^{N-1}h^{-2}\varepsilon_{i+1}^2
		\right).
	\end{align}
	Consequently,
	\begin{align}
		\max_{0\le i\le N}
		\norm{\bff m_{h,\varepsilon}^i-\bff m^i}{\bb H^1}
		\le
		C
		\left( h+k^2
		+
		\left(
		k\sum_{i=0}^{N-1}h^{-2}\varepsilon_{i+1}^2
		\right)^{\frac12}
		\right).
		\label{eq:error-practical-total}
	\end{align}
	In particular, if
	\[
	k\sum_{i=0}^{N-1}h^{-2}\varepsilon_{i+1}^2
	\le
	C(h+k^2)^2,
	\]
	then the practical midpoint scheme preserves the same error rate as the ideal scheme; see~\eqref{eq:error-estimate-total}. For a uniform tolerance
	$\varepsilon_{i+1}=\varepsilon_{\rm fp}$, it is sufficient to take $\varepsilon_{\rm fp}\le Ch(h+k^2)$.
\end{theorem}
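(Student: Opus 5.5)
The plan is to repeat the energy argument from the proof of Theorem~\ref{thm:energy-error-estimate}, with the algebraic residual $\gamma\bff s_{h,\varepsilon}^{i+1}$ from Lemma~\ref{lem:practical-perturbed-scheme} as an extra source term. First, subtract the projected exact equation \eqref{eq:projected-exact-equation} from the perturbed scheme \eqref{eq:practical-perturbed-scheme}. The same algebra as in Lemma~\ref{lem:error-equation} (adding and subtracting, and using $\mathcal H_{h,\lambda}R_h\overline{\bff m}^{i+\frac12}=P_h\mathcal H_\lambda(\overline{\bff m}^{i+\frac12})$) then gives the error equation \eqref{eq:error-equation} with $\bff m_h$, $\bff e_h$ replaced by $\bff m_{h,\varepsilon}$, $\bff e_{h,\varepsilon}$. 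The right-hand side acquires the additional term $\gamma\inpro{\bff s_{h,\varepsilon}^{i+1}}{\bff\phi_h}_h$. The only structural inputs used for the old terms are the nodal bound $\norm{\overline{\bff m}_{h,\varepsilon}^{i+\frac12}}{\bb L^\infty}\le1$, which follows from Lemma~\ref{lem:constraint-preserving-inner-iteration}, and the fixed coefficient bounds \eqref{eq:coefficient-W1p-bounds}. Hence these terms carry over unchanged.

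Next, test with $\bff\phi_h=\dtt\bff e_{h,\varepsilon}^{i+1}$ and with $\bff\phi_h=-\frac{\gamma}{\alpha}\mathcal H_{h,\lambda}\overline{\bff e}_{h,\varepsilon}^{i+\frac12}$, and add the two identities. As before, the precession terms cancel and the increment of $\mathfrak a_{h,\lambda}(\bff e_{h,\varepsilon}^i,\bff e_{h,\varepsilon}^i)$ appears. The terms $J_1,\dots,J_6$ are bounded exactly as in \eqref{eq:dt-lower-order-est-clean}--\eqref{eq:rho-F-est-clean}, with $J_4,J_5$ again handled through \eqref{eq:a-product-cancellation}. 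Two new terms arise:
\[
J_7:=\gamma\inpro{\bff s_{h,\varepsilon}^{i+1}}{\dtt\bff e_{h,\varepsilon}^{i+1}}_h,
\qquad
J_8:=-\frac{\gamma^2}{\alpha}\inpro{\bff s_{h,\varepsilon}^{i+1}}{\mathcal H_{h,\lambda}\overline{\bff e}_{h,\varepsilon}^{i+\frac12}}_h .
\]
For $J_7$, Young's inequality and \eqref{eq:s-epsilon-bound} give $|J_7|\le\varepsilon\norm{\dtt\bff e_{h,\varepsilon}^{i+1}}{h}^2+C\varepsilon_{i+1}^2$. For $J_8$, I follow the proof of Lemma~\ref{lem:inexact-H1-stability}. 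Since $\bff s_{h,\varepsilon}^{i+1}\in\bb V_h$, \eqref{eq:def-discrete-shifted-H} gives $J_8=\frac{\gamma^2}{\alpha}\mathfrak a_{h,\lambda}(\overline{\bff e}_{h,\varepsilon}^{i+\frac12},\bff s_{h,\varepsilon}^{i+1})$. Continuity of $\mathfrak a_{h,\lambda}$ and the inverse estimate $\norm{\bff s_{h,\varepsilon}^{i+1}}{\bb H^1}\le Ch^{-1}\varepsilon_{i+1}$ then yield $|J_8|\le C\norm{\overline{\bff e}_{h,\varepsilon}^{i+\frac12}}{\bb H^1}^2+Ch^{-2}\varepsilon_{i+1}^2$. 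The term $C\varepsilon_{i+1}^2$ from $J_7$ is dominated by $h^{-2}\varepsilon_{i+1}^2$ for $h\le1$.

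This produces the one-step inequality \eqref{eq:one-step-gro-clean} with the additional source $Ckh^{-2}\varepsilon_{i+1}^2$. Absorbing the $(i+1)$-energy for small $k$ and applying the discrete Gronwall lemma gives the maximum bound. Summing and telescoping, as in \eqref{eq:telescoping-clean}, gives the bound on $k\sum\norm{\dtt\bff e_{h,\varepsilon}^{i+1}}{h}^2$, and coercivity \eqref{eq:shifted-coercivity} converts energies to $\bb H^1$-norms, which proves \eqref{eq:error-practical-energy}. The CFL condition $k\le c_{\mathrm{FL}}h^2$ enters only through Lemma~\ref{lem:constraint-preserving-contraction}, which guarantees that the algorithm terminates so that the iterates exist.

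For \eqref{eq:error-practical-total}, note $\bff e_{h,\varepsilon}^0=\bff e_h^0$, so \eqref{eq:eh0-est} gives $\norm{\bff e_{h,\varepsilon}^0}{\bb H^1}\le Ch$. The splitting \eqref{eq:error-splitting-total} together with \eqref{eq:DMI-Ritz-H1-est} then finishes the argument. For a uniform tolerance, $k\sum_i h^{-2}\varepsilon_{\rm fp}^2=Th^{-2}\varepsilon_{\rm fp}^2\le C(h+k^2)^2$ exactly when $\varepsilon_{\rm fp}\le Ch(h+k^2)$.

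The main obstacle is $J_8$. The residual is paired with a discrete field, which is of second order, so only the $h^{-1}$ inverse estimate controls it. This loss is what forces the factor $h^{-2}$ in front of the tolerances, and I see no way to avoid it without extra smoothness of $\bff s_{h,\varepsilon}^{i+1}$. I would also check that every constant in $J_1$--$J_6$ depends only on the nodal $\bb L^\infty$ bound, and not on the energy stability \eqref{eq:scheme-H1-stability} of the ideal scheme, which is unavailable here.
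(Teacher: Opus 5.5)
Your proposal is correct and follows the paper's proof essentially step for step. The common structure is: the perturbed error equation with the extra source $\gamma\inpro{\bff s_{h,\varepsilon}^{i+1}}{\bff\phi_h}_h$; the same pair of test functions and the same cancellation; Young's inequality for the residual paired with $\dtt\bff e_{h,\varepsilon}^{i+1}$; the identity $\inpro{\bff s_{h,\varepsilon}^{i+1}}{\mathcal H_{h,\lambda}\overline{\bff e}_{h,\varepsilon}^{i+\frac12}}_h=-\mathfrak a_{h,\lambda}(\overline{\bff e}_{h,\varepsilon}^{i+\frac12},\bff s_{h,\varepsilon}^{i+1})$ combined with the $h^{-1}$ inverse estimate; and finally the identical Gronwall/telescoping argument and error splitting. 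Regarding your closing check, even the nodal bound on $\overline{\bff m}_{h,\varepsilon}^{i+\frac12}$ is not needed: every term containing it either vanishes nodally or cancels between the two tests, so the constants in $J_1$--$J_6$ come only from \eqref{eq:coefficient-W1p-bounds} and \eqref{eq:rho-estimate}.
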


\begin{proof}
	Subtracting the projected exact equation from
	\eqref{eq:practical-perturbed-scheme}, analogously to \eqref{eq:error-equation}, we obtain the perturbed discrete error equation
	\begin{align}\label{eq:error-equation-practical}
		&\inpro{\dtt\bff e_{h,\varepsilon}^{i+1}}{\bff\phi_h}_h
		-
		\alpha
		\inpro{
			\overline{\bff m}_{h,\varepsilon}^{i+\frac12}
			\times
			\dtt\bff e_{h,\varepsilon}^{i+1}
		}{
			\bff\phi_h
		}_h
		+
		\gamma
		\inpro{
			\overline{\bff m}_{h,\varepsilon}^{i+\frac12}
			\times
			\mathcal H_{h,\lambda}
			\overline{\bff e}_{h,\varepsilon}^{i+\frac12}
		}{
			\bff\phi_h
		}_h
		\notag
		\\
		&=
		\alpha
		\inpro{
			\overline{\bff e}_{h,\varepsilon}^{i+\frac12}
			\times
			\dtt R_h\bff m^{i+1}
		}{
			\bff\phi_h
		}_h
		-
		\gamma
		\inpro{
			\overline{\bff e}_{h,\varepsilon}^{i+\frac12}
			\times
			P_h\mathcal H_\lambda
			(\overline{\bff m}^{i+\frac12})
		}{
			\bff\phi_h
		}_h
		\notag
		\\
		&\quad
		-
		\inpro{\bff\rho_h^{i+1}}{\bff\phi_h}_h
		+
		\gamma
		\inpro{
			\bff s_{h,\varepsilon}^{i+1}
		}{
			\bff\phi_h
		}_h.
	\end{align}
	Testing \eqref{eq:error-equation-practical} first with
	$\bff\phi_h=\dtt\bff e_{h,\varepsilon}^{i+1}$ and then with
	$\bff\phi_h
	=
	-\frac{\gamma}{\alpha}
	\mathcal H_{h,\lambda}
	\overline{\bff e}_{h,\varepsilon}^{i+\frac12}$,
	and adding the two identities, gives the same cancellation as in the exact
	solver case (Theorem~\ref{thm:energy-error-estimate}), with two additional terms. More precisely,
	\begin{align}
		&\norm{\dtt\bff e_{h,\varepsilon}^{i+1}}{h}^2
		+
		\frac{\gamma}{2\alpha k}
		\left[
		\mathfrak a_{h,\lambda}
		(\bff e_{h,\varepsilon}^{i+1},\bff e_{h,\varepsilon}^{i+1})
		-
		\mathfrak a_{h,\lambda}
		(\bff e_{h,\varepsilon}^i,\bff e_{h,\varepsilon}^i)
		\right]
		\notag
		\\
		&\le
		C\norm{\overline{\bff e}_{h,\varepsilon}^{i+\frac12}}{\bb H^1}^2
		+
		C\norm{\bff\rho_h^{i+1}}{\bb H^1}^2
		+
		\varepsilon_0
		\norm{\dtt\bff e_{h,\varepsilon}^{i+1}}{h}^2
		\notag
		\\
		&\quad
		+
		\gamma
		\inpro{
			\bff s_{h,\varepsilon}^{i+1}
		}{
			\dtt\bff e_{h,\varepsilon}^{i+1}
		}_h
		-
		\frac{\gamma^2}{\alpha}
		\inpro{
			\bff s_{h,\varepsilon}^{i+1}
		}{
			\mathcal H_{h,\lambda}
			\overline{\bff e}_{h,\varepsilon}^{i+\frac12}
		}_h ,
		\label{eq:error-practical-combined}
	\end{align}
	where $\varepsilon_0>0$ can be chosen arbitrarily small. It remains to estimate the last two terms on the right-hand side of \eqref{eq:error-practical-combined}. The first residual term is estimated by
	\[
	\left|
	\gamma
	\inpro{
		\bff s_{h,\varepsilon}^{i+1}
	}{
		\dtt\bff e_{h,\varepsilon}^{i+1}
	}_h
	\right|
	\le
	C\varepsilon_{i+1}
	\norm{\dtt\bff e_{h,\varepsilon}^{i+1}}{h}
	\le
	\varepsilon_0
	\norm{\dtt\bff e_{h,\varepsilon}^{i+1}}{h}^2
	+
	C\varepsilon_{i+1}^2.
	\]
	For the second solver term, we use the identity \eqref{eq:def-discrete-shifted-H} to obtain
	\begin{align*}
		\inpro{
			\bff s_{h,\varepsilon}^{i+1}
		}{
			\mathcal H_{h,\lambda}
			\overline{\bff e}_{h,\varepsilon}^{i+\frac12}
		}_h
		&=
		-
		\mathfrak a_{h,\lambda}
		\left(
		\overline{\bff e}_{h,\varepsilon}^{i+\frac12},
		\bff s_{h,\varepsilon}^{i+1}
		\right).
	\end{align*}
	Therefore, by the continuity of $\mathfrak a_{h,\lambda}$, inverse
	estimate \eqref{eq:inverse-estimate}, the norm equivalence \eqref{eq:lumped-L2-equivalence}, and the bound \eqref{eq:s-epsilon-bound}, we have
	\begin{align*}
		\left|
		\inpro{
			\bff s_{h,\varepsilon}^{i+1}
		}{
			\mathcal H_{h,\lambda}
			\overline{\bff e}_{h,\varepsilon}^{i+\frac12}
		}_h
		\right|
		&\le
		C
		\norm{\overline{\bff e}_{h,\varepsilon}^{i+\frac12}}{\bb H^1}
		\norm{\bff s_{h,\varepsilon}^{i+1}}{\bb H^1}
		\\
		&\le
		Ch^{-1}
		\norm{\overline{\bff e}_{h,\varepsilon}^{i+\frac12}}{\bb H^1}
		\varepsilon_{i+1}
		\\
		&\le
		C\norm{\overline{\bff e}_{h,\varepsilon}^{i+\frac12}}{\bb H^1}^2
		+
		Ch^{-2}\varepsilon_{i+1}^2,
	\end{align*}
	where we also used Young's inequality.
	Choosing $\varepsilon_0>0$ sufficiently small and using \eqref{eq:rho-estimate}, we get
	\begin{align*}
		&\frac{k}{2}
		\norm{\dtt\bff e_{h,\varepsilon}^{i+1}}{h}^2
		+
		\frac{\gamma}{2\alpha}
		\left[
		\mathfrak a_{h,\lambda}
		(\bff e_{h,\varepsilon}^{i+1},\bff e_{h,\varepsilon}^{i+1})
		-
		\mathfrak a_{h,\lambda}
		(\bff e_{h,\varepsilon}^i,\bff e_{h,\varepsilon}^i)
		\right]
		\notag
		\\
		&\le
		Ck
		\norm{\overline{\bff e}_{h,\varepsilon}^{i+\frac12}}{\bb H^1}^2
		+
		Ck(h+k^2)^2
		+
		Ckh^{-2}\varepsilon_{i+1}^2.
	\end{align*}
	The remainder of the proof is now identical to the exact-solver case in Theorem~\ref{thm:energy-error-estimate}, leading to
	\eqref{eq:error-practical-energy}. Finally,
	\eqref{eq:error-practical-total} follows from the splitting
	\[
	\bff m_{h,\varepsilon}^i-\bff m^i
	=
	\bff e_{h,\varepsilon}^i
	+
	(R_h\bff m^i-\bff m^i),
	\]
	as well as Lemma~\ref{lem:DMI-Ritz-H1} and an argument analogous to \eqref{eq:eh0-est}. The remaining statements of the theorem follows immediately as a consequence of \eqref{eq:error-practical-total}. This completes the proof.
\end{proof}

\section{Numerical experiments}\label{sec:numerics}

In this section, we present numerical experiments illustrating the convergence behaviour and structure-preserving properties of the proposed inexact midpoint scheme. All computations are performed on~\textsc{FEniCS}~\cite{AlnaesEtal15} using continuous piecewise affine finite elements. The nonlinear systems are solved by the constraint-preserving fixed-point iteration in Algorithm~\ref{alg:constraint-preserving-fixed-point}.
The detail of each experiment is specified below.

\subsection{Spatial and temporal convergence rates}
\label{subsec:numerics-rates}

We first verify the convergence rates predicted by the \emph{a priori} error estimates.
Since no closed-form solution is available for the problem with the chiral
boundary condition, the errors are computed against refined numerical reference
solutions. Let $\bff m_{h,k,\varepsilon}^N$ denote the numerical solution at
final time $T=t_N$, and let $\bff m_{\rm ref}$ be the corresponding reference
solution. For $s=0,1$, we define
\[
e_s(h,k)
:=
\norm{
	I_{h_{\rm ref}}\bff m_{h,k,\varepsilon}^N
	-
	\bff m_{\rm ref}
}{\bb H^s}.
\]
Here $\bb H^0=\bb L^2$, and $I_{h_{\rm ref}}$ denotes interpolation onto the
reference finite element space.

For two consecutive refinement levels with parameters
$\rho_j$ and $\rho_{j+1}$, where $\rho=h$ in the spatial convergence test and
$\rho=k$ in the temporal convergence test, the observed convergence rate in the
$\bb H^s$ norm is computed by
\[
\operatorname{rate}_s
:=
\frac{
	\log\!\left(e_s(\rho_j)/e_s(\rho_{j+1})\right)
}{
	\log\!\left(\rho_j/\rho_{j+1}\right)
}.
\]
In this uniform refinement test, $\rho_{j+1}=\rho_j/2$, so this reduces to
\[
\operatorname{rate}_s
=
\log_2\!\left(
\frac{e_s(\rho_j)}{e_s(\rho_{j+1})}
\right).
\]

We now describe the first experiment. Let $\Omega=(0,1)^2$. The parameters are chosen as
\[
\alpha=0.5,\qquad
\gamma=1,\qquad
\kex=1,\qquad
\kdm=0.2,\qquad
\lambda=0,
\]
and the final time is $T=2\times10^{-3}$. The fixed-point tolerance is set to $\varepsilon_i=10^{-10}$ for all time-steps. The initial datum is
\[
\bff m^0(x,y)
=
\frac{
	\bigl(
	a\sin(\pi x)\sin(\pi y),
	a\cos(\pi x)\sin(\pi y),
	1
	\bigr)
}{
	\left|
	\bigl(
	a\sin(\pi x)\sin(\pi y),
	a\cos(\pi x)\sin(\pi y),
	1
	\bigr)
	\right|
},
\qquad \text{where }\, a=0.35.
\]
This gives a smooth, unit-length, non-equilibrium initial configuration.

For the spatial convergence test, we use a coupled refinement strategy: the mesh
is refined uniformly and the time step is chosen according to $k=0.05h^2$.
This choice is compatible with the contraction condition for the fixed-point
iteration; see Lemma~\ref{lem:constraint-preserving-contraction}. It also
ensures that the temporal contribution in the estimate is of higher order than
the leading spatial contribution. Indeed, the error estimate predicts
$e_1(h,k)\lesssim h+k^2$, and hence, under the choice $k=0.05h^2$, the temporal
term satisfies $k^2=O(h^4)$. Thus the observed $\bb H^1$ rate is expected to
reflect the spatial convergence rate. The reference solution is computed on a
fine $200\times200$ triangulation using the same time-step rule. The errors
$e_0$ and $e_1$ are shown in Figure~\ref{fig:spatial-convergence}. The observed
$\bb H^1$ rate is close to first order, while the $\bb L^2$ error decays at a
higher rate.

For the temporal convergence test, the spatial mesh is fixed to a
$64\times64$ triangulation, and the time step is successively halved over four
refinement levels. Let $k_{\min}$ denote the smallest, that is, the finest,
time step among these four tested values. The reference solution is computed on
the same spatial mesh with $k_{\rm ref}= k_{\min}/6$.
Since the spatial mesh is fixed, the comparison isolates the temporal discretisation
error. The corresponding errors are shown in
Figure~\ref{fig:temporal-convergence}, and the observed rates are close to
second order.

\begin{figure}[!htb]
	\centering
	\begin{subfigure}[b]{0.45\textwidth}
		\centering
		\begin{tikzpicture}
			\begin{axis}[
				title={Spatial convergence, $k=0.05h^2$},
				height=1\textwidth,
				width=1\textwidth,
				xlabel={$1/h$},
				ylabel={$e_s(h,k)$},
				xmode=log,
				ymode=log,
				log basis x={2},
				log basis y={10},
				legend pos=south west,
				legend cell align=left,
				]
				
				\addplot+[mark=*,red] coordinates {
					(4,  2.912e-2)
					(8,  9.700e-3)
					(16, 3.180e-3)
					(32, 7.780e-4)
					(64, 1.740e-4)
				};
				
				\addplot+[mark=triangle*,blue] coordinates {
					(4,  4.640e-1)
					(8,  3.300e-1)
					(16, 1.810e-1)
					(32, 7.110e-2)
					(64, 2.710e-2)
				};
				
				\addplot+[dotted,no marks,blue,domain=16:64] {1.1/x};
				\addplot+[dashed,no marks,red,domain=16:64] {0.35/x^2};
				
				\legend{
					\small{$e_0$},
					\small{$e_1$},
					\small{order $1$},
					\small{order $2$}
				}
			\end{axis}
		\end{tikzpicture}
		\caption{Spatial convergence under the coupled refinement $k=0.05h^2$.}
		\label{fig:spatial-convergence}
	\end{subfigure}
	\hspace{1em}
	\begin{subfigure}[b]{0.45\textwidth}
		\centering
		\begin{tikzpicture}
			\begin{axis}[
				title={Temporal convergence, $h=1/64$},
				height=1\textwidth,
				width=1\textwidth,
				xlabel={$N=T/k$},
				ylabel={$e_s(h,k)$},
				xmode=log,
				ymode=log,
				log basis x={2},
				log basis y={10},
				legend pos=south west,
				legend cell align=left,
				]
				
				\addplot+[mark=*,red] coordinates {
					(164, 1.276e-7)
					(328, 3.184e-8)
					(656, 7.92e-9)
					(1312, 1.94e-9)
				};
				
				\addplot+[mark=triangle*,blue] coordinates {
					(164, 7.27e-6)
					(328, 1.82e-6)
					(656, 4.52e-7)
					(1312, 1.11e-7)
				};
				
				\addplot+[dashed,no marks,black,domain=328:1312] {0.01/x^2};
				
				\legend{
					\small{$e_0$},
					\small{$e_1$},
					\small{order $2$}
				}
			\end{axis}
		\end{tikzpicture}
		\caption{Temporal convergence on a fixed $64\times64$ mesh.}
		\label{fig:temporal-convergence}
	\end{subfigure}
	\caption{Spatial and temporal convergence of the inexact midpoint scheme (Algorithm~\ref{alg:constraint-preserving-fixed-point}).}
	\label{fig:convergence-rates}
\end{figure}

\subsection{Influence of the fixed-point tolerance}
\label{subsec:numerics-fixed-point-tolerance}

We next examine the influence of the stopping tolerance in the constraint-preserving fixed-point iteration. This experiment is designed to illustrate the solver-error contribution in Theorem~\ref{thm:error-practical-constraint-preserving}. For a uniform tolerance $\varepsilon_{i+1}=\varepsilon_{\rm fp}$, the solver-error term in \eqref{eq:error-practical-total} becomes
\[
\left(
k\sum_{i=0}^{N-1}h^{-2}\varepsilon_{\rm fp}^2
\right)^{\frac12}
=
T^{\frac12}\frac{\varepsilon_{\rm fp}}{h}.
\]
Thus, for fixed $h$ and $k$, the theorem predicts that the additional error due to the inexact fixed-point solve is controlled by $\varepsilon_{\rm fp}/h$.

To test this behaviour, we fix the same parameters as before, fix the spatial mesh and the time step, and vary only the fixed-point tolerance. More precisely, we use a $64\times64$ triangulation and choose $k=0.05h^2$.
A reference solution $\bff m_{h,k,\varepsilon_{\rm ref}}^N$ is computed using a much smaller tolerance $\varepsilon_{\rm ref}=10^{-13}$. For several values of $\varepsilon_{\rm fp}>\varepsilon_{\rm ref}$, we compute
\[
\delta_s(\varepsilon_{\rm fp})
:=
\norm{
	\bff m_{h,k,\varepsilon_{\rm fp}}^N
	-
	\bff m_{h,k,\varepsilon_{\rm ref}}^N
}{\bb H^s},
\qquad s=0,1.
\]
Since the mesh and time step are fixed throughout this test, the quantities $\delta_s(\varepsilon_{\rm fp})$ measure only the effect of stopping the fixed-point iteration at different tolerances.

Figure~\ref{fig:fixed-point-tolerance} plots $\delta_0$ and $\delta_1$ against $\varepsilon_{\rm fp}/h$. The errors decrease as the tolerance is tightened, confirming that the practical inexact scheme approaches the fully converged midpoint solution. The decay is not perfectly linear at every point, which is expected since the stopping criterion changes the number of fixed-point iterations only discretely. Nevertheless, the observed behaviour is consistent with the solver-error contribution in Theorem~\ref{thm:error-practical-constraint-preserving}.

\begin{figure}[!htb]
	\centering
	\begin{tikzpicture}
		\begin{axis}[
			title={Influence of the fixed-point tolerance},
			height=0.50\textwidth,
			width=0.65\textwidth,
			xlabel={$\varepsilon_{\rm fp}/h$},
			ylabel={$\delta_s(\varepsilon_{\rm fp})$},
			xmode=log,
			ymode=log,
			legend pos=north west,
			legend cell align=left,
			]
			
			\addplot+[mark=*,red] coordinates {
				(6.400000e-08,4.735463e-14)
				(1.920000e-07,6.710274e-14)
				(6.400000e-07,8.759755e-13)
				(1.920000e-06,1.546222e-12)
				(6.400000e-06,1.916104e-12)
				(1.920000e-05,3.331702e-11)
				(6.400000e-05,6.180138e-11)
				(1.920000e-04,8.171541e-11)
				(6.400000e-04,1.523987e-09)
				(1.920000e-03,2.881410e-09)
				(6.400000e-03,3.917785e-09)
			};
			
			\addplot+[mark=triangle*,blue] coordinates {
				(6.400000e-08,5.283138e-12)
				(1.920000e-07,7.484853e-12)
				(6.400000e-07,8.517410e-11)
				(1.920000e-06,1.506198e-10)
				(6.400000e-06,1.867658e-10)
				(1.920000e-05,2.798550e-09)
				(6.400000e-05,5.201055e-09)
				(1.920000e-04,6.880745e-09)
				(6.400000e-04,1.079179e-07)
				(1.920000e-03,2.044520e-07)
				(6.400000e-03,2.781494e-07)
			};
			
			\addplot+[dashed,no marks,black,domain=6.4e-8:6.4e-3,samples=2] {1.0e-5*x};
			
			\legend{
				\small{$\delta_0$},
				\small{$\delta_1$},
				\small{order $1$}
			}
		\end{axis}
	\end{tikzpicture}
	\caption{Influence of the fixed-point tolerance on the final-time error for fixed $h=1/64$ and $k=0.05h^2$.}
	\label{fig:fixed-point-tolerance}
\end{figure}

\subsection{Skyrmion relaxation and structure preservation}
\label{subsec:numerics-skyrmion}

We next test the robustness of the proposed scheme on a physically relevant chiral configuration. The purpose of this experiment is to illustrate the
behaviour of the inexact midpoint scheme when applied to a localised
skyrmion-type texture, and to verify the preservation of the geometric and
energetic structures during its dissipative relaxation.

Let $\Omega=(0,1)^2$. We consider a Bloch-type skyrmion initial state. Set
\[
r
:=
\sqrt{\left(x-\frac12\right)^2+\left(y-\frac12\right)^2} 
\quad \text{and} \quad 
\theta(r)
:=
\frac{\pi}{2}
\left(
1-\tanh\left(\frac{r-R}{w}\right)
\right),
\]
with $R>0$ and $w>0$ to be specified later.
For $r>0$, the initial magnetisation is given by
\[
\bff m^0(x,y)
=
\left(
-\left[\frac{y-\frac12}{r}\right] \sin\theta(r),\,
\left[\frac{x-\frac12}{r}\right] \sin\theta(r),\,
\cos\theta(r)
\right),
\]
while at the centre of the skyrmion we set
\[
\bff m^0\left(\frac12,\frac12\right)
=
(0,0,-1).
\]
In the experiment, we take $R=0.18$ and $w=0.04$.
This initial condition describes a localised Bloch-type skyrmion texture, namely the magnetisation points approximately downward near the centre of the domain, upward away from the core, and rotates tangentially in the transition layer between the two regions.

The parameters are chosen as
\[
\alpha=0.5,
\qquad
\gamma=1,
\qquad
\kex=1,
\qquad
\kdm=2\pi,
\qquad
\lambda=0.
\]
We use a $64\times64$ triangulation and choose $k=0.02h^2$ and $T=0.2$.
The fixed-point tolerance is set to $\varepsilon_i=10^{-10}$ for all time-steps.
To assess the structure-preserving behaviour of the method, we monitor the nodal length defect
\begin{equation}\label{eq:nodal-length-defect}
\mathrm{d}(t_i)
:=
\max_{z\in\mathcal N_h}
\left|
|\bff m_h^i(z)|-1
\right|,
\end{equation}
and the discrete energy
\begin{equation}\label{eq:discrete-energy}
\mathcal{E}(\bff m_h^i)
=
\frac{\kex}{2}
\norm{\nabla \bff m_h^i}{\bb L^2}^2
+
\kdm
\inpro{\bff m_h^i}{\nabla\times\bff m_h^i}.
\end{equation}

The results are shown in Figures~\ref{fig:skyrmion-snapshots} and~\ref{fig:skyrmion-diagnostics}. The numerical solution preserves the nodal
unit-length constraint up to round-off error throughout the simulation. The
discrete energy decreases over time, confirming the dissipative behaviour of
the scheme in this chiral setting. The snapshots show that the localised
skyrmion core persists during the relaxation: the central region remains
oriented approximately downward while the surrounding magnetisation remains
oriented upward. This persistence is consistent with the robustness usually
associated with skyrmion-type textures, whose localised reversed core and
chirality are important features in spintronic applications.

\begin{figure}[!htb]
	\centering
	\begin{subfigure}[b]{0.29\textwidth}
		\centering
		\includegraphics[width=\textwidth]{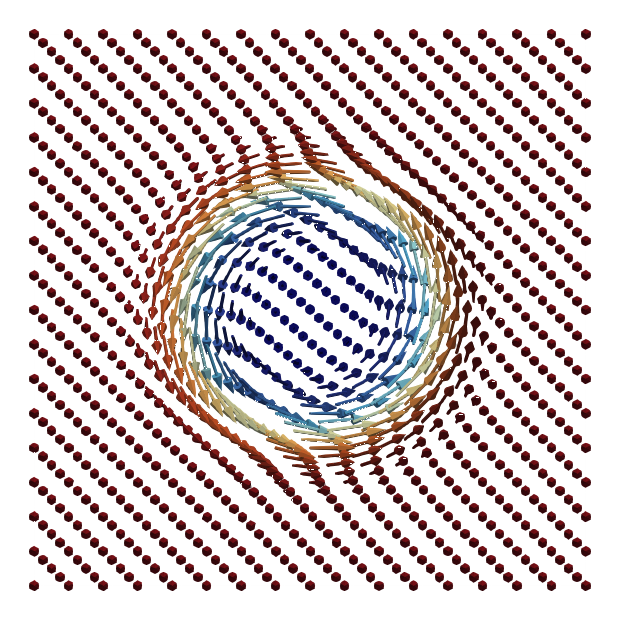}
		\caption{$t=0$}
	\end{subfigure}
	\begin{subfigure}[b]{0.29\textwidth}
		\centering
		\includegraphics[width=\textwidth]{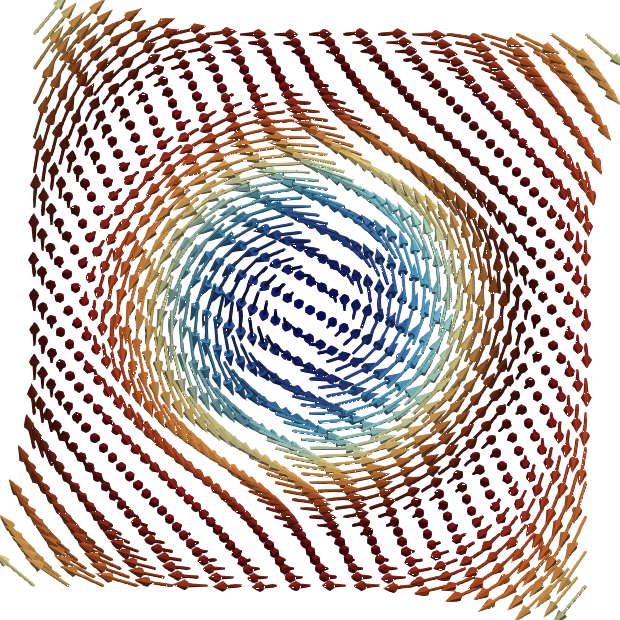}
		\caption{$t=0.01$}
	\end{subfigure}
	\begin{subfigure}[b]{0.29\textwidth}
		\centering
		\includegraphics[width=\textwidth]{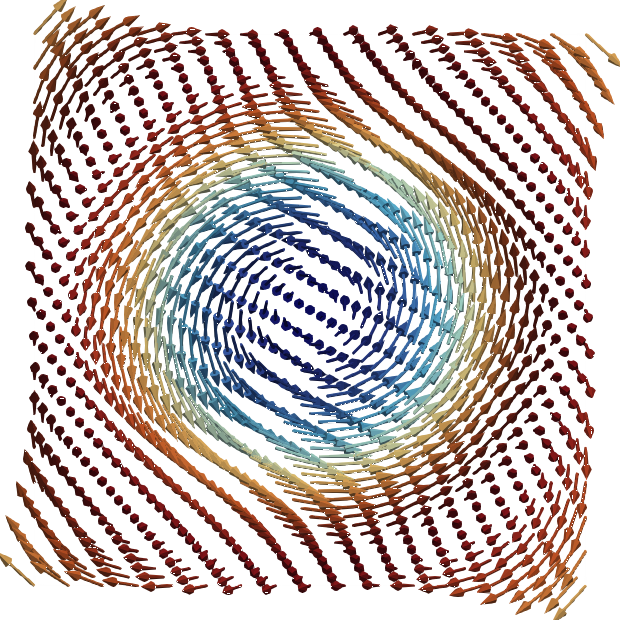}
		\caption{$t=0.03$}
	\end{subfigure}
	\begin{subfigure}[b]{0.09\textwidth}
		\centering
		\includegraphics[width=\textwidth]{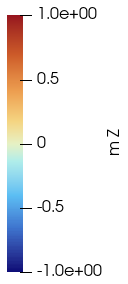}
	\end{subfigure}
	\caption{Evolution of the Bloch-type skyrmion initial state under the LLG equation with bulk DMI. The colour indicates the $z$-component of the magnetisation.}
	\label{fig:skyrmion-snapshots}
\end{figure}

\begin{figure}[!htb]
	\centering
	\includegraphics[width=0.48\textwidth]{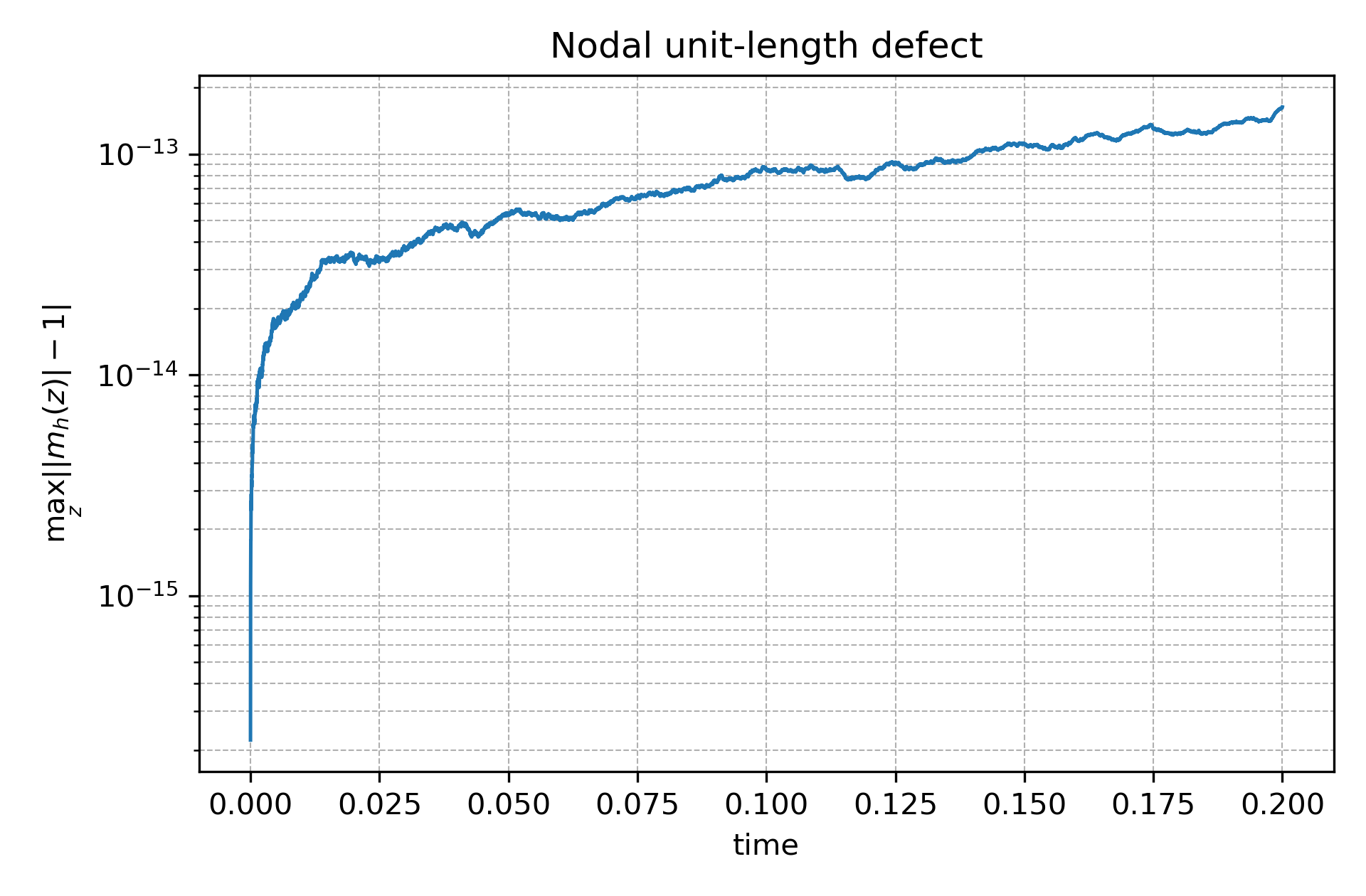}
	\includegraphics[width=0.48\textwidth]{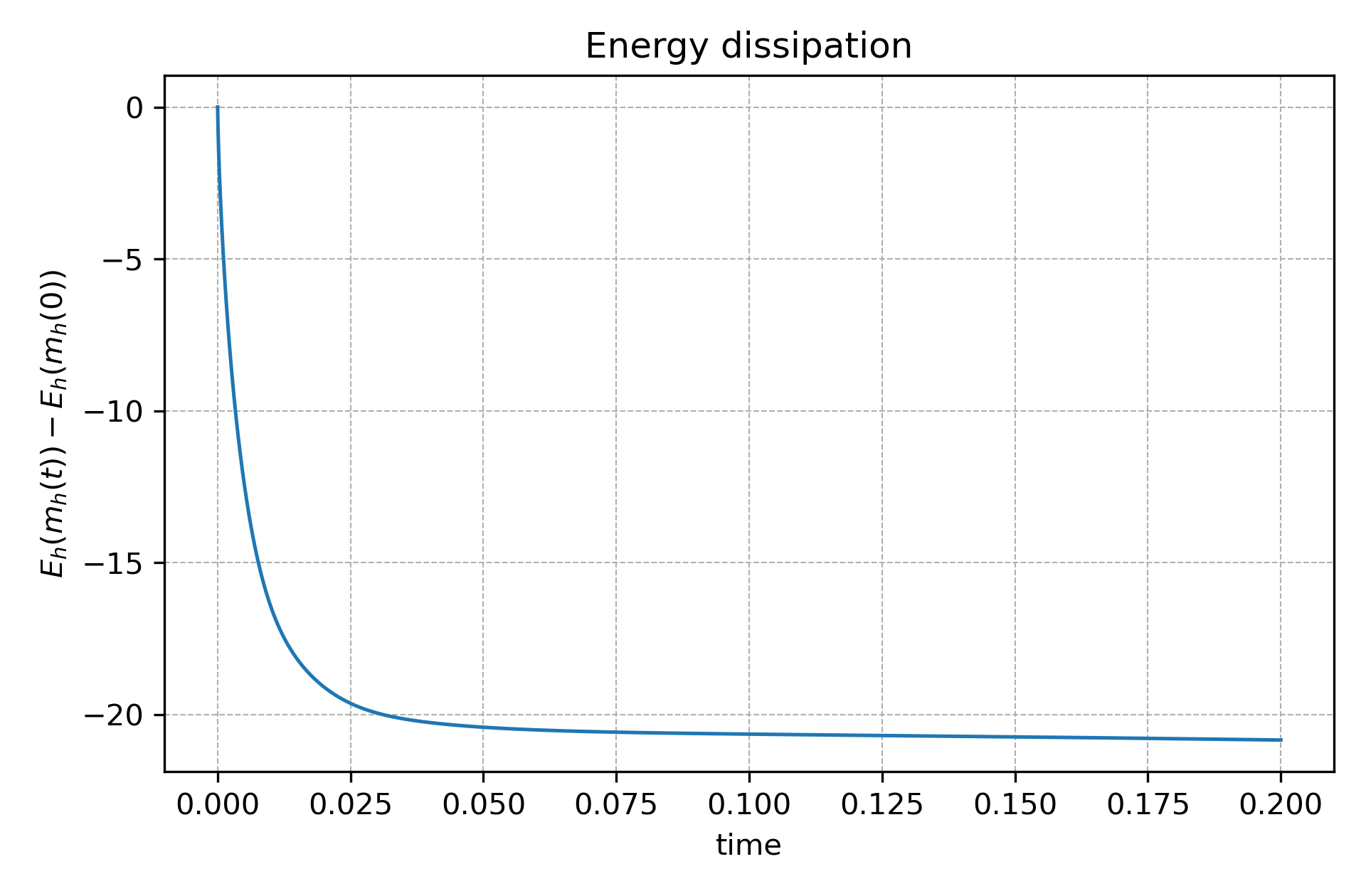}
	\caption{Nodal length defect $\mathrm{d}(t_i)$ and discrete energy change $\mathcal{E}(\bff{m}_h^i)-\mathcal{E}(\bff{m}_h^0)$ for the skyrmion relaxation test.}
	\label{fig:skyrmion-diagnostics}
\end{figure}

\subsection{Bubbling experiment with chiral boundary condition}
\label{subsec:numerics-bartels-prohl-chiral}

We finally test the proposed method on a high-gradient benchmark inspired by the bubbling experiment of Bartels and Prohl~\cite{BarPro06}. The purpose of this experiment is twofold. First, in the exchange-only case $\kdm=0$, it provides a reference test in which the maximum gradient is known numerically to develop a rapid growth. Second, for $\kdm\ne0$, it allows us to examine how the bulk DMI term and the corresponding chiral natural boundary condition affect the concentration and possible unwinding of the bubble.

Here, we take $\Omega=\left(-\frac12,\frac12\right)^2$. 
To describe the initial data, let $\bff x=(x_1,x_2)$, $r=|\bff x|$, and $A(r)=\frac1s(1-2r)^4$.
We then set
\[
\bff m^0(\bff x)
=
\begin{cases}
	(0,0,-1),
	& r\ge \frac12,\\[0.5em]
	\displaystyle
	\frac{
		\bigl(2x_1A(r),\,2x_2A(r),\,A(r)^2-r^2\bigr)
	}{
		A(r)^2+r^2
	},
	& r<\frac12 .
\end{cases}
\]
The discrete initial condition is given by $I_h \bff m^0$.
The parameters of the problem are chosen as
\[
\alpha=1.0,
\qquad
\gamma=2.0,
\qquad
\kex=1,
\qquad
\lambda=0,
\qquad
s=4.
\]
 We compare the five DMI strengths
\[
\kdm=0,\quad \kdm=\pm1,\quad \kdm=\pm2 .
\]
The parameter $\kdm=0$ corresponds to an effective field consisting of only the exchange interaction considered in~\cite{BarPro06}.
In this experiment, a $32\times 32$ triangulation is fixed throughout. We choose $k=0.05 h^2$ and run the experiment up to final time $T=0.1$.
The fixed-point tolerance is set to $\varepsilon_i=10^{-10}$ for all time-steps.

We monitor six diagnostic quantities: the nodal length defect \eqref{eq:nodal-length-defect}, the discrete energy change $\mathcal{E}(\bff{m}_h^i)-\mathcal{E}(\bff{m}_h^0)$, the exchange energy $\mathcal{E}_{\rm{exc}}(\bff{m}_h^i)$, the DMI energy $\mathcal{E}_{\rm{DMI}}(\bff{m}_h^i)$, the maximum gradient seminorm $G_\infty^i$, and the number of fixed-point iterations at each time-step. Here, $\mathcal{E}_{\rm{exc}}$ and $\mathcal{E}_{\rm{DMI}}$ were defined in \eqref{eq:energy}, and
\[
G_\infty^i
:=
\max_{K\in\mathcal T_h}
\norm{\nabla\bff m_h^i}{L^\infty(K)} .
\]

Figures~\ref{fig:bp-chiral-snapshots-minus} and~\ref{fig:bp-chiral-snapshots-plus}  show snapshots of the magnetisation for $\kdm=-2$ and $\kdm=+2$, respectively. When $\kdm=-2$, the initially localised bubble undergoes a rapid concentration near the origin, producing a strongly localised structure. In contrast, when $\kdm=+2$, the bubble appears to unwind smoothly. 

The corresponding diagnostic quantities are shown in Figure~\ref{fig:bp-chiral-diagnostics}. The nodal length defect remains close to machine precision throughout the computation, confirming that the fixed-point iteration preserves the nodal unit-length constraint. The discrete energy decreases in time as shown. The plot of $G_\infty^i$ exhibits the high-gradient behaviour, indicating a possible gradient blow-up when $\kdm\leq 0$. On the other hand, in the tested range, positive values of $\kdm$ appear to delay the gradient concentration or suppress it altogether. The fixed-point iteration count remains moderate, although it becomes more demanding near the time interval where the gradient becomes large.

A notable feature of the DMI comparison is the asymmetry with respect to the sign of $\kdm$. In the present experiment, negative values of $\kdm$ accelerate the growth of $G_\infty^i$, while positive values of $\kdm$ tend to suppress the concentration and promote relaxation. This behaviour appears to be consistent with the chiral nature of the DMI interaction. The initial bubble has a prescribed handedness, and changing the sign of $\kdm$ changes whether the DMI contribution favours or penalises this chirality. In particular, the chiral boundary condition may either assist the concentration of the bubble or facilitate its unwinding, depending on the sign of $\kdm$. A systematic investigation of this is beyond the scope of the present work, but appears to be an interesting topic for further study.

\begin{figure}[!htb]
	\centering
	\begin{subfigure}[b]{0.29\textwidth}
		\centering
		\includegraphics[width=\textwidth]{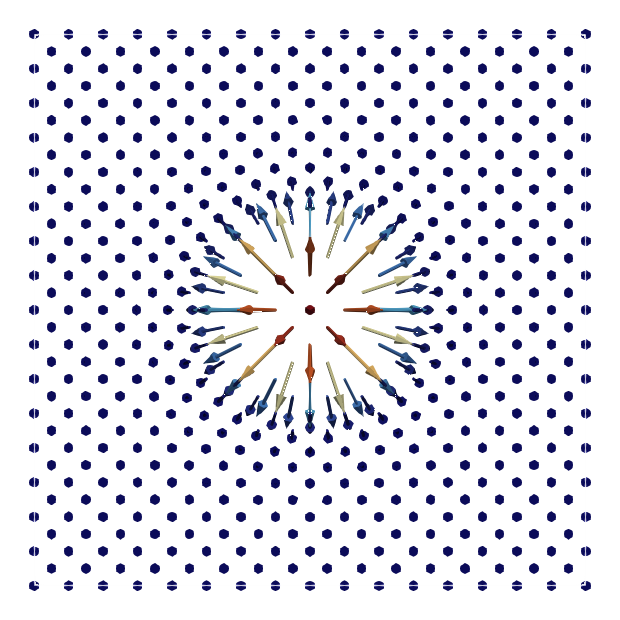}
		\caption{$t=0$}
	\end{subfigure}
	\begin{subfigure}[b]{0.29\textwidth}
		\centering
		\includegraphics[width=\textwidth]{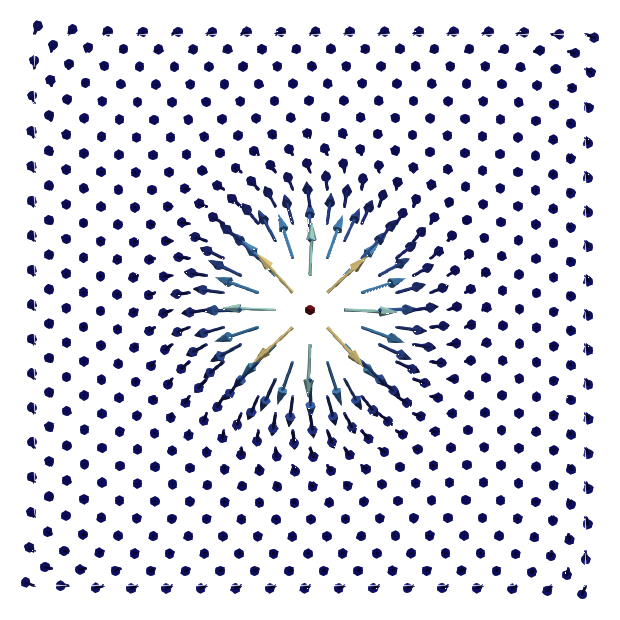}
		\caption{$t=0.005$}
	\end{subfigure}
	\begin{subfigure}[b]{0.29\textwidth}
		\centering
		\includegraphics[width=\textwidth]{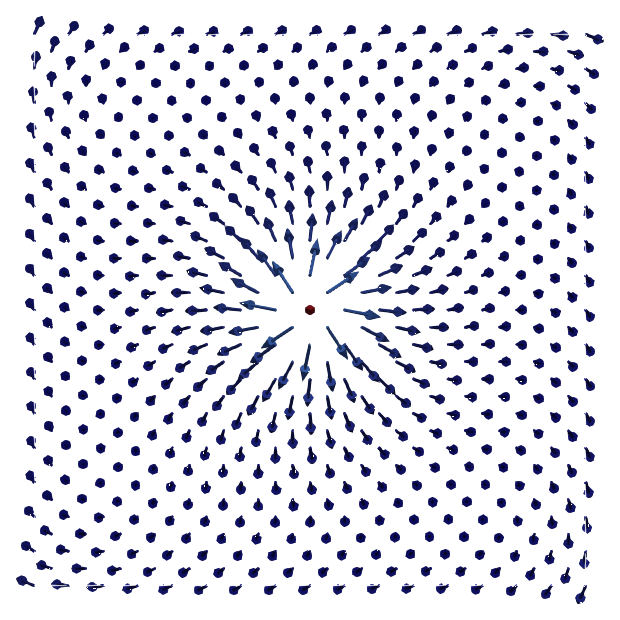}
		\caption{$t=0.01$}
	\end{subfigure}
	\begin{subfigure}[b]{0.09\textwidth}
		\centering
		\includegraphics[width=\textwidth]{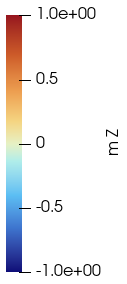}
	\end{subfigure}
	\begin{subfigure}[b]{0.29\textwidth}
		\centering
		\includegraphics[width=\textwidth]{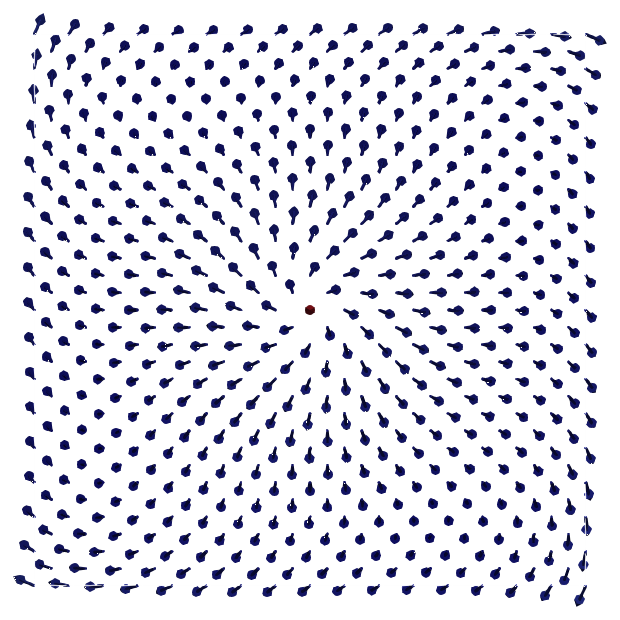}
		\caption{$t=0.015$}
	\end{subfigure}
	\begin{subfigure}[b]{0.29\textwidth}
		\centering
		\includegraphics[width=\textwidth]{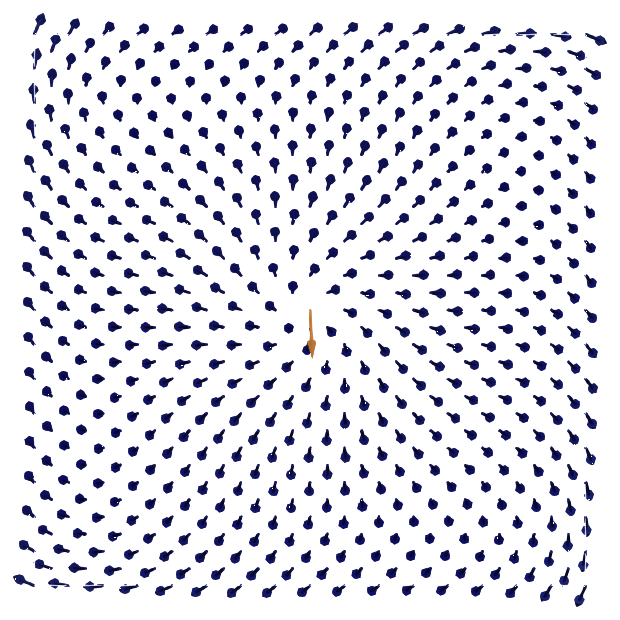}
		\caption{$t=0.016$}
	\end{subfigure}
	\begin{subfigure}[b]{0.29\textwidth}
		\centering
		\includegraphics[width=\textwidth]{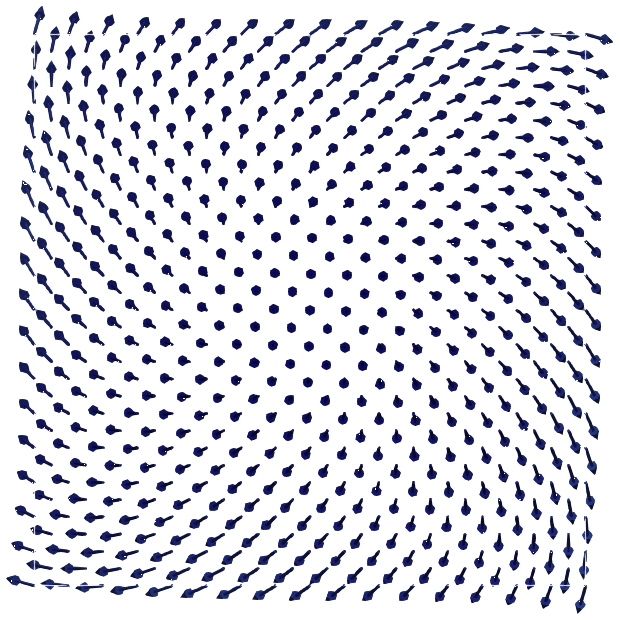}
		\caption{$t=0.03$}
	\end{subfigure}
	\begin{subfigure}[b]{0.09\textwidth}
		\centering
		\includegraphics[width=\textwidth]{blow_legend.png}
	\end{subfigure}
	\caption{Snapshots of the bubbling experiment with $\kdm=-2$. The snapshots illustrate the concentration of the bubble under the chiral boundary condition. The colour indicates the $z$-component of the magnetisation.}
	\label{fig:bp-chiral-snapshots-minus}
\end{figure}

\begin{figure}[!htb]
	\centering
	\begin{subfigure}[b]{0.29\textwidth}
		\centering
		\includegraphics[width=\textwidth]{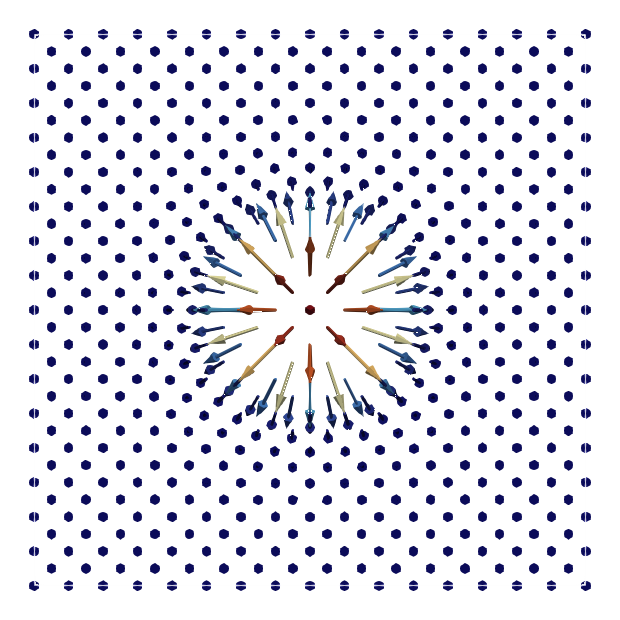}
		\caption{$t=0$}
	\end{subfigure}
	\begin{subfigure}[b]{0.29\textwidth}
		\centering
		\includegraphics[width=\textwidth]{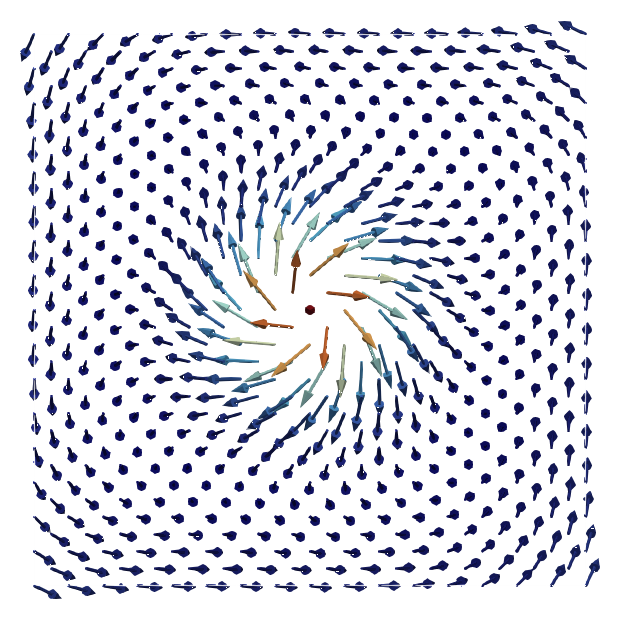}
		\caption{$t=0.02$}
	\end{subfigure}
	\begin{subfigure}[b]{0.29\textwidth}
		\centering
		\includegraphics[width=\textwidth]{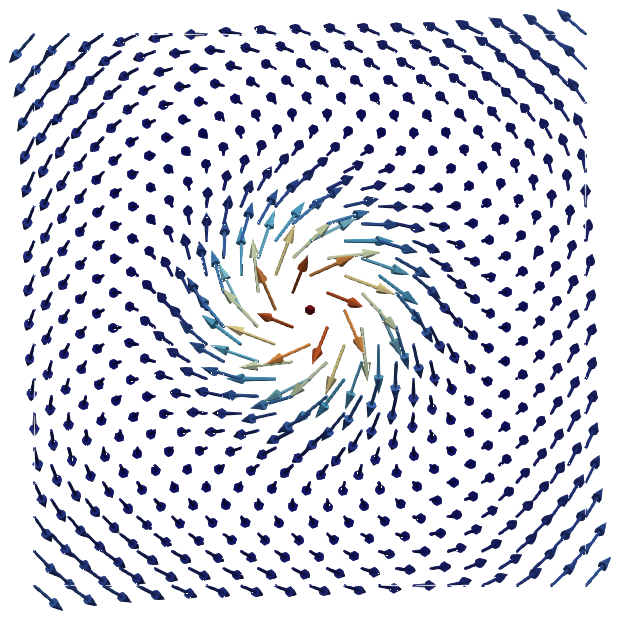}
		\caption{$t=0.04$}
	\end{subfigure}
	\begin{subfigure}[b]{0.09\textwidth}
		\centering
		\includegraphics[width=\textwidth]{blow_legend.png}
	\end{subfigure}
	\begin{subfigure}[b]{0.29\textwidth}
		\centering
		\includegraphics[width=\textwidth]{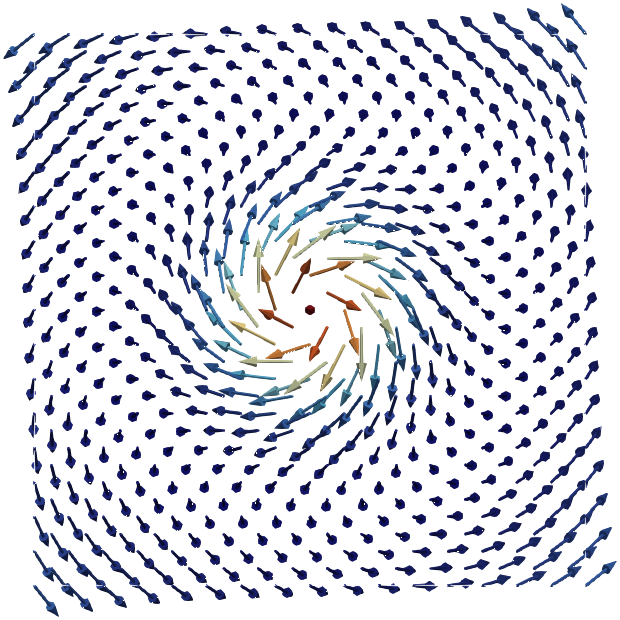}
		\caption{$t=0.06$}
	\end{subfigure}
	\begin{subfigure}[b]{0.29\textwidth}
		\centering
		\includegraphics[width=\textwidth]{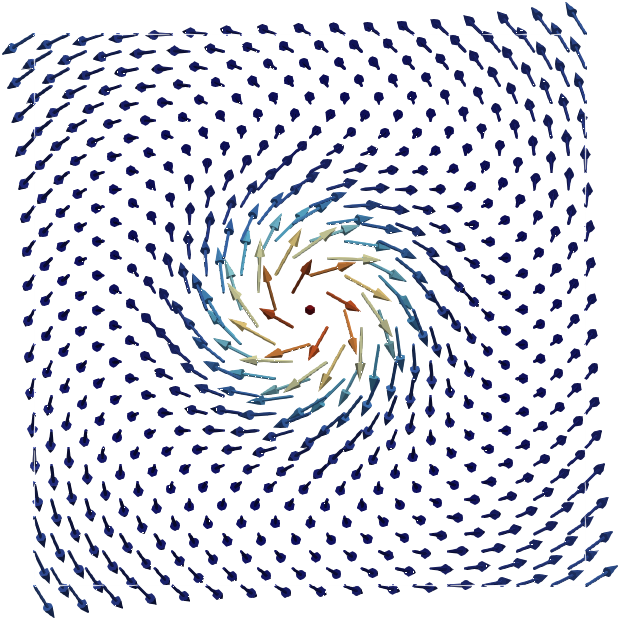}
		\caption{$t=0.08$}
	\end{subfigure}
	\begin{subfigure}[b]{0.29\textwidth}
		\centering
		\includegraphics[width=\textwidth]{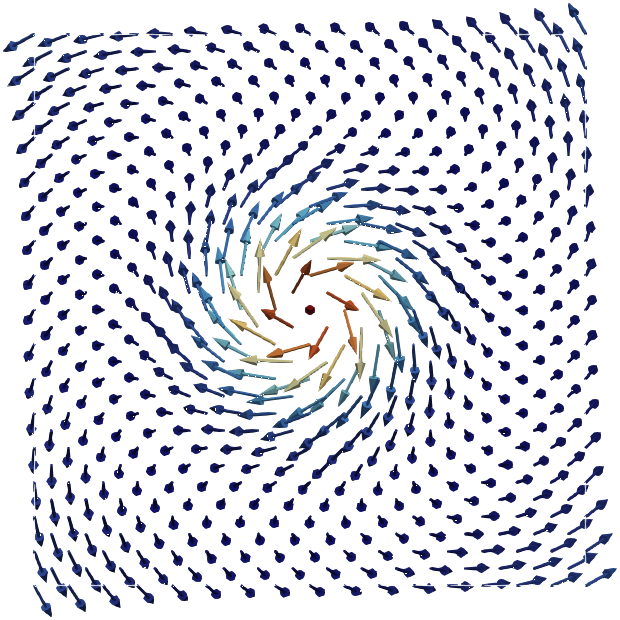}
		\caption{$t=0.1$}
	\end{subfigure}
	\begin{subfigure}[b]{0.09\textwidth}
		\centering
		\includegraphics[width=\textwidth]{blow_legend.png}
	\end{subfigure}
	\caption{Snapshots of the bubbling experiment with $\kdm=+2$. The snapshots illustrate the unwinding of the bubble under the chiral boundary condition. The colour indicates the $z$-component of the magnetisation.}
	\label{fig:bp-chiral-snapshots-plus}
\end{figure}

\begin{figure}[!htb]
	\centering
	\begin{subfigure}{0.48\textwidth}
		\centering
		\includegraphics[width=\textwidth]{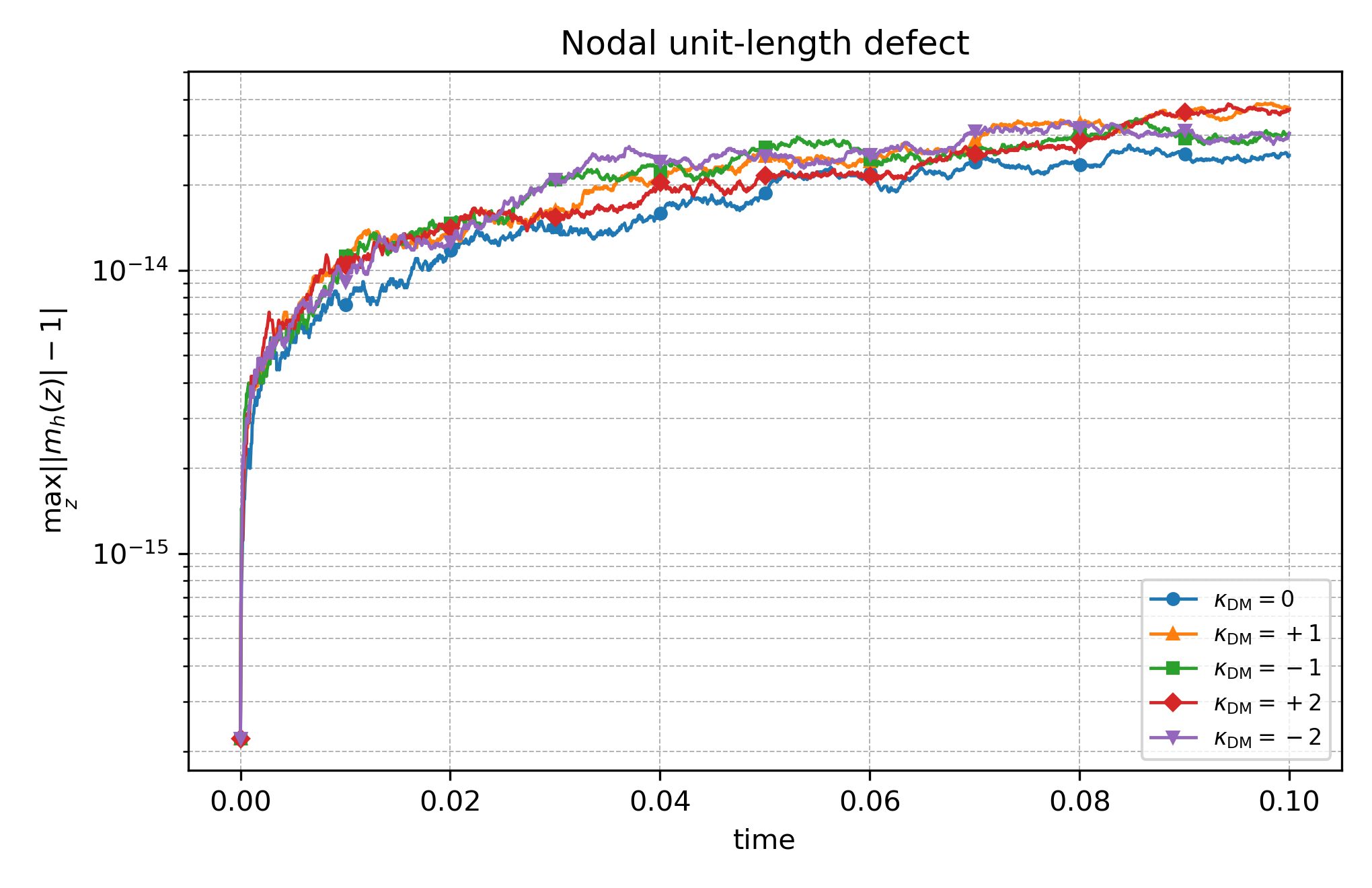}
		\caption{Nodal length defect $\mathrm{d}(t_i)$.}
	\end{subfigure}
	\begin{subfigure}{0.48\textwidth}
		\centering
		\includegraphics[width=\textwidth]{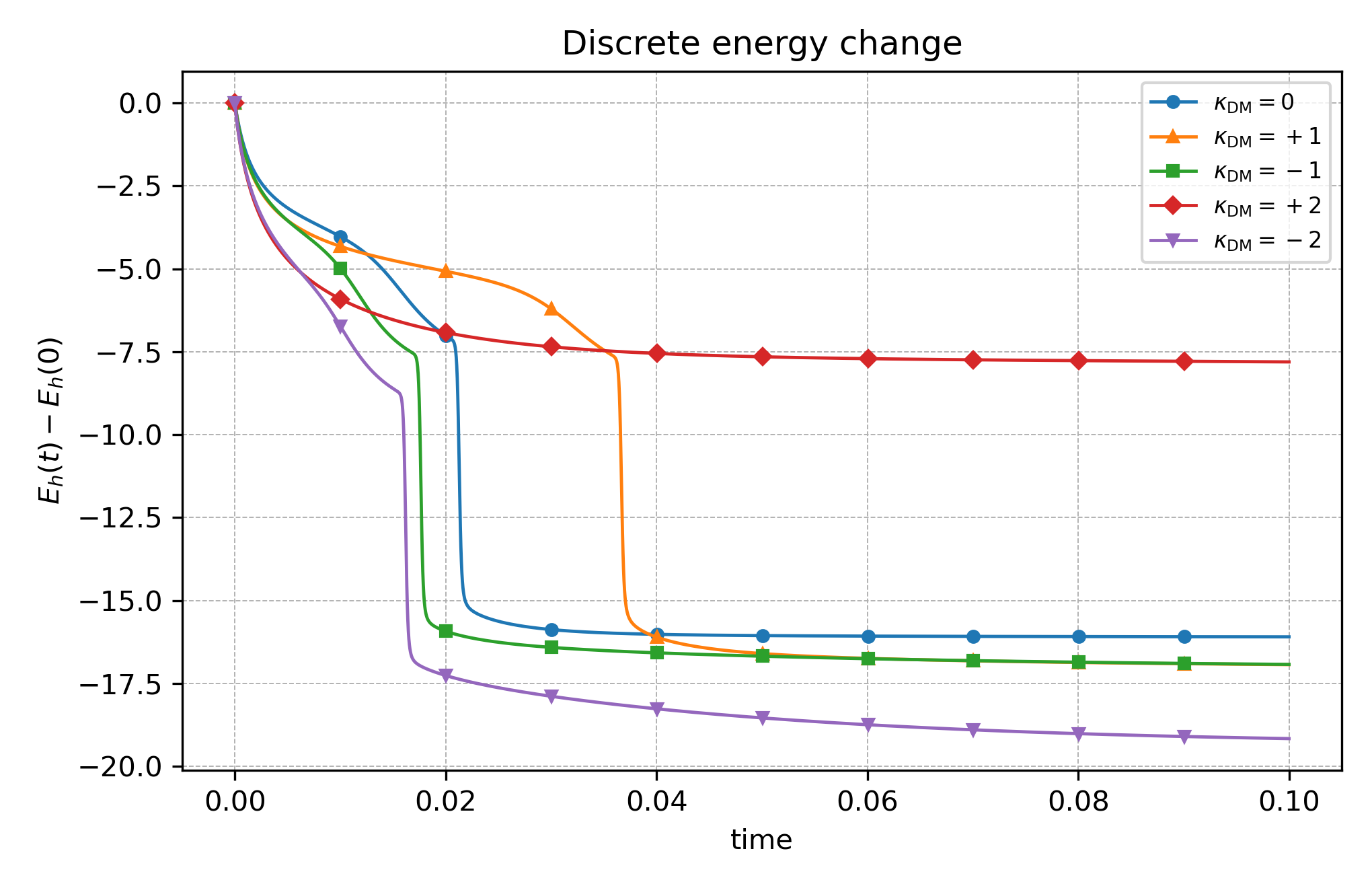}
		\caption{Discrete energy change $\mathcal{E}(\bff{m}_h^i)-\mathcal{E}(\bff{m}_h^0)$.}
	\end{subfigure}
	
\vspace{3ex}
	
	\begin{subfigure}{0.48\textwidth}
		\centering
		\includegraphics[width=\textwidth]{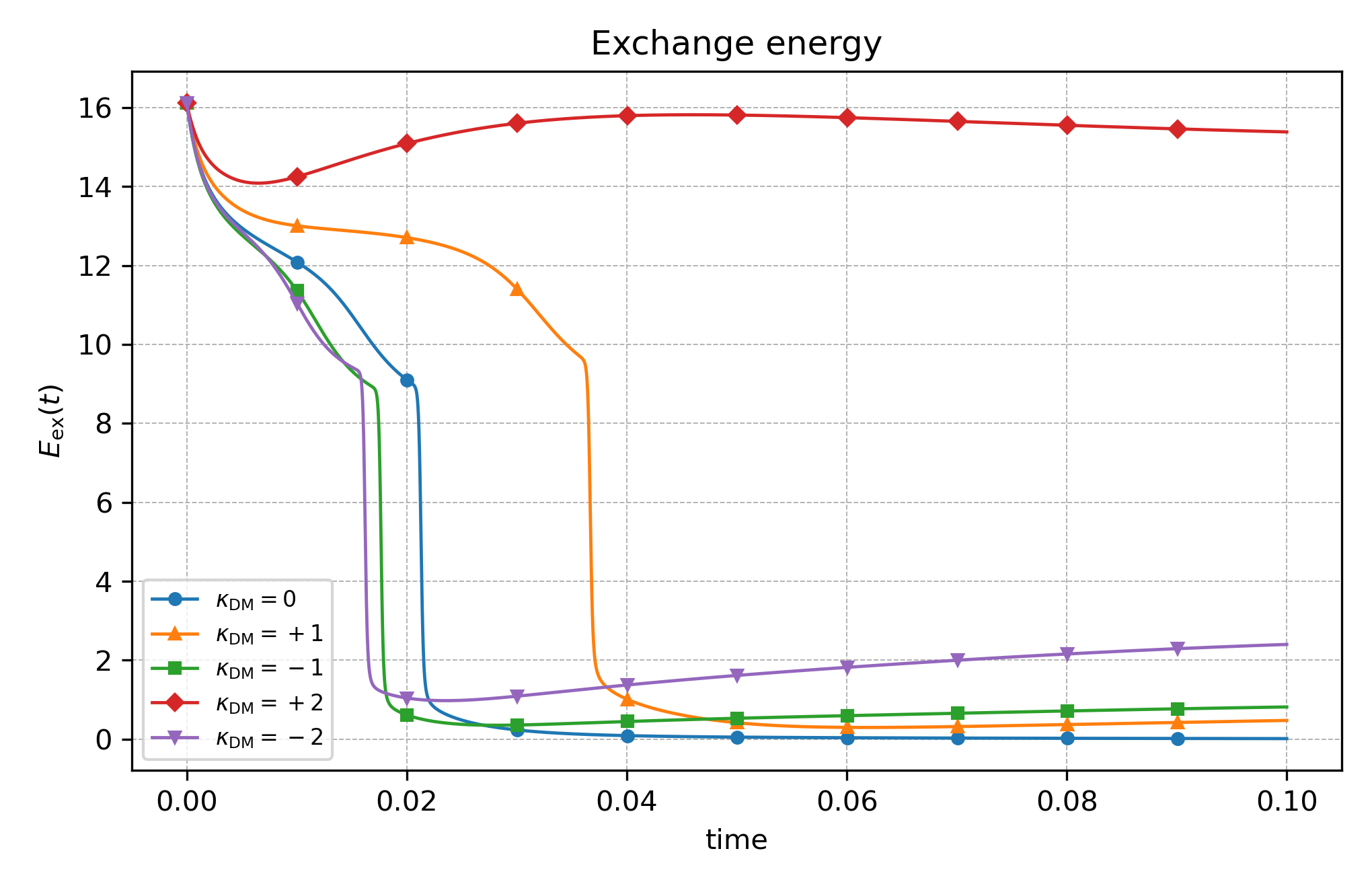}
		\caption{Exchange energy $\mathcal{E}_{\rm{exc}}(\bff{m}_h^i)$.}
	\end{subfigure}
	\begin{subfigure}{0.48\textwidth}
		\centering
		\includegraphics[width=\textwidth]{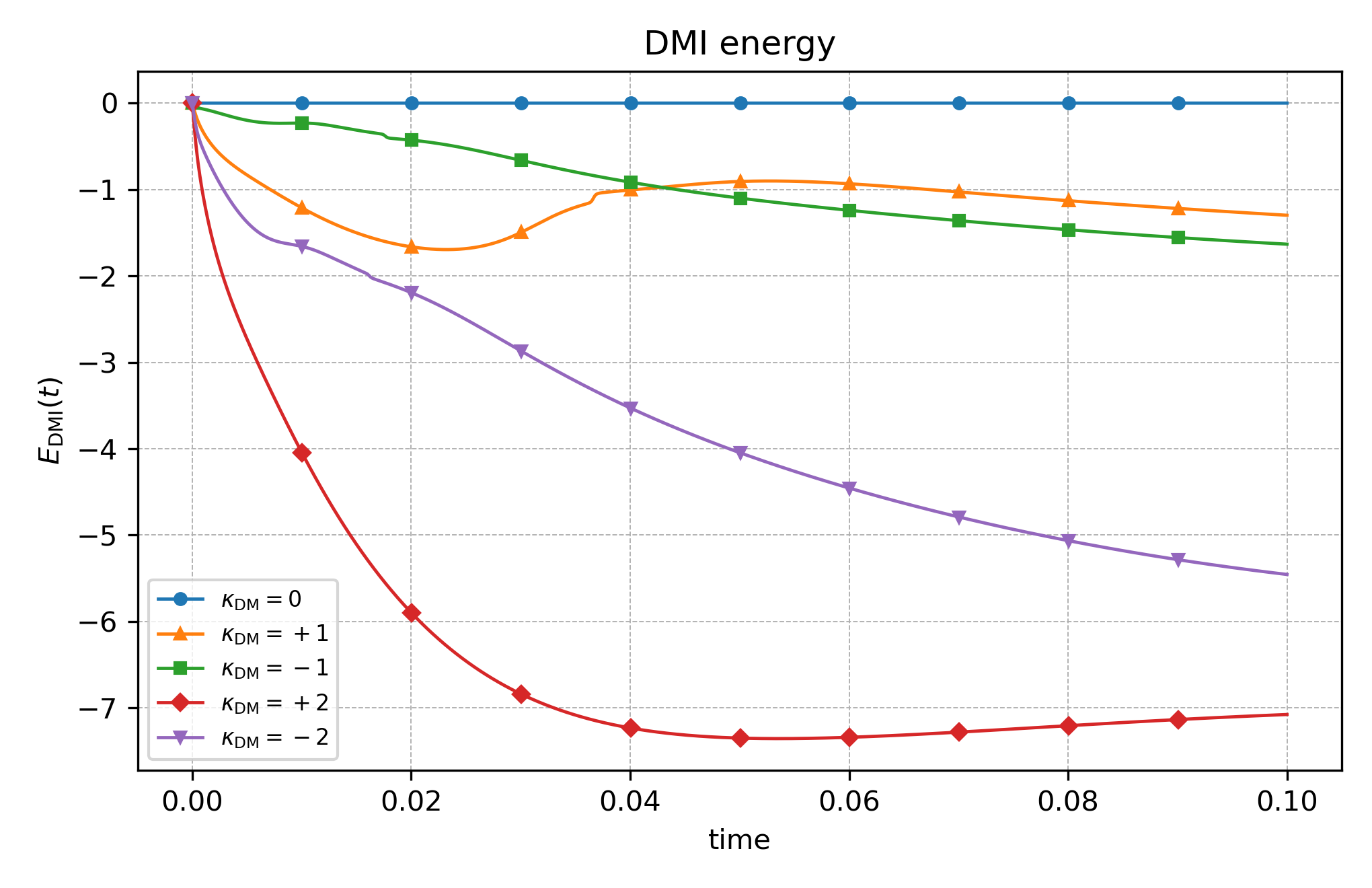}
		\caption{DMI energy $\mathcal{E}_{\rm{DMI}}(\bff{m}_h^i)$.}
	\end{subfigure}

\vspace{3ex}

\begin{subfigure}{0.48\textwidth}
	\centering
	\includegraphics[width=\textwidth]{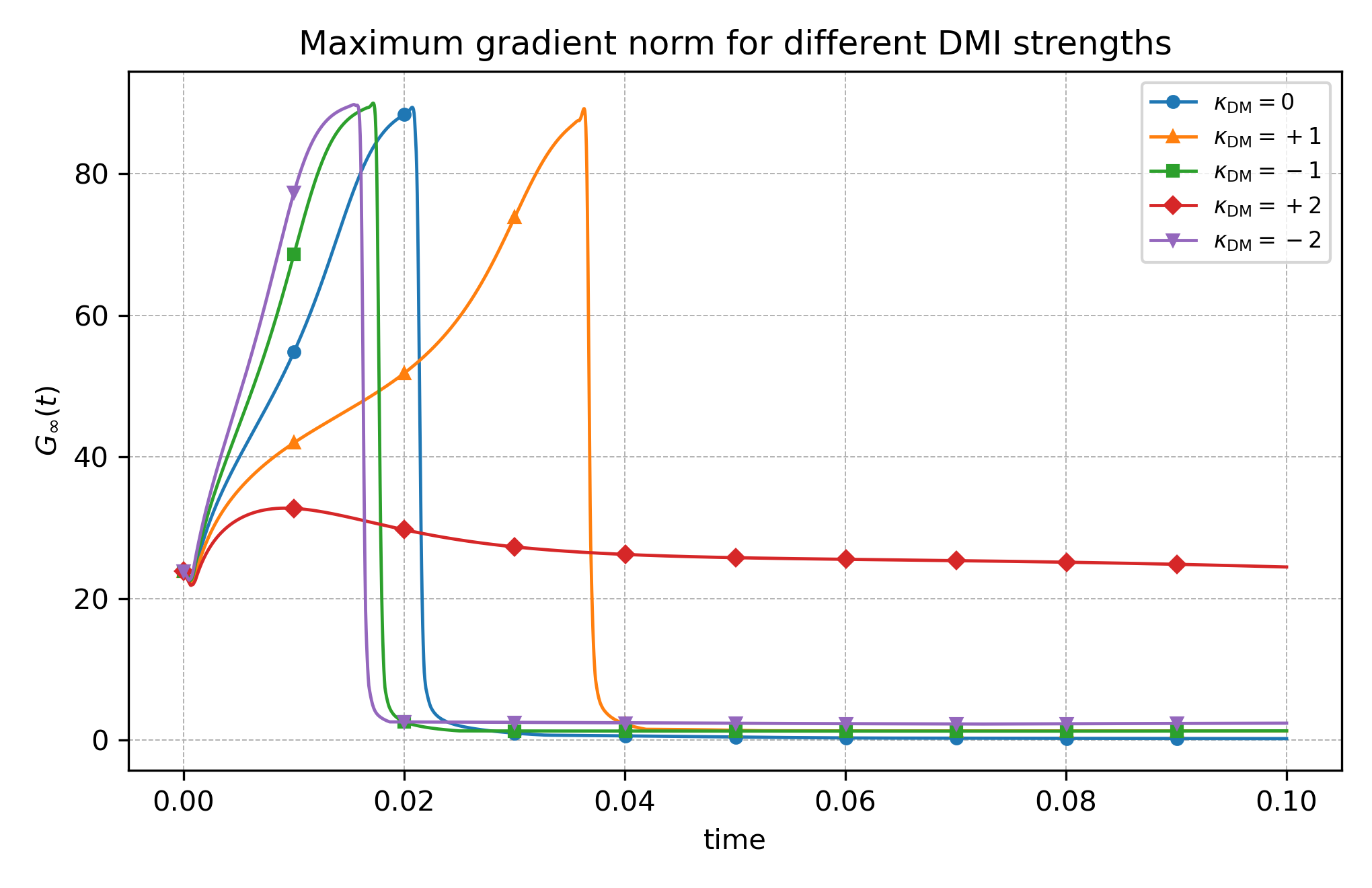}
	\caption{Maximum gradient norm $G_\infty^i$.}
\end{subfigure}
\begin{subfigure}{0.48\textwidth}
	\centering
	\includegraphics[width=\textwidth]{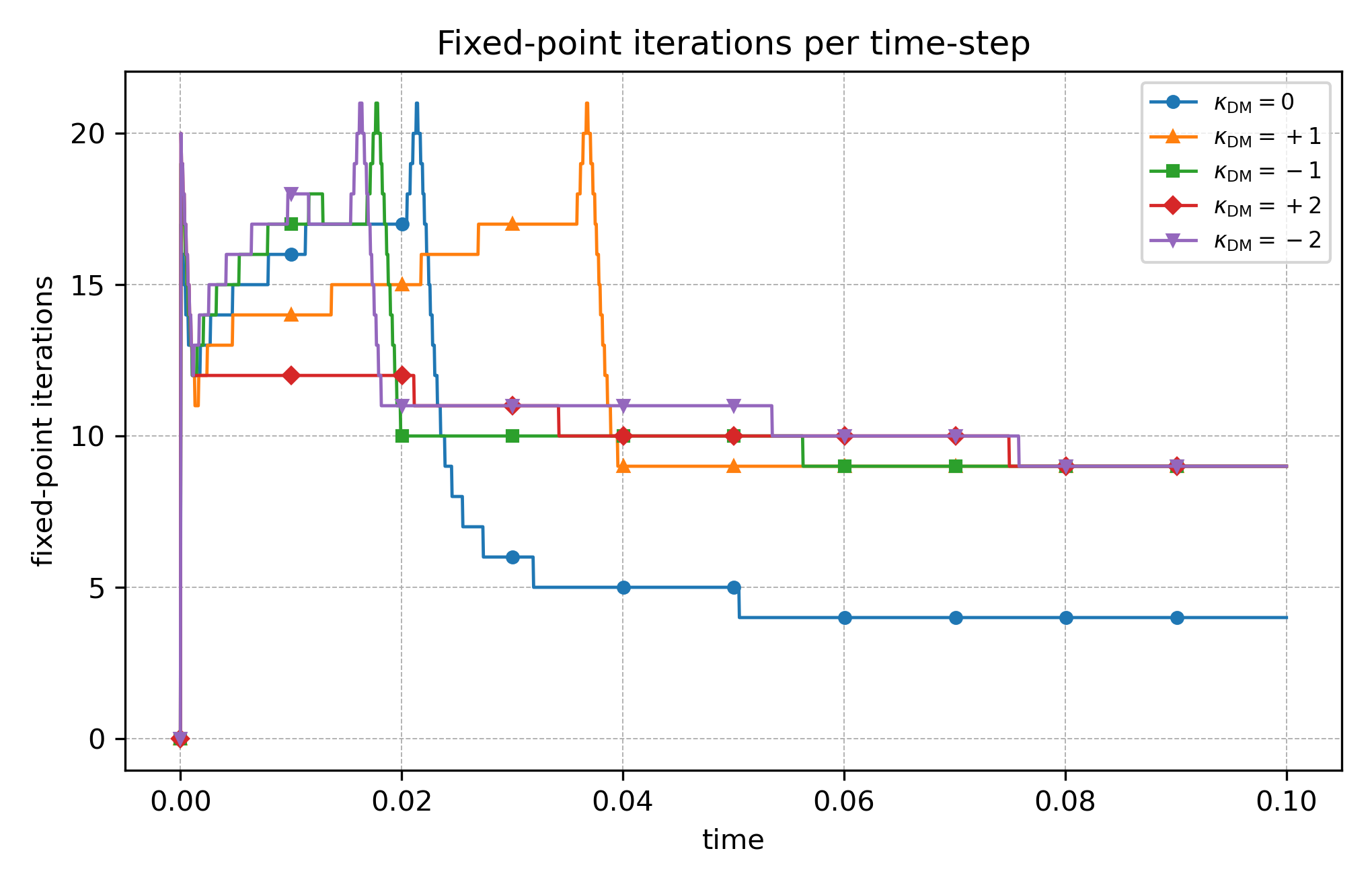}
	\caption{Number of fixed-point iterations.}
\end{subfigure}
	\caption{Diagnostic quantities for the bubbling experiment with $\kdm=0,\pm1,\pm2$. The legends use the value of the DMI coefficient $\kdm$.}
	\label{fig:bp-chiral-diagnostics}
\end{figure}

\section*{Statements and declarations}

\subsection*{Conflict of interest}
The author has no competing interests to declare that are relevant to the content of this article.

\subsection*{Funding statement}
This research is supported by the Commonwealth through an Australian Government Research Training Program Scholarship [DOI: \href{https://doi.org/10.82133/C42F-K220}{https://doi.org/10.82133/C42F-K220}].

\newcommand{\noopsort}[1]{}\def\cprime{$'$}
\def\soft#1{\leavevmode\setbox0=\hbox{h}\dimen7=\ht0\advance \dimen7
	by-1ex\relax\if t#1\relax\rlap{\raise.6\dimen7
		\hbox{\kern.3ex\char'47}}#1\relax\else\if T#1\relax
	\rlap{\raise.5\dimen7\hbox{\kern1.3ex\char'47}}#1\relax \else\if
	d#1\relax\rlap{\raise.5\dimen7\hbox{\kern.9ex \char'47}}#1\relax\else\if
	D#1\relax\rlap{\raise.5\dimen7 \hbox{\kern1.4ex\char'47}}#1\relax\else\if
	l#1\relax \rlap{\raise.5\dimen7\hbox{\kern.4ex\char'47}}#1\relax \else\if
	L#1\relax\rlap{\raise.5\dimen7\hbox{\kern.7ex
			\char'47}}#1\relax\else\message{accent \string\soft \space #1 not
		defined!}#1\relax\fi\fi\fi\fi\fi\fi}


\appendix

\section{Auxiliary estimates}\label{sec:appendix}

We prove several auxiliary estimates which are essential for the error analysis.

\begin{lemma}[Elliptic projection estimate]\label{lem:DMI-Ritz-H1}
	For every $\bff{v}\in\bb{H}^2$ satisfying \eqref{eq:chiral-boundary-condition}, the projection $R_h\bff{v}$ defined by \eqref{eq:def-DMI-Ritz-projection-field} satisfies
	\begin{equation}\label{eq:DMI-Ritz-H1-est}
		\norm{R_h\bff{v}-\bff{v}}{\bb{H}^1}
		\le
		Ch
		\left(
		\norm{\bff{v}}{\bb{H}^2}
		+
		\norm{\mathcal H_\lambda(\bff{v})}{\bb{L}^2}
		\right),
	\end{equation}
	where $\mathcal{H}_\lambda$ was defined in \eqref{eq:def-shifted-fields} and $C$ is a constant independent of $h$.
\end{lemma}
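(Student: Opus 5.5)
The argument is a Strang-type comparison with a conforming Galerkin projection. For $\bff v\in\bb H^2$ satisfying \eqref{eq:chiral-boundary-condition}, let $\widetilde R_h\bff v\in\bb V_h$ be the standard Galerkin projection of the continuous shifted problem:
\[
\mathfrak a(\widetilde R_h\bff v,\bff\chi_h)+\lambda\inpro{\widetilde R_h\bff v}{\bff\chi_h}
=
-\inpro{\mathcal H_\lambda(\bff v)}{\bff\chi_h},
\qquad\forall\bff\chi_h\in\bb V_h .
\]
By \eqref{eq:H-a-relation}, $\bff v$ satisfies the same identity for all $\bff\chi_h\in\bb H^1$. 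Hence Galerkin orthogonality holds. The continuous shifted form is coercive on $\bb H^1$ by \eqref{eq:garding-a} if $\lambda\ge C_0$, and this is compatible with the choice of $\lambda$ made for \eqref{eq:shifted-coercivity}. Céa's lemma together with \eqref{eq:Ih-H1-approx} then gives
\[
\norm{\bff v-\widetilde R_h\bff v}{\bb H^1}\le Ch\norm{\bff v}{\bb H^2}.
\]
As a by-product, $\norm{\widetilde R_h\bff v}{\bb H^1}\le C\norm{\bff v}{\bb H^2}$.

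It remains to bound $\bff w_h:=R_h\bff v-\widetilde R_h\bff v\in\bb V_h$. Subtracting the two defining relations, \eqref{eq:def-DMI-Ritz-projection-weak} and the one above, gives
\[
\mathfrak a_{h,\lambda}(\bff w_h,\bff\chi_h)
=
\lambda\Big[\inpro{\widetilde R_h\bff v}{\bff\chi_h}-\inpro{\widetilde R_h\bff v}{\bff\chi_h}_h\Big]
+\Big[\inpro{\mathcal H_\lambda(\bff v)}{\bff\chi_h}-\inpro{P_h\mathcal H_\lambda(\bff v)}{\bff\chi_h}_h\Big].
\]
In the second bracket, the definition \eqref{eq:L2-projection} gives $\inpro{\mathcal H_\lambda(\bff v)}{\bff\chi_h}=\inpro{P_h\mathcal H_\lambda(\bff v)}{\bff\chi_h}$. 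So both brackets are pure lumping defects between two $\bb V_h$ functions.

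The quadrature estimate \eqref{eq:lumped-quadrature-Hminus1}, applied with the $\bb H^1$ norm placed on $\bff\chi_h$, bounds the first bracket by
\[
Ch\norm{\widetilde R_h\bff v}{\bb L^2}\norm{\bff\chi_h}{\bb H^1}.
\]
It bounds the second bracket by
\[
Ch\norm{P_h\mathcal H_\lambda(\bff v)}{\bb L^2}\norm{\bff\chi_h}{\bb H^1}\le Ch\norm{\mathcal H_\lambda(\bff v)}{\bb L^2}\norm{\bff\chi_h}{\bb H^1},
\]
where the last step uses the $L^2$-stability of $P_h$ from \eqref{eq:Ph-Wsp-stability}.

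Now take $\bff\chi_h=\bff w_h$ and use the coercivity \eqref{eq:shifted-coercivity}. This gives
\[
\norm{\bff w_h}{\bb H^1}\le Ch\big(\norm{\bff v}{\bb H^2}+\norm{\mathcal H_\lambda(\bff v)}{\bb L^2}\big).
\]
Combining this with the Céa estimate through the triangle inequality proves \eqref{eq:DMI-Ritz-H1-est}.

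The main point needing care is that $\mathcal H_\lambda(\bff v)$ is only in $\bb L^2$. The lumping defect therefore has to be handled so that the $H^1$ norm falls on the test function and not on $P_h\mathcal H_\lambda(\bff v)$. This is exactly the asymmetric form of \eqref{eq:lumped-quadrature-Hminus1}, which puts only an $\bb L^2$ norm on one factor, so the plan works provided that estimate is applied in this orientation; the symmetry of the mass-lumped product permits swapping the arguments. A secondary check is that the chosen $\lambda$ also makes the continuous form $\mathfrak a+\lambda\inpro{\cdot}{\cdot}$ coercive on $\bb H^1$. If it does not, I would instead invoke Céa for Gårding forms (Schatz's argument), which needs the convexity of $\Omega$ for $H^2$-regularity of the adjoint problem.
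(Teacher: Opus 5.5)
Your argument is correct, but it uses a different intermediate comparison from the paper. The paper introduces no auxiliary Galerkin projection. It splits
$R_h\bff v-\bff v=(R_h\bff v-I_h\bff v)+(I_h\bff v-\bff v)$ and derives an equation for $\bff\theta_h=R_h\bff v-I_h\bff v$. The right-hand side of that equation contains two kinds of terms:
\begin{itemize}
\item the interpolation-error terms $-\mathfrak a(\bff\eta_h,\bff\chi_h)-\lambda\inpro{\bff\eta_h}{\bff\chi_h}$, with $\bff\eta_h=I_h\bff v-\bff v$, which are bounded by continuity of $\mathfrak a$ and \eqref{eq:Ih-H1-approx};
\item the same two lumping defects you obtain, except that $I_h\bff v$ stands where you have $\widetilde R_h\bff v$.
\end{itemize}
That route needs only coercivity of $\mathfrak a_{h,\lambda}$ on $\bb V_h$, continuity of $\mathfrak a$, and the interpolation estimate. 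It requires no auxiliary problem and no C\'ea argument.

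Your route first absorbs the consistency error into a conforming Galerkin projection, which is quasi-optimal in $\bb H^1$ by C\'ea. The remaining discrete piece $\bff w_h$ is then driven purely by mass-lumping defects between finite element functions. This separates the Galerkin approximation error from the quadrature perturbation more cleanly, and you apply \eqref{eq:lumped-quadrature-Hminus1} in exactly the right orientation. The cost is that you need the continuous shifted form to be coercive. That is harmless here: the paper fixes $\lambda\ge C_0$, and then the G{\aa}rding inequality \eqref{eq:garding-a} gives $\mathfrak a(\bff v,\bff v)+\lambda\norm{\bff v}{\bb L^2}^2\ge C_1\norm{\bff v}{\bb H^1}^2$ on all of $\bb H^1$. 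Your fallback to Schatz's argument, with its use of convexity, is therefore never needed.
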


\begin{proof}
	Set $\bff{\eta}_h:=I_h\bff{v}-\bff{v}$ and
	$\bff{\theta}_h:=R_h\bff{v}-I_h\bff{v}$.
	Then
	\begin{equation}\label{eq:Rh v minus v}
		R_h\bff{v}-\bff{v}
		=
		\bff{\theta}_h+\bff{\eta}_h.
	\end{equation}
	Subtracting $\mathfrak a_{h,\lambda}(I_h\bff{v},\bff{\chi}_h)$ from \eqref{eq:def-DMI-Ritz-projection-weak}, and using \eqref{eq:def-shifted-form}, gives
	\begin{align}
		\mathfrak a_{h,\lambda}(\bff{\theta}_h,\bff{\chi}_h)
		&=
		-\mathfrak a(\bff{\eta}_h,\bff{\chi}_h)
		-
		\lambda\inpro{\bff{\eta}_h}{\bff{\chi}_h}
		+
		\lambda
		\big(
		\inpro{I_h\bff{v}}{\bff{\chi}_h}
		-
		\inpro{I_h\bff{v}}{\bff{\chi}_h}_h
		\big)
		-
		\delta_h(\bff{v};\bff{\chi}_h),
		\label{eq:theta-equation-concise}
	\end{align}
	where $\delta_h$ represents the quadrature defect term:
	\[
	\delta_h(\bff{v};\bff{\chi}_h)
	:=
	\inpro{P_h\mathcal H_\lambda(\bff{v})}{\bff{\chi}_h}_h
	-
	\inpro{P_h\mathcal H_\lambda(\bff{v})}{\bff{\chi}_h}.
	\]
	By the interpolation estimate, the continuity of $\mathfrak a$, the quadrature estimate \eqref{eq:lumped-quadrature-Hminus1}, and the $\bb L^2$-stability of $P_h$, we obtain from \eqref{eq:theta-equation-concise},
	\begin{align}
		\left|
		\mathfrak a_{h,\lambda}(\bff{\theta}_h,\bff{\chi}_h)
		\right|
		&\le
		C\norm{\bff{\eta}_h}{\bb{H}^1}\norm{\bff{\chi}_h}{\bb{H}^1}
		+
		C\norm{\bff{\eta}_h}{\bb{L}^2}\norm{\bff{\chi}_h}{\bb{L}^2}
		\notag
		\\
		&\quad
		+
		Ch\norm{I_h\bff{v}}{\bb{L}^2}\norm{\bff{\chi}_h}{\bb{H}^1}
		+
		Ch\norm{P_h\mathcal H_\lambda(\bff{v})}{\bb{L}^2}
		\norm{\bff{\chi}_h}{\bb{H}^1}
		\notag
		\\
		&\le
		Ch
		\left(
		\norm{\bff{v}}{\bb{H}^2}
		+
		\norm{\mathcal H_\lambda(\bff{v})}{\bb{L}^2}
		\right)
		\norm{\bff{\chi}_h}{\bb{H}^1}.
		\label{eq:theta-bound-concise}
	\end{align}
	Choosing $\bff{\chi}_h=\bff{\theta}_h$ in \eqref{eq:theta-bound-concise} and using the coercivity of $\mathfrak a_{h,\lambda}$ in \eqref{eq:shifted-coercivity} gives
	\[
	\norm{\bff{\theta}_h}{\bb{H}^1}
	\le
	Ch
	\left(
	\norm{\bff{v}}{\bb{H}^2}
	+
	\norm{\mathcal H_\lambda(\bff{v})}{\bb{L}^2}
	\right).
	\]
	The estimate \eqref{eq:DMI-Ritz-H1-est} then follows by \eqref{eq:Rh v minus v}, \eqref{eq:Ih-H1-approx}, and the triangle inequality.
\end{proof}

\begin{lemma}[Smooth-data bounds on elliptic projection]\label{lem:smooth-W1p-bounds}
	Let $R_h$ be the elliptic projection defined by \eqref{eq:def-DMI-Ritz-projection-field}. Define
	\begin{equation}\label{eq:past}
		p^\ast
		:=
		\begin{cases}
			\infty, & d=1,2,\\
			6, & d=3.
		\end{cases}
	\end{equation}
	Then, for every $p\in[1,p^\ast]$ and every $\bff{v}\in \bb{W}^{1,p}\cap \bb{H}^2$ satisfying \eqref{eq:chiral-boundary-condition},
	\begin{equation}\label{eq:Rh-W1p-smooth-bound}
		\norm{R_h\bff{v}}{\bb W^{1,p}}
		\le
		C
		\left(
		\norm{\bff{v}}{\bb W^{1,p}}
		+
		\norm{\bff{v}}{\bb H^2}
		+
		\norm{\mathcal H_\lambda(\bff{v})}{\bb L^2}
		\right),
	\end{equation}
	where $C$ is a constant independent of $h$.
\end{lemma}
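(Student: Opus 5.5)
We compare $R_h\bff v$ with the nodal interpolant (or the $\bb L^2$-projection) and use an inverse estimate on the discrete difference. Write
\[
R_h\bff v = P_h\bff v + (R_h\bff v - P_h\bff v),
\]
where $P_h$ is the $\bb L^2$-orthogonal projection. By the $\bb W^{1,p}$-stability \eqref{eq:Ph-Wsp-stability}, $\norm{P_h\bff v}{\bb W^{1,p}}\le C\norm{\bff v}{\bb W^{1,p}}$. The remainder $\bff\theta_h:=R_h\bff v-P_h\bff v\in\bb V_h$ is then handled by the inverse estimate \eqref{eq:inverse-estimate} with $s=1$, $q=2$:
\[
\norm{\bff\theta_h}{\bb W^{1,p}}\le Ch^{-d(\frac12-\frac1p)}\norm{\bff\theta_h}{\bb H^1}.
\]

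Next we bound $\norm{\bff\theta_h}{\bb H^1}$. By the triangle inequality, Lemma~\ref{lem:DMI-Ritz-H1}, and \eqref{eq:Ph-W2p-approx} with $p=2$,
\[
\norm{\bff\theta_h}{\bb H^1}\le \norm{R_h\bff v-\bff v}{\bb H^1}+\norm{\bff v-P_h\bff v}{\bb H^1}\le Ch\left(\norm{\bff v}{\bb H^2}+\norm{\mathcal H_\lambda(\bff v)}{\bb L^2}\right).
\]
Hence
\[
\norm{\bff\theta_h}{\bb W^{1,p}}\le Ch^{1-d(\frac12-\frac1p)}\left(\norm{\bff v}{\bb H^2}+\norm{\mathcal H_\lambda(\bff v)}{\bb L^2}\right).
\]

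It remains to check that the exponent $1-d(\frac12-\frac1p)$ is nonnegative for $p\in[1,p^\ast]$, so that the power of $h$ is bounded by a constant for $h\le \operatorname{diam}\Omega$. For $p\le 2$ the exponent exceeds $1$; in that range one may alternatively use $\norm{\cdot}{\bb W^{1,p}}\le C\norm{\cdot}{\bb H^1}$ on the bounded domain, with no inverse estimate needed. For $p>2$:
\begin{itemize}
\item for $d=1$, $d(\frac12-\frac1p)\le \frac12$;
\item for $d=2$, $d(\frac12-\frac1p)<1$ for every $p<\infty$, and equals $1$ at $p=\infty$;
\item for $d=3$, $p\le 6$ gives exactly $3(\frac12-\frac16)=1$.
\end{itemize}
This is precisely why $p^\ast$ is defined as in \eqref{eq:past}. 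Combining the two pieces yields \eqref{eq:Rh-W1p-smooth-bound}.

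The only delicate point is the borderline cases ($d=3$, $p=6$ and $d=2$, $p=\infty$), where the exponent is exactly zero. There the argument relies on Lemma~\ref{lem:DMI-Ritz-H1} delivering the full factor $h$. It also relies on the inverse estimate being valid with $p=\infty$, which holds because the meshes are quasi-uniform. The $\bb W^{1,\infty}$-stability of $P_h$ at $p=\infty$ is likewise covered by the cited quasi-uniform result \eqref{eq:Ph-Wsp-stability}. If one prefers to avoid that stability result, one can use $I_h\bff v$ instead of $P_h\bff v$ for $p>d$, since then $\norm{I_h\bff v}{\bb W^{1,p}}\le C\norm{\bff v}{\bb W^{1,p}}$. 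For $p\le d$, however, $\bff v$ need not be continuous, so $P_h$ is the safer choice.
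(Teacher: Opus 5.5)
Your proof is correct and follows the paper's argument: the same splitting $R_h\bff{v}=P_h\bff{v}+(R_h\bff{v}-P_h\bff{v})$, the $\mathbb{W}^{1,p}$-stability of $P_h$, the $\mathbb{H}^1$ bound on the discrete remainder via Lemma~\ref{lem:DMI-Ritz-H1} and \eqref{eq:Ph-W2p-approx}, the embedding $\mathbb{H}^1\hookrightarrow\mathbb{W}^{1,p}$ for $p\le 2$, and the inverse estimate with the exponent check for $2<p\le p^\ast$. One wording point: for $p<2$ the inverse estimate \eqref{eq:inverse-estimate} does not apply, since it requires $q\le p$, so drop the remark that ``the exponent exceeds $1$'' and use only the embedding in that range, as the paper does.
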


\begin{proof}
	We split $R_h\bff{v}
	=
	P_h\bff{v}
	+
	(R_h\bff{v}-P_h\bff{v})$.
	The term $P_h \bff{v}$ can be bounded by \eqref{eq:Ph-Wsp-stability}.
	Moreover,
	\[
	R_h\bff{v}-P_h\bff{v}
	=
	(R_h\bff{v}-\bff{v})
	+
	(\bff{v}-P_h\bff{v}).
	\]
	Therefore, by Lemma~\ref{lem:DMI-Ritz-H1} and \eqref{eq:Ph-W2p-approx},
	\begin{equation}\label{eq:Rh-Ph-H1-bound}
		\norm{R_h\bff{v}-P_h\bff{v}}{\bb H^1}
		\le
		Ch
		\left(
		\norm{\bff{v}}{\bb H^2}
		+
		\norm{\mathcal H_\lambda(\bff{v})}{\bb L^2}
		\right).
	\end{equation}
	
	Now, if $1\le p\le2$, then $\bb H^1\hookrightarrow\bb W^{1,p}$ on bounded domains, and hence
	\begin{equation}\label{eq:Rh-Ph-W1p-p-less-2}
		\norm{R_h\bff{v}-P_h\bff{v}}{\bb W^{1,p}}
		\le
		C\norm{R_h\bff{v}-P_h\bff{v}}{\bb H^1}.
	\end{equation}
	Combining \eqref{eq:Rh-Ph-W1p-p-less-2} with \eqref{eq:Rh-Ph-H1-bound} gives the desired estimate for $p\in [1,2]$.
	
	If $2<p\le p^\ast$, the inverse estimate gives
	\begin{align*}
		\norm{R_h\bff{v}-P_h\bff{v}}{\bb W^{1,p}}
		&\le
		Ch^{-d\left(\frac12-\frac1p\right)}
		\norm{R_h\bff{v}-P_h\bff{v}}{\bb H^1}
		\notag
		\\
		&\le
		Ch^{1-d\left(\frac12-\frac1p\right)}
		\left(
		\norm{\bff{v}}{\bb H^2}
		+
		\norm{\mathcal H_\lambda(\bff{v})}{\bb L^2}
		\right).
	\end{align*}
	By the definition of $p^\ast$, we have $1-d\left(\frac12-\frac1p\right)\ge0$,
	and hence, for $h\le1$,
	\begin{equation*}
		\norm{R_h\bff{v}-P_h\bff{v}}{\bb W^{1,p}}
		\le
		C
		\left(
		\norm{\bff{v}}{\bb H^2}
		+
		\norm{\mathcal H_\lambda(\bff{v})}{\bb L^2}
		\right),
	\end{equation*}
	thus implying \eqref{eq:Rh-W1p-smooth-bound}.
	This completes the proof of the lemma.
\end{proof}

\begin{lemma}[Midpoint consistency and projected coefficient bound]\label{lem:midpoint-consistency}
	Let $p\in [1,p^\ast]$, where $p^\ast$ were defined in \eqref{eq:past}. Assume that the exact solution $\bff m$ satisfies the regularity \eqref{eq:exact-regularity-main} and the chiral boundary condition \eqref{eq:chiral-boundary-condition} for all $t\in[0,T]$.
	Then, for every $i=0,1,\ldots,N-1$,
	\begin{equation}\label{eq:midpoint-consistency}
		\norm{\overline{\bff m}^{i+\frac12}-\bff m^{i+\frac12}}{\bb H^3}
		+
		\norm{\dtt\bff m^{i+1}-\dot{\bff m}^{i+\frac12}}{\bb H^1}
		\le
		Ck^2.
	\end{equation}
	Moreover,
	\begin{equation}\label{eq:dt-Rhm-W1p-bound}
		\norm{\dtt R_h\bff m^{i+1}}{\bb W^{1,p}}
		\le
		C.
	\end{equation}
	Here, $C$ is a constant which depends on the regularity of the exact solution in \eqref{eq:exact-regularity-main}, but is independent of $h$ and $k$.
\end{lemma}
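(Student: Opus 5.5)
The statement has two parts, and I would treat them separately: the midpoint consistency estimate \eqref{eq:midpoint-consistency}, which is pure Taylor expansion in time, and the coefficient bound \eqref{eq:dt-Rhm-W1p-bound}, which reduces to Lemma~\ref{lem:smooth-W1p-bounds} through linearity of $R_h$.

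\emph{Consistency in time.} Write $t_* := t_{i+\frac12}$ and expand $\bff m$ about $t_*$ with integral remainder, viewing $\bff m$ as an $X$-valued function for a Banach space $X$.

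For the averaging error, take $X=\bb H^3$. We have
\[
\overline{\bff m}^{i+\frac12}-\bff m^{i+\frac12}
=\frac12\int_0^{k/2}\Big(\tfrac{k}{2}-s\Big)\big(\partial_{tt}\bff m(t_*+s)+\partial_{tt}\bff m(t_*-s)\big)\,\ds .
\]
The first-order terms cancel by symmetry. The integrand is bounded by $\norm{\partial_{tt}\bff m}{L^\infty(0,T;\bb H^3)}$, and the assumption $\bff m\in W^{2,\infty}(0,T;\bb H^3)$ supplies exactly this. This gives the $Ck^2$ bound in $\bb H^3$.

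For the difference quotient, take $X=\bb H^1$. Expanding $\bff m^{i+1}$ and $\bff m^i$ about $t_*$ to third order, the even-order terms cancel, so
\[
\dtt\bff m^{i+1}-\dot{\bff m}^{i+\frac12}
=\frac{1}{2k}\int_0^{k/2}\Big(\tfrac{k}{2}-s\Big)^2\big(\partial_{ttt}\bff m(t_*+s)+\partial_{ttt}\bff m(t_*-s)\big)\,\ds .
\]
This is bounded by $Ck^2\norm{\partial_{ttt}\bff m}{L^\infty(0,T;\bb H^1)}$, using $W^{3,\infty}(0,T;\bb H^1)$. Both remainder formulas are valid for Bochner-valued functions with the stated regularity.

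\emph{Projected coefficient bound.} Since $R_h$ is linear (it is defined through the linear operators $P_h$, $\mathcal H_\lambda$ and $\mathcal H_{h,\lambda}^{-1}$), we have
\[
\dtt R_h\bff m^{i+1}=R_h\bff w,
\qquad
\bff w:=\dtt\bff m^{i+1}=\frac1k\int_{t_i}^{t_{i+1}}\partial_t\bff m\,\mathrm{d}t .
\]
Here $R_h\bff w$ is well-defined because the chiral boundary condition is linear and homogeneous. It holds for $\bff m^i$ and $\bff m^{i+1}$, hence for their difference quotient. I would then apply Lemma~\ref{lem:smooth-W1p-bounds} to $\bff w$, which requires bounding three quantities uniformly in $k$.
\begin{itemize}
\item $\norm{\bff w}{\bb W^{1,p}}+\norm{\bff w}{\bb H^2}\le \norm{\partial_t\bff m}{L^\infty(0,T;\bb H^2\cap\bb W^{1,p})}$, from the averaging representation of $\bff w$.
\item $\norm{\mathcal H_\lambda(\bff w)}{\bb L^2}\le C\norm{\bff w}{\bb H^2}$, since $\mathcal H_\lambda$ is a second-order operator with constant coefficients.
\end{itemize}
The regularity $W^{1,\infty}(0,T;\bb H^2\cap\bb W^{1,p})$ provides both bounds. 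This yields \eqref{eq:dt-Rhm-W1p-bound} for $p\le p^\ast$.

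\emph{Main obstacle.} The only delicate point is legitimacy: Lemma~\ref{lem:smooth-W1p-bounds} and the definition of $R_h$ need $\bff w\in\bb H^2$ satisfying \eqref{eq:chiral-boundary-condition}. I would handle this by noting that $\partial_t\bff m(t)$ inherits the linear boundary condition from differentiating it in time, in the trace sense, since $\partial_t\bff m\in L^\infty(0,T;\bb H^2)$. Alternatively, linearity of $R_h$ gives $\dtt R_h\bff m^{i+1}=(R_h\bff m^{i+1}-R_h\bff m^i)/k$ directly, with each term admissible by Assumption~\ref{ass:exact-regularity}. Everything else is routine.
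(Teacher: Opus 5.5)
Your proposal is correct and follows the paper's proof step by step. The only cosmetic difference is in the estimate for $\dtt\bff m^{i+1}-\dot{\bff m}^{i+\frac12}$: the paper writes the difference quotient as the time average of $\partial_t\bff m$ and Taylor-expands $\partial_t\bff m$ to second order, which is equivalent to your third-order expansion of $\bff m$. The coefficient bound is obtained exactly as you describe, via linearity of $R_h$, inheritance of the linear chiral boundary condition by the difference quotient, and Lemma~\ref{lem:smooth-W1p-bounds}.
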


\begin{proof}
	Set $\tau:=k/2$. We first prove the midpoint consistency estimate \eqref{eq:midpoint-consistency}. By Taylor's formula with integral remainder, for $s\in[-\tau,\tau]$,
	\[
	\bff m(t_{i+\frac12}+s)
	=
	\bff m^{i+\frac12}
	+
	s\dot{\bff m}^{i+\frac12}
	+
	\int_0^s (s-r)\partial_{tt}\bff m(t_{i+\frac12}+r) \,\dr .
	\]
	Taking $s=\tau$ and $s=-\tau$, adding the two identities, and dividing by $2$, we obtain
	\[
	\overline{\bff m}^{i+\frac12}-\bff m^{i+\frac12}
	=
	\frac12
	\left[
	\int_0^\tau(\tau-r)\partial_{tt}\bff m \big(t_{i+\frac12}+r\big)\,\dr
	+
	\int_0^{-\tau}(-\tau-r)\partial_{tt}\bff m\big(t_{i+\frac12}+r\big)\,\dr
	\right].
	\]
	Hence, we infer that
	\[
	\norm{\overline{\bff m}^{i+\frac12}-\bff m^{i+\frac12}}{\bb H^3}
	\le
	Ck^2\norm{\partial_{tt}\bff m}{L^\infty(t_i,t_{i+1};\bb H^3)}.
	\]
	
	Next, observe that
	\[
	\dtt\bff m^{i+1}
	=
	\frac1k
	\int_{t_i}^{t_{i+1}}\partial_t\bff m(s)\,\ds
	=
	\frac1k
	\int_{-\tau}^{\tau}\partial_t\bff m(t_{i+\frac12}+s)\,\ds.
	\]
	Therefore, we have
	\[
	\dtt\bff m^{i+1}-\dot{\bff m}^{i+\frac12}
	=
	\frac1k
	\int_{-\tau}^{\tau}
	\left[
	\partial_t\bff m(t_{i+\frac12}+s)
	-
	\partial_t\bff m(t_{i+\frac12})
	\right] \ds .
	\]
	Using Taylor's formula for $\partial_t\bff m$ around $t_{i+\frac12}$, we obtain
	\[
	\partial_t\bff m(t_{i+\frac12}+s)
	-
	\partial_t\bff m(t_{i+\frac12})
	=
	s\partial_{tt}\bff m(t_{i+\frac12})
	+
	\int_0^s(s-r)\partial_{ttt}\bff m(t_{i+\frac12}+r)\,\dr .
	\]
	The first term integrates to zero over $[-\tau,\tau]$. Hence,
	\[
	\norm{\dtt\bff m^{i+1}-\dot{\bff m}^{i+\frac12}}{\bb H^1}
	\le
	Ck^2\norm{\partial_{ttt}\bff m}{L^\infty(t_i,t_{i+1};\bb H^1)}.
	\]
	This proves \eqref{eq:midpoint-consistency}.
	
	It remains to prove \eqref{eq:dt-Rhm-W1p-bound}. Since the elliptic projection $R_h$ is linear, $\dtt R_h\bff m^{i+1}= R_h(\dtt\bff m^{i+1})$.
	Moreover, because $\bff m(t)$ satisfies the chiral boundary condition for every $t$, the difference quotient $\dtt\bff m^{i+1}$ also satisfies the same boundary condition. Thus, Lemma~\ref{lem:smooth-W1p-bounds} gives
	\[
	\norm{\dtt R_h\bff m^{i+1}}{\bb W^{1,p}}
	\le
	C
	\left(
	\norm{\dtt\bff m^{i+1}}{\bb W^{1,p}}
	+
	\norm{\dtt\bff m^{i+1}}{\bb H^2}
	+
	\norm{\mathcal H_\lambda(\dtt\bff m^{i+1})}{\bb L^2}
	\right).
	\]
	By writing
	\[
	\dtt\bff m^{i+1}
	=
	\frac1k\int_{t_i}^{t_{i+1}}\partial_t\bff m(s)\,\ds,
	\]
	we infer that
	\[
	\norm{\dtt\bff m^{i+1}}{\bb W^{1,p}}
	+
	\norm{\dtt\bff m^{i+1}}{\bb H^2}
	\le
	C\norm{\partial_t\bff m}{L^\infty(t_i,t_{i+1};\bb W^{1,p}\cap\bb H^2)}.
	\]
	Furthermore, we have
	\[
	\norm{\mathcal H_\lambda(\dtt\bff m^{i+1})}{\bb L^2}
	\le
	C\norm{\dtt\bff m^{i+1}}{\bb H^2}
	\le
	C\norm{\partial_t\bff m}{L^\infty(t_i,t_{i+1};\bb H^2)}.
	\]
	Combining the preceding estimates proves \eqref{eq:dt-Rhm-W1p-bound}.
\end{proof}

\begin{lemma}[Discrete product estimates]\label{lem:discrete-product-estimates}
	Let $p>\max\{d,2\}$ and let $\bff{z}_h\in\bb V_h$. Then, for all $\bff{v}_h\in\bb V_h$,
	\begin{equation}\label{eq:Ih-product-H1}
		\norm{I_h(\bff{v}_h\times\bff{z}_h)}{\bb{H}^1}
		\le
		C\norm{\bff{z}_h}{\bb{W}^{1,p}}\norm{\bff{v}_h}{\bb{H}^1}.
	\end{equation}
	Furthermore, the bilinear form $\mathfrak a_{h,\lambda}$ defined in \eqref{eq:def-shifted-form} satisfies
	\begin{equation}\label{eq:a-product-cancellation}
		\left|
		\mathfrak a_{h,\lambda}
		\left(
		\bff{v}_h,
		I_h(\bff{v}_h\times\bff{z}_h)
		\right)
		\right|
		\le
		C\norm{\bff{z}_h}{\bb{W}^{1,p}}
		\norm{\bff{v}_h}{\bb{H}^1}^2.
	\end{equation}
	Here, $C$ is a constant independent of $h$ and $\lambda$.
\end{lemma}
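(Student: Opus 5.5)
The plan is to work nodal-elementwise. For \eqref{eq:Ih-product-H1}, the $\bb L^2$ part is immediate: on each element $K$ the nodal values of $\bff v_h\times\bff z_h$ are bounded by $\norm{\bff z_h}{\bb L^\infty(K)}|\bff v_h(z)|$, and $\bb W^{1,p}\hookrightarrow\bb L^\infty$ since $p>d$. For the gradient we use that $\bff w:=\bff v_h\times\bff z_h$ is elementwise quadratic with constant elementwise Hessian $\mathrm D^2\bff w|_K=2\,\mathrm{sym}(\nabla\bff v_h\times\nabla\bff z_h)$. We then apply the interpolation stability \eqref{eq:Ih-quasi-stability-H1}:
\[
\norm{I_h\bff w}{\bb H^1}\le C\norm{\bff w}{\bb H^1}+Ch\norm{\mathrm D_h^2\bff w}{\bb L^2}.
\]
The first term is bounded by $\norm{\bff v_h}{\bb H^1}\norm{\bff z_h}{\bb L^\infty}+\norm{\bff v_h}{\bb L^{2p/(p-2)}}\norm{\nabla\bff z_h}{\bb L^p}$. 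Because $p>\max\{d,2\}$, the exponent $2p/(p-2)$ is a Sobolev-admissible exponent for $\bb H^1$ (it is $<6$ when $d=3$ since $p>3$), so this term is controlled by $C\norm{\bff z_h}{\bb W^{1,p}}\norm{\bff v_h}{\bb H^1}$. For the Hessian term we estimate $h\norm{|\nabla\bff v_h||\nabla\bff z_h|}{\bb L^2}\le h\norm{\nabla\bff v_h}{\bb L^{2p/(p-2)}}\norm{\nabla\bff z_h}{\bb L^p}$. An inverse estimate \eqref{eq:inverse-estimate} then gives $h\norm{\nabla\bff v_h}{\bb L^{2p/(p-2)}}\le Ch^{1-d/p}\norm{\nabla\bff v_h}{\bb L^2}\le C\norm{\bff v_h}{\bb H^1}$, using $d/p<1$ and $h\le1$.

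For \eqref{eq:a-product-cancellation}, we split $\mathfrak a_{h,\lambda}$ into exchange, DMI, and lumped-mass parts. The DMI and mass parts only need $\bb L^2\times\bb H^1$ bounds. We bound $\kdm\inpro{\bff v_h}{\nabla\times I_h(\cdot)}+\kdm\inpro{\nabla\times\bff v_h}{I_h(\cdot)}$ by $C\norm{\bff v_h}{\bb H^1}\norm{I_h(\bff v_h\times\bff z_h)}{\bb H^1}$, and then apply \eqref{eq:Ih-product-H1}. The lumped term vanishes identically, since nodally $\bff v_h(z)\cdot(\bff v_h(z)\times\bff z_h(z))=0$; this is why the constant does not depend on $\lambda$.

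The main obstacle is the exchange term $\kex\inpro{\nabla\bff v_h}{\nabla I_h(\bff v_h\times\bff z_h)}$: a crude bound would need $\norm{\bff z_h}{\bb W^{1,p}}$ anyway, so in fact \eqref{eq:Ih-product-H1} with Cauchy--Schwarz already suffices. The cancellation is not needed for the stated bound, because the right-hand side carries $\norm{\bff v_h}{\bb H^1}^2$. So the plan is simply
\[
|\kex\inpro{\nabla\bff v_h}{\nabla I_h(\bff v_h\times\bff z_h)}|\le C\norm{\bff v_h}{\bb H^1}\norm{I_h(\bff v_h\times\bff z_h)}{\bb H^1},
\]
followed by \eqref{eq:Ih-product-H1}. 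The delicate point is therefore entirely in the Hessian part of the first estimate, where the inverse inequality must be balanced against the gain of $h$. That balance uses precisely $p>d$ (together with the chosen exponent pairing), and I would double-check the exponent bookkeeping there, including the case $d=3$ with $p\in(3,6]$.
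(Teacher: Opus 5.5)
Your proof is correct, but it takes a different route from the paper in both parts.

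\textbf{First estimate.} For \eqref{eq:Ih-product-H1} you invoke the global quasi-stability bound \eqref{eq:Ih-quasi-stability-H1}. You then absorb the Hessian term with the inverse estimate, which is where $p>d$ enters through $h^{1-d/p}$. The paper instead argues elementwise. On $\mathcal P_2(\widehat K;\bb R^3)$ the nodal interpolant is bounded in $\bb L^2$, and, by norm equivalence on the quotient modulo constants, also in the $\bb H^1$-seminorm. Scaling then gives $\norm{\nabla I_h\bff q_h}{\bb L^2(K)}\le C\norm{\nabla\bff q_h}{\bb L^2(K)}$ for $\bff q_h=\bff v_h\times\bff z_h$. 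The product rule and H\"older finish the proof with no Hessian term and no inverse inequality. Only shape regularity is needed there, whereas your global inverse step uses quasi-uniformity; the paper assumes this anyway.

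\textbf{Second estimate.} For \eqref{eq:a-product-cancellation} your key observation is right. Once the lumped term is seen to vanish nodally, continuity of $\mathfrak a$ combined with \eqref{eq:Ih-product-H1} gives the claim at once, because $\norm{\bff z_h}{\bb W^{1,p}}$ already controls $\norm{\bff z_h}{\bb L^\infty}$. The paper takes a longer path:
\begin{itemize}
\item it splits $I_h\bff q_h=\bff q_h+\bff r_h$;
\item it uses the cancellation $\inpro{\partial_\ell\bff v_h}{\partial_\ell\bff v_h\times\bff z_h}=0$ in the exchange part of $\mathfrak a(\bff v_h,\bff q_h)$;
\item it bounds $\mathfrak a(\bff v_h,\bff r_h)$ by a local interpolation estimate plus an elementwise inverse estimate, which is essentially your Hessian computation.
\end{itemize}
What that route buys is structural: the exchange contribution depends on $\bff z_h$ only through $\norm{\nabla\bff z_h}{\bb L^p}$, and it vanishes for constant $\bff z_h$. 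That refinement is never needed, however. The lemma is stated with the full $\bb W^{1,p}$-norm, and it is only applied with $\bff z_h=\dtt R_h\bff m^{i+1}$ and $\bff z_h=P_h\mathcal H_\lambda(\overline{\bff m}^{i+\frac12})$, which are bounded in $\bb W^{1,p}$ by \eqref{eq:coefficient-W1p-bounds}. So your shorter argument, which shows the ``cancellation'' is dispensable for the stated bound, is fully adequate.
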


\begin{proof}
	Let $q\in[2,\infty)$ be chosen such that $\frac{1}{2}=\frac1p+\frac1q$.
	Since $p>d$, the Sobolev embeddings $\bb W^{1,p}\hookrightarrow \bb L^\infty$ and $\bb H^1\hookrightarrow \bb L^q$ hold.
	
	We first prove \eqref{eq:Ih-product-H1}. On each element $K\in\mathcal T_h$, set $\bff q_h:=\bff v_h\times\bff z_h$. 
	Since $\bff v_h$ and $\bff z_h$ are affine on $K$, we have $\bff q_h|_K\in\mathcal P_2(K;\bb R^3)$. Let $\widehat K$ be the reference simplex.
	The restriction of $I_h$ to the finite-dimensional
	space $\mathcal P_2(K;\bb R^3)$ is uniformly stable after scaling to the
	reference element. More precisely, by affine equivalence and norm equivalence
	on $\mathcal P_2(\widehat K;\bb R^3)$, shape-regularity implies
	\begin{align}
		\norm{I_h\bff q_h}{\bb L^2(K)}
		&\le
		C\norm{\bff q_h}{\bb L^2(K)},
		\label{eq:local-product-L2}
		\\
		\norm{\nabla I_h\bff q_h}{\bb L^2(K)}
		&\le
		C\norm{\nabla\bff q_h}{\bb L^2(K)}.
		\label{eq:local-product-H1}
	\end{align}
	Indeed, \eqref{eq:local-product-L2} follows from the boundedness of the linear map
	$I_{\widehat K}:\mathcal P_2(\widehat K;\bb R^3)\to\mathcal P_1(\widehat K;\bb R^3)$
	in $\bb{L}^2(\widehat K)$ and scaling. 
	For \eqref{eq:local-product-H1}, observe
	that the map $\bff q\mapsto \norm{\nabla I_{\widehat K}\bff q}{\bb L^2(\widehat K)}$
	vanishes on constants. Hence, it defines a seminorm on the quotient space
	$\mathcal P_2(\widehat K;\bb R^3)/\bb R^3$. On this quotient, $\bff q\mapsto \norm{\nabla\bff q}{\bb L^2(\widehat K)}$
	is a norm. Since the quotient space is finite-dimensional, there exists
	$C>0$ such that
	\[
	\norm{\nabla I_{\widehat K}\bff q}{\bb L^2(\widehat K)}
	\le
	C\norm{\nabla\bff q}{\bb L^2(\widehat K)},
	\qquad
	\forall \bff q\in\mathcal P_2(\widehat K;\bb R^3).
	\]
	Scaling back to $K$ gives \eqref{eq:local-product-H1}, with constants depending
	only on the shape-regularity of the mesh.
	
	Using \eqref{eq:local-product-L2}--\eqref{eq:local-product-H1}, the product rule elementwise, and H\"older's inequality, we obtain
	\begin{align*}
		\norm{I_h(\bff v_h\times\bff z_h)}{\bb H^1(K)}
		&\le
		C
		\left(
		\norm{\bff z_h}{\bb L^\infty(K)}
		\norm{\bff v_h}{\bb H^1(K)}
		+
		\norm{\nabla\bff z_h}{\bb L^p(K)}
		\norm{\bff v_h}{\bb L^q(K)}
		\right).
	\end{align*}
	Summing over $K\in\mathcal T_h$ and using the embedding $\bb{W}^{1,p}\hookrightarrow \bb{L}^\infty$ proves \eqref{eq:Ih-product-H1}.
	
	Next, we prove \eqref{eq:a-product-cancellation}. Since $I_h(\bff v_h\times\bff z_h)(z)
	=
	\bff v_h(z)\times\bff z_h(z)$ for all $z\in\mathcal N_h$, the shifted mass-lumped contribution vanishes exactly:
	\begin{align*}
		\lambda\inpro{\bff v_h}{I_h(\bff v_h\times\bff z_h)}_h
		&=
		\lambda
		\sum_{z\in\mathcal N_h}
		\beta_z\,
		\bff v_h(z)\cdot
		\bigl(\bff v_h(z)\times\bff z_h(z)\bigr)
		=
		0.
	\end{align*}
	Therefore,
	\[
	\mathfrak a_{h,\lambda}
	\left(
	\bff v_h,
	I_h(\bff v_h\times\bff z_h)
	\right)
	=
	\mathfrak a
	\left(
	\bff v_h,
	I_h(\bff v_h\times\bff z_h)
	\right),
	\]
	and all constants in subsequent argument are independent of $\lambda$.
	Now, set $\bff q_h:=\bff v_h\times\bff z_h$ and $\bff r_h:=I_h\bff q_h-\bff q_h$.
	Then
	\[
	\mathfrak a(\bff v_h,I_h\bff q_h)
	=
	\mathfrak a(\bff v_h,\bff q_h)
	+
	\mathfrak a(\bff v_h,\bff r_h).
	\]
	
	We estimate the non-interpolated term first. By the product rule, we have
	\begin{align*}
		\inpro{\nabla\bff v_h}{\nabla\bff q_h}
		&=
		\sum_{\ell=1}^d
		\inpro{\partial_\ell\bff v_h}
		{\partial_\ell\bff v_h\times\bff z_h}
		+
		\sum_{\ell=1}^d
		\inpro{\partial_\ell\bff v_h}
		{\bff v_h\times\partial_\ell\bff z_h}
		=
		\sum_{\ell=1}^d
		\inpro{\partial_\ell\bff v_h}
		{\bff v_h\times\partial_\ell\bff z_h},
	\end{align*}
	Hence, by H\"older's inequality and the embedding $\bb H^1\hookrightarrow\bb L^q$,
	\[
	\left|
	\inpro{\nabla\bff v_h}{\nabla\bff q_h}
	\right|
	\le
	C\norm{\nabla\bff v_h}{\bb L^2}
	\norm{\bff v_h}{\bb L^q}
	\norm{\nabla\bff z_h}{\bb L^p}
	\le
	C\norm{\bff z_h}{\bb W^{1,p}}
	\norm{\bff v_h}{\bb H^1}^2.
	\]
	The DMI terms are first order and are bounded in the same way:
	\begin{align*}
		\left|
		\inpro{\bff v_h}{\nabla\times\bff q_h}
		\right|
		+
		\left|
		\inpro{\nabla\times\bff v_h}{\bff q_h}
		\right|
		&\le
		C\norm{\bff z_h}{\bb W^{1,p}}
		\norm{\bff v_h}{\bb H^1}^2.
	\end{align*}
	Consequently,
	\begin{equation}\label{eq:a-v-q-bound}
		\left|
		\mathfrak a(\bff v_h,\bff q_h)
		\right|
		\le
		C\norm{\bff z_h}{\bb W^{1,p}}
		\norm{\bff v_h}{\bb H^1}^2.
	\end{equation}
	
	It remains to control the interpolation defect $\bff r_h$. Since
	$\bff q_h|_K\in\mathcal P_2(K;\bb R^3)$, the standard local interpolation
	estimate for the nodal $P_1$ interpolant gives
	\[
	\norm{\bff r_h}{\bb L^2(K)}
	+
	h_K\norm{\nabla\bff r_h}{\bb L^2(K)}
	\le
	Ch_K^2\norm{\mathrm D_h^2\bff q_h}{\bb L^2(K)}.
	\]
	Since $\bff v_h$ and $\bff z_h$ are affine on $K$, the elementwise
	second derivatives of $\bff q_h=\bff v_h\times\bff z_h$ consist only of
	products of first derivatives. Thus
	\[
	\norm{\mathrm D_h^2\bff q_h}{\bb L^2(K)}
	\le
	C\norm{\nabla\bff v_h}{\bb L^q(K)}
	\norm{\nabla\bff z_h}{\bb L^p(K)}.
	\]
	Furthermore, by the inverse estimate \eqref{eq:inverse-estimate}, we have
	\begin{align*}
		\norm{\nabla\bff r_h}{\bb L^2(K)}
		&\le
		Ch_K\norm{\mathrm D_h^2\bff q_h}{\bb L^2(K)}                                      
		\\
		&\le
		Ch_K\norm{\nabla\bff v_h}{\bb L^q(K)}
		\norm{\nabla\bff z_h}{\bb L^p(K)}                                           
		\\
		&\le
		Ch_K^{1-d/p}
		\norm{\nabla\bff v_h}{\bb L^2(K)}
		\norm{\nabla\bff z_h}{\bb L^p(K)} .
	\end{align*}
	Since $p>d$, the factor $h_K^{1-d/p}$ is uniformly bounded. Hence, after summing over $K$,
	\[
	\norm{\nabla\bff r_h}{\bb L^2}
	\le
	C\norm{\nabla\bff z_h}{\bb L^p}
	\norm{\nabla\bff v_h}{\bb L^2}.
	\]
	The corresponding $\bb{L}^2$ estimate follows in the same way, with the factor
	$h_K^{2-d/p}$.
	Therefore, by the continuity of $\mathfrak a$, we obtain
	\begin{align}\label{eq:a-v-r-bound}
		\left|
		\mathfrak a(\bff v_h,\bff r_h)
		\right|
		&\le
		C\norm{\bff v_h}{\bb H^1}\norm{\bff r_h}{\bb H^1}
		\le
		C\norm{\bff z_h}{\bb W^{1,p}}
		\norm{\bff v_h}{\bb H^1}^2.
	\end{align}
	Combining \eqref{eq:a-v-q-bound} and
	\eqref{eq:a-v-r-bound} proves \eqref{eq:a-product-cancellation}.
\end{proof}

\end{document}